\newcounter{abb}
\newcounter{tab}
\providecommand\@dotsep{5}
\def\listtodoname{List of Todos}
\def\listoftodos{\@starttoc{tdo}\listtodoname}
\newcommand{\abs}[1]{\left\lvert#1\right\rvert}
\newcommand{\dd}{\mathrm{d}}
\newcommand{\R}{\mathbb{R}}
\newcommand{\CC}{\mathbb{C}}
\newcommand{\eps}{\varepsilon}
\newcommand{\myicon}{$\,\,\,\triangleright$}
\newcommand{\supp}{\mathrm{supp}} 
\newcommand{\Cl}{\mathrm{Cl}}
\newcommand{\tensor}{\otimes}
\newcommand{\vol}{\mathrm{vol}}
\newcommand{\im}{\mathrm{im}\,} 
\newcommand{\Sym}{\mathop\mathrm{Sym}}
\newcommand{\Lip}{\textnormal{Lip}}
\newcommand{\Liploc}{\textnormal{Lip}_{\rm{loc}}}
\newcommand{\Wloc}{W_{\rm{loc}}} 
\newcommand{\Lloc}{L_{\rm{loc}}}
\newcommand{\norm}[1]{\left\|#1\right\|}
\newcommand{\defeq}{\mathrel{\vcentcolon=}}
\DeclareMathOperator{\End}{End}
\DeclareMathOperator{\dom}{dom}
\DeclareMathOperator{\rank}{rank}
\DeclareMathOperator{\ind}{index}
\renewcommand{\rm}{\mathrm}
\DeclareMathOperator{\scal}{scal}
\DeclareMathOperator{\id}{\mathrm{id}}
\DeclareMathOperator{\ch}{ch}
\DeclareMathOperator{\Dirac}{\mathsf{D}} 
\DeclareMathOperator{\Mr}{M_{\rm reg}} 
\DeclareMathOperator{\ZZ}{\mathbb{Z}}
\DeclareMathOperator{\cl}{\textnormal{Cl}}
\DeclareMathOperator{\Ker}{Ker}
\DeclareMathOperator{\Prin}{P}
\newcommand{\SO}{\mathrm{SO}}
\newcommand{\Spin}{\mathrm{Spin}}
\def\S{\mathord{\mathbb S}}
\def\D{\mathord{\mathbb D}}
\DeclareFontFamily{OMX}{MnSymbolE}{}
\DeclareSymbolFont{MnLargeSymbols}{OMX}{MnSymbolE}{m}{n}
\DeclareFontShape{OMX}{MnSymbolE}{m}{n}{
    <-6>  MnSymbolE5
   <6-7>  MnSymbolE6
   <7-8>  MnSymbolE7
   <8-9>  MnSymbolE8
   <9-10> MnSymbolE9
  <10-12> MnSymbolE10
  <12->   MnSymbolE12
}{}
\DeclareFontShape{OMX}{MnSymbolE}{b}{n}{
    <-6>  MnSymbolE-Bold5
   <6-7>  MnSymbolE-Bold6
   <7-8>  MnSymbolE-Bold7
   <8-9>  MnSymbolE-Bold8
   <9-10> MnSymbolE-Bold9
  <10-12> MnSymbolE-Bold10
  <12->   MnSymbolE-Bold12
}{}
\let\llangle\@undefined
\let\rrangle\@undefined
\DeclareMathDelimiter{\llangle}{\mathopen}%
                     {MnLargeSymbols}{'164}{MnLargeSymbols}{'164}
\DeclareMathDelimiter{\rrangle}{\mathclose}%
                     {MnLargeSymbols}{'171}{MnLargeSymbols}{'171}
\newtheorem{thm}{Theorem}[section]
\newtheorem{lem}[thm]{Lemma}
\newtheorem{prop}[thm]{Proposition}
\theoremstyle{definition}
\newtheorem{rem}[thm]{Remark}
\newtheorem{dfn}[thm]{Definition} 
\newtheorem{exa}[thm]{Example} 
\newtheorem{remin}[thm]{Reminder} 
\newtheorem{notation}[thm]{Notation}
\newtheorem{thmx}{Theorem}
\begin{document} 

\title{Lipschitz rigidity for scalar curvature}

\author{Simone Cecchini}
\address{Mathematisches Institut,
Georg-August-Universit\"at, 
G{\"o}ttingen,
Germany}
\email{simone.cecchini@mathematik.uni-goettingen.de}

\author{Bernhard Hanke}
\address{Institut f\"ur Mathematik,
Universit\"at Augsburg, 
Augsburg,
Germany}
\email{bernhard.hanke@math.uni-augsburg.de}

\author{Thomas Schick}
\address{Mathematisches Institut,
Georg-August-Universit\"at, 
G{\"o}ttingen,
Germany}
\email{thomas.schick@math.uni-goettingen.de}

\begin{abstract}
Let $M$ be a closed smooth connected spin manifold of even dimension $n$, let $g$ be a Riemannian metric of regularity $W^{1,p}$, $p > n$, on $M$ whose distributional scalar curvature in the sense of Lee-LeFloch is bounded below by $n(n-1)$, and let  $f \colon (M,g) \to \S^n$ be a $1$-Lipschitz continuous (not necessarily smooth) map of non-zero degree to the unit  $n$-sphere. 
Then $f$ is a metric isometry. 
This generalizes a result of Llarull (1998) and answers in the affirmative a question of Gromov (2019) in his \emph{Four lectures}. 

Our proof is based on spectral properties of Dirac operators for  low regularity Riemannian metrics and twisted with Lipschitz bundles. 
We argue that the existence of a non-zero harmonic spinor field forces $f$  to be quasiregular in the sense of Reshetnyak, and in this way connect the powerful theory for quasiregular maps to the Atiyah-Singer index theorem.

\end{abstract}

\keywords{Lower scalar curvature bounds, Lipschitz maps, twisted Dirac operators, low regularity metrics, scalar curvature distribution, quasiregular maps}

\subjclass[2000]{Primary: 51F30, 53C23, 53C24 ;  Secondary: 30C65, 53C27, 58J20 }

\maketitle


\section{Introduction}

Extremality and rigidity properties of Riemannian manifolds with lower scalar curvature bounds have played a major role in differential geometry for many years. 
We refer to  Gromov's {\em Four lectures on scalar curvature} \cite{Gromov4} for a comprehensive overview of the subject.
The following fundamental result of Llarull  illustrates the striking interplay of metric, curvature and homological information in this context. 
Throughout this paper we denote by $\S^n$ the unit $n$-sphere equipped with its standard Riemannian metric and induced Riemannian distance.

\begin{thm}[{\cite[Theorem B]{Lla98}}] \label{Llarull} 
Let  $M$ be a closed  smooth connected oriented  manifold of dimension $n \geq 2$ which admits a spin structure and is  equipped with a smooth Riemannian metric $g$ of scalar curvature $\scal_g \geq n(n-1)$. 
Furthermore, let  $f\colon (M,d_g)\to  \S^n$ be a smooth $1$-Lipschitz map of non-zero degree. 

Then $f$ is a Riemannian isometry. 
\end{thm}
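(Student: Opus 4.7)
The plan is to apply the Lichnerowicz–Schrödinger–Weitzenböck formula to a twisted Dirac operator on $M$ and combine the resulting pointwise curvature estimate with the Atiyah–Singer index theorem. I focus on the case of even $n$, since the odd case reduces to it by crossing $M$ with $\S^1$ and composing with the standard map $\S^n \times \S^1 \to \S^{n+1}$.

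Let $\spinor M$ denote the complex spinor bundle of $(M,g)$, let $E := f^*(\spinor\S^n)$ be the pullback of the sphere's spinor bundle equipped with the pullback Hermitian connection, and form the twisted Dirac operator $\Dirac_E$ on $\spinor M \otimes E$. The Weitzenböck identity reads
\[
\Dirac_E^2 \;=\; \nabla^*\nabla \;+\; \tfrac{1}{4}\scal_g \;+\; \RR^E,
\]
where $\RR^E$ is the Clifford contraction of the curvature $F^E = f^*R^{\spinor\S^n}$. The pivotal pointwise estimate, which I would obtain by diagonalising $df_x$ via a singular-value decomposition and using that the spin curvature of the round sphere acts as half the Clifford action by the corresponding two-form, takes the form
\[
\tfrac{1}{4}\scal_g(x) \;+\; \RR^E_x \;\geq\; \tfrac{1}{4}\bigl(\scal_g(x) - \textstyle\sum_{i\neq j}\lambda_i(x)\lambda_j(x)\bigr),
\]
where $\lambda_1(x),\dots,\lambda_n(x) \in [0,1]$ are the singular values of $df_x$. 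The $1$-Lipschitz bound $\lambda_i\leq 1$ and the hypothesis $\scal_g\geq n(n-1)$ together give $\Dirac_E^2 \geq \nabla^*\nabla \geq 0$.

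Next I would invoke the Atiyah–Singer index theorem for the chiral part $\Dirac_E^+$: a short computation of the twisted Chern character identifies $\Ind\,\Dirac_E^+$, up to a nonzero universal constant depending only on $n$, with the degree of $f$. Hence $\Ind\,\Dirac_E^+ \neq 0$, and there exists a nontrivial harmonic twisted spinor $\psi \in \ker \Dirac_E$. Integrating the Weitzenböck identity against $\psi$ forces both $\nabla\psi \equiv 0$ and $\bigl(\tfrac{1}{4}\scal_g + \RR^E\bigr)\psi \equiv 0$ pointwise, so in particular $|\psi|$ is constant and nowhere vanishing.

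Finally, at any point $x \in M$ the equation $(\tfrac{1}{4}\scal_g + \RR^E)\psi = 0$ together with $\psi(x) \neq 0$ saturates the pointwise estimate, which forces $\scal_g(x) = n(n-1)$ and $\lambda_1(x) = \cdots = \lambda_n(x) = 1$. Thus $f$ is a local Riemannian isometry everywhere; since $M$ is closed and connected and $\S^n$ is simply connected, such an $f$ is a Riemannian covering, hence a diffeomorphism, and therefore a Riemannian isometry. The main obstacle I anticipate is the pointwise curvature estimate for $\RR^E$: this is the algebraic heart of Llarull's argument and requires a careful Clifford-algebraic manipulation using the explicit form of the round sphere's spin curvature, together with a sharp analysis of the equality case via the singular-value decomposition of $df$.
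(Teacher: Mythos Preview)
Your proposal is correct and is precisely Llarull's original argument. The paper does not give an independent proof of this cited theorem; rather, it proves the generalization Theorem~\ref{T:main_even} for admissible (low-regularity) metrics and merely Lipschitz comparison maps, and Llarull's theorem is recovered as the smooth special case via Remark~\ref{weiter}\eqref{regular}.

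The analytic core is identical to yours: twist the spin Dirac operator by $E=f^*\Sigma$, compute $\ind(\Dirac_E^+)=2\deg f\neq 0$ (equation~\eqref{indexnotzero}), obtain a nonzero harmonic spinor $\psi$, and combine the Schr\"odinger--Lichnerowicz formula with the singular-value estimate (Proposition~\ref{wonderfulestimates}) to force $\scal_g\equiv n(n-1)$ and $d_xf$ isometric wherever $\psi(x)\neq 0$. The difference lies in the endgame. In your smooth setting, ``$d_xf$ is an isometry for every $x$'' immediately makes $f$ a local diffeomorphism, hence a Riemannian covering of the simply connected $\S^n$, and you are done. In the paper's low-regularity setting this shortcut is unavailable: one only knows $d_xf$ is an isometry \emph{almost everywhere}, so the paper must carry out an additional Step~2 (showing $df$ is orientation preserving a.e.\ via a Clifford-algebraic orthogonality argument, Proposition~\ref{strange}) and then invoke the theory of quasiregular maps (Theorem~\ref{essrigid}) to upgrade ``a.e.\ isometry'' to a genuine metric isometry. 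Your covering-map conclusion is exactly the step that breaks down, and whose replacement constitutes the main novelty of the paper.
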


Here $d_g$ denotes the Riemannian distance on $M$ induced by $g$. 
For $M$ equal to the unit $n$-sphere and $f = \id_M$, Theorem \ref{Llarull}  says that the round metric on $\S^n$ cannot be dominated by a  smooth Riemannian metric $g$ with  $\scal_g  \geq n(n-1)$, except by the round metric itself, see Llarull \cite[Theorem A]{Lla98}.

The proof of Theorem \ref{Llarull} is based on spectral properties of twisted Dirac operators and emphasizes  the fruitful interplay of spin and scalar curvature geometry. 
Indeed, it is unknown whether the spin condition is dispensable in  any dimension.

Goette and Semmelmann proved in \cite[Theorem 2.4]{GS02} and \cite[Theorem 0.1]{GS01} generalisations of Theorem \ref{Llarull} where $\S^n$ is  replaced by Riemannian manifolds with non-negative curvature operators and non-vanishing Euler characteristics, and  by closed K\"ahler manifolds of positive Ricci curvature, respectively. 
Lott \cite[Theorem 1.1]{Lott} proved similar extremality and rigidity results for smooth Riemannian manifolds with boundary. 
 
In the present work we will generalise Theorem \ref{Llarull} to Riemannian metrics $g$ and comparison maps $f$ with regularity less than $C^1$, thus highlighting the metric content of Llarull's theorem. 
This is close in spirit to other recent results, such as the conservation of lower scalar curvature bounds under $C^0$-convergence of smooth Riemannian metrics by Gromov \cite{Gro14} and Bamler \cite{Bam16},  the definition of lower scalar curvature bounds for $C^0$-Riemannian metrics via regularising Ricci flows by Burkhardt-Guim \cite{BG19},  and the positive mass theorem under low regularity assumptions by Lee-LeFloch \cite{LL15}.

\begin{remin} \label{reminder} Let $M$ be a connected smooth manifold and let $g$ be a continuous Riemannian metric on $M$, that is, $g \in C^0(M, T^*M \otimes T^*M)$ such that $g_x$ is a scalar product on $T_x M$ for $x \in M$. 
Given an absolutely continuous curve  $\gamma \colon [0,1] \to M$, the {\em length} of $\gamma$ is then defined by the formula 
 \[
   \ell( \gamma) :=   \int_{0}^1 | \gamma'(t) |_g  \, dt , 
\]
using that $|\gamma'|_g \in L^1([0,1] , \R)$, compare \cite[Prop. 3.7]{Burt15}. 

Thus $g$ induces a path metric $d_g \colon M \times M \to \R_{\geq 0}$, called the {\em Riemannian distance}, 
\[
   d_g(x,y) =  \inf \{ \ell ( \gamma) \mid \gamma \colon [0,1] \to M \text{ absolutely continuous}, \gamma(0)  = x, \gamma(1) = y\} . 
\]
The metric $d_g$ induces the manifold topology on $M$.
One obtains the same  distance function $d_g$ if the  infimum  is taken only over all regular smooth curves $\gamma \colon [0,1] \to M$ from $x$ to $y$. 

For more information about metric properties of smooth manifolds equipped with continuous Riemannian metrics, see Burtscher \cite[Section 4]{Burt15}. 
\end{remin}

\begin{dfn} \label{admissible} Let $M$ be a smooth manifold. 
A  continuous Riemannian metric $g$ on $M$ is called {\em admissible} if there exists some $p > n$ with 
\[
    g \in \Wloc^{1,p}( M , T^*M \otimes T^*M) .
\]
\end{dfn}

Recall that,  by the Sobolev embedding theorem, each section in $\Wloc^{1,p}(M, T^*M \otimes T^*M)$, $p > n$, has a unique continuous representative. 
We remark that Lipschitz continuous Riemannian metrics, which often arise from smooth Riemannian metrics in geometric gluing constructions as in Theorem \ref{theo:disk} below, are admissible. 

For an admissible Riemannian metric  $g$ on $M$  the scalar curvature  is defined as a distribution $\scal_g \colon C^{\infty}_c(M) \to \R$, see  Lee-LeFloch \cite{LL15} and Section \ref{sec:distrib_scal} below. 
In particular, we can define lower scalar curvature bounds in the distributional sense, see Definition \ref{lowerbounds}.

\begin{remin}  \label{differential} 
Let $M$ and $N$ be smooth manifolds with continuous Riemannian metrics $g$ and $h$. 
Let $f\colon (M, d_g)  \to (N,d_h)$ be Lipschitz continuous. 
If $f$ is (totally) differentiable at $x \in M$, we denote by 
\[
   d_x f\colon T_x M \to T_{f(x)} N 
\]
the differential of $f$ at $x$.
By Rademacher's theorem, $f$ is differentiable almost everywhere on $M$ with differential of regularity $\Lloc^{\infty}$.
\end{remin}

For $n \geq 3$ it suffices to assume  in Theorem \ref{Llarull}  that the smooth map $f$ is not $1$-Lipschitz, but only {\em area-nonincreasing}, {i.e.}, for all $x \in M$ the induced map  $\Lambda^2 d_xf \colon \Lambda^2 T_xM \to \Lambda^2 T_x \S^n$ is norm-bounded by $1$, see \cite[Theorem C]{Lla98}.
This notion is  generalized to Lipschitz maps as follows.

\begin{dfn}\label{def:weak_iso}
In the situation of Reminder \ref{differential}, we say that $df$   is  {\em area-nonincreasing a.e.}, if for almost all  $x \in M$ where $f$ is differentiable  the operator norm of the induced map 
\[
    \Lambda^2 d_x f \colon \Lambda^2 T_x M \to \Lambda^2 T_{f(x)} N 
\]
on the second exterior power of $T_x M$ satisfies $| \Lambda^2 d_x f| \leq 1$. 

\end{dfn}

Each $1$-Lipschitz map $f \colon (M,d_g)  \to (N,d_h)$ satisfies  $|d_x f| \leq 1$ for all $x \in M$ where $f$ is differentiable, see Proposition \ref{Lipschitzlength}. 
In particular, $df$  is  area-nonincreasing a.e.

We now state our main result.  
It provides an affirmative answer to a question  posed by Gromov in \cite[Section 4.5, Question (b)]{Gromov4}. 

\begin{thmx}\label{T:main_even}
Let $M$ be a closed smooth connected  oriented manifold of even dimension $n$ which admits a spin structure, let $g$ be an admissible  Riemannian metric with $\scal_g \geq n(n-1)$, and let  $f \colon (M,d_g) \to \S^n$ be a Lipschitz continuous map of non-zero degree. 
Futhermore, if $n = 2$, we assume that $f$ is $1$-Lipschitz, and if $n \geq 4$, then we assume that $df$ is area-nonincreasing a.e.

Then  $f$ is a metric isometry. 
\end{thmx}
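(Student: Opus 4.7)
My plan is to extend Llarull's argument to the low-regularity setting and then extract metric rigidity of $f$ via the theory of quasiregular maps. First, equip $\S^n$ with its standard complex spinor bundle $\spinor_{\S^n}$ and pull it back to $M$ via $f$. Because $f$ is Lipschitz and $g$ is $W^{1,p}$ with $p > n$, the bundle $W \defeq f^{*}\spinor_{\S^n}$ carries a Lipschitz vector bundle structure together with a Lipschitz Hermitian connection $\nabla^{W}$ whose curvature lies in $L^\infty$. Twisting the spinor Dirac operator of $(M,g)$ by $(W,\nabla^{W})$ produces a self-adjoint elliptic operator $\Dirac_W$, to be analysed in the Sobolev framework associated to an admissible metric. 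Because $n$ is even and $\deg f \neq 0$, a low-regularity version of the Atiyah--Singer index theorem applied to $\Dirac_W^{+}$ yields, as in \cite{Lla98}, a non-vanishing index proportional to $\deg f$, and hence a non-trivial harmonic spinor $\phi \in \ker \Dirac_W$.

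Next, I would establish a distributional Schr\"odinger--Lichnerowicz identity for $\Dirac_W$ and pair it with $\phi$. Combining $\scal_g \geq n(n-1)$ with Llarull's pointwise estimate on the twist curvature and with $|df|_{\mathrm{op}} \leq 1$ (coming from $f$ being $1$-Lipschitz), the chain of inequalities from \cite{Lla98} must saturate. Carried out distributionally, the outcome is that $\nabla\phi = 0$ in a Sobolev sense, that $\scal_g = n(n-1)$ as distributions, and that Llarull's pointwise curvature equality holds almost everywhere. A parallel harmonic spinor has constant pointwise norm, so $\phi$ is non-vanishing; the equality case in Llarull's local computation then says that the Rademacher differential $df_x$, which exists almost everywhere by the Lipschitz hypothesis, is conformal with conformal factor at most one for almost every $x \in M$. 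Equivalently $|df|_{\mathrm{op}}^n \leq J_f$ almost everywhere, which is precisely the defining inequality of a $1$-quasiregular map in the sense of Reshetnyak.

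I expect the distributional Lichnerowicz argument and the transfer of its infinitesimal equality case to a global quasiregularity statement to be the main obstacle: one must push Llarull's spin curvature computation through in a framework where $g$ is merely $W^{1,p}$ and $W$ is only Lipschitz, justify the integration by parts against a harmonic spinor of low regularity, and couple the resulting pointwise conformality with the measurable differentiation theory of Lipschitz maps. Once quasiregularity is established, Reshetnyak's theorems yield that a non-constant $1$-quasiregular map between Riemannian $n$-manifolds is open and discrete and, in the conformal case $K=1$, a local isometry up to a constant factor that the $1$-Lipschitz bound pins down to one. Since $M$ is closed and $\deg f \neq 0$, $f$ is then a Riemannian covering of $\S^n$; because $n \geq 2$ the sphere is simply connected, so $f$ must be a homeomorphism and therefore a metric isometry.
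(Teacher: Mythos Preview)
Your overall architecture matches the paper's: twist the Dirac operator by $f^*\spinor_{\S^n}$, use the low-regularity index theorem to produce a non-trivial harmonic spinor, feed it into a distributional Schr\"odinger--Lichnerowicz formula, and then invoke quasiregular-map theory to upgrade infinitesimal rigidity to a metric isometry. However, there is a genuine gap at the point where you pass from Llarull's equality case to quasiregularity.

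The equality case of Llarull's curvature estimate tells you that the singular values of $d_xf$ are all equal to $1$, i.e.\ that $d_xf$ is a linear \emph{isometry} almost everywhere. It does \emph{not} tell you that $d_xf$ is orientation preserving, so the inequality $|df|_{\mathrm{op}}^n \leq J_f$ (with the signed Jacobian $J_f$, which is what quasiregularity requires) does not follow. This is not a technicality: the folding map $\rho\colon\S^n\to\S^n$, $\rho(x^0,x^1,\dots,x^n)=(-|x^0|,x^1,\dots,x^n)$, is $1$-Lipschitz with $d\rho$ an isometry a.e., yet it is not a metric isometry, precisely because $d\rho$ is orientation preserving on one hemisphere and orientation reversing on the other. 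In the smooth Llarull setting the sign of $\det(df)$ is continuous and hence constant on the connected manifold $M$, but for a merely Lipschitz $f$ the differential exists only a.e.\ and no such continuity argument is available.

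The paper fills this gap with an additional algebraic step extracted from the harmonic spinor itself. The equality case yields not only that $d_xf$ is an isometry but also that $(v\cdot w\otimes \overline{v}_x\cdot\overline{w}_x)\cdot\psi(x)=\psi(x)$ for every $\gamma$-orthonormal pair $v,w\in T_xM$. Working in a local trivialisation, one shows that if $x\in M_+$ (where $\det d_xf=+1$) and $y\in M_-$ (where $\det d_yf=-1$) then $\psi(x)\perp\psi(y)$ in $W\otimes W$; projecting $\psi$ onto the span of $\{\psi(x):x\in M_+\}$ produces an $H^1$-function whose norm equals the constant $|\psi|>0$ on $M_+$ and $0$ on $M_-$, contradicting connectedness if both sets have positive measure. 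Only after this orientation dichotomy does the paper invoke the quasiregular machinery (in local charts, as $(1+\varepsilon)$-quasiregularity rather than $1$-quasiregularity) to conclude that $f$ is a local homeomorphism, hence a covering, hence a metric isometry. You should insert this orientation argument before your appeal to Reshetnyak.
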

 
\begin{rem} \label{weiter} \begin{enumerate}[(a)] 
  \item \label{regular}  If $g$ is assumed to be smooth then each metric isometry $(M, d_g) \to \S^n$ is a smooth Riemannian isometry by the Myers-Steenrod theorem. 
  \item  By Proposition \ref{Holder} each metric isometry between smooth manifolds equipped with admissible Riemannian metrics preserves the scalar curvature distributions. 
  In particular, in Theorem \ref{T:main_even}, we get $\scal_g \equiv n(n-1)$. 
  This will also follow from the proof of Theorem \ref{T:main_even} in Section \ref{sec:proofs}. 
   \item In the first version of this paper, we conjectured that Theorem \ref{T:main_even} also holds for all odd $n \geq 3$. 
   In the meantime  the preprint \cite{ML22}  by Lee-Tam appeared which develops an alternative approach to Theorem \ref{T:main_even} based on the Ricci and harmonic map heat flows. 
   In this way they generalize Theorem \ref{T:main_even} to all $n \geq 2$ under the assumption that $f$ is $1$-Lipschitz. 
   So far, however, the case of area-nonincreasing a.e.~maps $f$   seems to be
   accessible only via the Dirac operator method and remains open for odd $n
   \geq 3$. It is work in progress by the authors to generalise Theorem
   \ref{T:main_even} to these cases.
\end{enumerate} 
\end{rem}

Our approach  provides a new perspective on  scalar curvature rigidity results for smooth Riemannian manifolds with boundary, previously obtained by studying boundary value problems for Dirac-type operators  as in B\"ar-Hanke \cite[Theorem 2.19]{BH20}, Cecchini-Zeidler \cite[Corollary 1.17]{CZ21}, and Lott \cite[Corollary 1.2]{Lott}. 
This is illustrated by  the following strong comparison principle ``larger than hemispheres''.
We denote  by $\D^n_{\pm}$ the closed upper, respectively lower hemispheres of $\S^n$, equipped with the Riemannian metrics induced from $\S^n$. 
Furthermore, we identify $\S^{n-1}$ with the equator sphere of $\S^n$.

\begin{thmx}\label{theo:disk}
Let $(M,g)$ be a compact smooth connected oriented Riemannian manifold of even dimension $n$ which admits a spin structure and has boundary $\partial M$, and let $f \colon (M,d_g) \to \S^n$ be a Lipschitz continuous map such that 
\begin{enumerate}[\myicon] 
\item if $n = 2$, then $f$ is $1$-Lipschitz,
\item if $n \geq 4$, then $df$ is area-nonincreasing a.e., 
\item $\scal_g \geq n(n-1)$, 
\item $g$ has non-negative mean curvature along $\partial M$ with respect to the interior normal\footnote{With this convention the unit ball in $\R^n$ has mean curvature $n-1$.}, 
  \item $f( \partial M) \subset \D^n_{-}$ and $f \colon (M, \partial  M) \to (\S^n, \D^n_-)$ is of non-zero degree. 
\end{enumerate} 
Then $ \im ( f) = \D^n_+$ and $f \colon (M, g)  \to \D^n_+$ is a smooth Riemannian  isometry.
 \end{thmx}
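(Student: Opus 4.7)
The plan is to reduce Theorem \ref{theo:disk} to Theorem \ref{T:main_even} (and Addendum \ref{zusatz} when $n \geq 4$) via a doubling construction.

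Let $M$ be the topological double of $D^n$ along $\partial D^n$, equipped with its canonical smooth structure; $M$ is diffeomorphic to $S^n$. Write $D_1, D_2 \subset M$ for the two embedded copies of $D^n$. Equipping each copy with $g$ yields a globally Lipschitz Riemannian metric $\hat g$ on $M$, admissible in the sense of Definition \ref{admissible}. The distributional scalar curvature then satisfies $\scal_{\hat g} \geq n(n-1)$: on each smooth open half this is by hypothesis, while the singular contribution concentrated on the gluing hypersurface is a positive multiple of the sum of the mean curvatures of $\partial D^n$ computed from each side with respect to the interior normal, namely $2H \geq 0$.

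Next I would double the map. Let $\rho \colon \S^n \to \D^n_-$ denote the folding map: the identity on $\D^n_-$ and the reflection across the equator $\S^{n-1}$ on $\D^n_+$. Since that reflection is an isometry of $\S^n$ fixing $\S^{n-1}$, the map $\rho$ is a $1$-Lipschitz retraction whose differential is a linear isometry wherever defined. Define $\hat f \colon M \to \S^n$ by $\hat f|_{D_1} = f$ and $\hat f|_{D_2} = \rho \circ f$. Because $f(\partial D^n) \subset \D^n_-$ and $\rho|_{\D^n_-} = \id$, the two definitions agree along $\partial D^n$, so $\hat f$ is continuous; it is $1$-Lipschitz whenever $f$ is, and its differential is area-nonincreasing a.e.\ whenever $df$ is. Since $\hat f(D_2) \subset \D^n_-$, for every regular value $p$ in the interior of $\D^n_+$ the fibre $\hat f^{-1}(p)$ lies in $D_1$ and coincides with $f^{-1}(p)$; hence $\deg(\hat f)$ equals the relative degree of $f \colon (D^n, \partial D^n) \to (\S^n, \D^n_-)$ and is non-zero.

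Applying Theorem \ref{T:main_even} (if $n = 2$) or Addendum \ref{zusatz} (if $n \geq 4$) yields that $\hat f$ is a metric isometry of $(M, d_{\hat g})$ onto $\S^n$. In particular $\hat f$ is volume preserving; together with $\hat f(D_2) \subset \D^n_-$ and $\vol(\D^n_+) = \vol(\D^n_-) = \tfrac{1}{2} \vol(\S^n)$, this forces $\hat f(D_1) = \D^n_+$. The restriction $f = \hat f|_{D_1} \colon (D^n, d_g) \to \D^n_+$ is therefore a metric isometry, and by the Myers--Steenrod theorem, applied on the open interior of $D^n$ and extended to the boundary by continuity, it is a smooth Riemannian isometry. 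The main technical obstacle is to rigorously justify the distributional scalar curvature bound for $\hat g$: the metric is only Lipschitz---not $C^1$---across the gluing hypersurface, so one must verify, within the Lee--LeFloch distributional framework developed earlier in the paper, that the classical doubling formula expressing the singular part of $\scal_{\hat g}$ as a positive multiple of the mean curvature of $\partial D^n$ remains valid under this weak regularity.
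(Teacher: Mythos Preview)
Your proposal is correct and follows essentially the same approach as the paper: double $(D^n,g)$ to an admissible Lipschitz metric on $S^n$, double the map via the folding reflection $\rho$, check the degree, and apply Theorem \ref{T:main_even}/Addendum \ref{zusatz}. The technical obstacle you flag---that the distributional scalar curvature of the doubled metric inherits the lower bound, with the singular contribution along the gluing hypersurface controlled by the mean curvature---is exactly what the paper handles by invoking \cite[Proposition~5.1]{LL15}; your final step via a volume argument and Myers--Steenrod is a slightly more explicit variant of the paper's one-line conclusion.
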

We derive this result from Theorem \ref{T:main_even} by a doubling procedure in Section \ref{sec:proofs}.
Contrary to  Gromov \cite{Gro18, Gromov4}, we work directly with the resulting non-smooth metric on the double, thus dispensing with ``smoothing the corners''.

If  $f$ is smooth, $f(\partial M) \subset \S^{n-1}$ and  $f|_{\partial M}$ is $1$-Lipschitz with respect to $d_g|_{\partial M}$, then Theorem \ref{theo:disk}  follows from Lott \cite[Corollary 1.2]{Lott}.

\begin{rem} Our methods can also be applied to the more general situation treated  in \cite{GS02}. 
This issue is not discussed further in our paper. 
\end{rem}

Our paper is structured as follows. 
In Section \ref{rigess} we provide an infinitesimal characterisation of  metric isometries between smooth manifolds equipped with continuous Riemannian metrics.
This discussion, which is of independent interest,  makes essential use of Reshetnyak's  theory of quasiregular maps; see Rickman \cite{Rick93} for a comprehensive introduction. 

In Section \ref{sec:distrib_scal} we introduce the scalar curvature
distribution of an admissible Riemannian metric, following Lee-LeFloch  \cite{LL15}, and derive some properties relevant for our discussion. 

In Section \ref{S:Lipschitz bundles} we construct the spinor Dirac operator
twisted with  Lipschitz bundles on a spin manifold equipped with an admissible Riemannian metric. 
Relying on previous work of Bartnik-Chru\'sciel \cite{BartnikChrusciel}, we establish its main functional analytic properties  and prove an index formula in Theorem \ref{thm:index}.

The main result of Section \ref{sec:Lichnerowicz} is Theorem \ref{T:integral Lichnerowicz} which provides  a Schr\"odinger-Lichnerowicz formula for the twisted Dirac operator  considered in Section \ref{S:Lipschitz bundles} if the twist bundle is the pull back of a smooth bundle along a Lipschitz map.

Theorems \ref{T:main_even} and \ref{theo:disk} are  proved in Section \ref{sec:proofs}. 
Similar to the original proof of Theorem \ref{Llarull}, the Schr\"odinger-Lichnerowicz formula  is applied to a non-zero harmonic spinor field on $M$  -- whose existence is guaranteed by the index formula --  which implies  that the differential of $f$ is an isometry almost everywhere. 
However, in the absence of an inverse function theorem for Lipschitz maps, this no longer implies that $f$ is bijective, let alone a metric isometry. 
In fact $f$ could have folds, see Example \ref{folding}. 

Here our new observation is that the existence of this spinor field further implies that the differential of $f \colon M \to \S^n$ is {\em orientation preserving almost everywhere}, possibly after reversing the orientation of $M$. 
As explained in Section \ref{rigess}, this implies that $f$ is a quasiregular  map without branch points, hence a homeomorphism which must be a metric isometry by a curve length comparison argument. 

\smallskip 

{\it Acknowledgement.} We are grateful to Pekka Pankka for helpful correspondence about quasiregular maps, and to Christian B\"ar and Lukas Sch\"onlinner for useful comments. 
The authors were supported by SPP 2026 ``Geometry at Infinity'' funded by the Deutsche Forschungsgemeinschaft.

\section{A characterisation of metric isometries} \label{rigess} 

Let $M$ and $N$ be connected smooth manifolds, equipped with continuous Riemannian metrics $g$ and $h$.
Throughout this section, let
  \begin{equation}
   f \colon (M,d_g)    \to  (N,d_h)  \label{eq:setup_map}
 \end{equation}
be a locally Lipschitz continuous map. 

 \begin{prop}  \label{Lipschitzlength}  If $f$ is $L$-Lipschitz, then for all $x \in M$ where $f$ is differentiable we have $|d_x f| \leq L$.
  \end{prop}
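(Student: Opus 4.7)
The goal is to show $|d_x f(v)|_h \le L\,|v|_g$ for every $v \in T_x M$, from which the operator norm bound $|d_x f|\le L$ follows by taking a supremum over unit vectors. The strategy is to reduce the metric Lipschitz inequality $d_h(f(x),f(y))\le L\,d_g(x,y)$ to its infinitesimal form at $x$ by probing with a suitably chosen chart segment and passing to the limit.

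The central ingredient I would first establish is an \emph{infinitesimal distance lemma} for continuous Riemannian metrics: for any $w\in T_xM$ and any smooth chart $\phi$ around $x$ chosen so that $\phi(x)=0$ and $g_x$ equals the standard Euclidean form at $0$, the straight chart-segment $\gamma(t):=\phi^{-1}(tw_\phi)$ satisfies
\[
   \lim_{t\to 0^+} \frac{d_g(x,\gamma(t))}{t}=|w|_g.
\]
The inequality $\le$ uses $\gamma$ itself as a competitor: by continuity of $g$ and $\gamma'$, one has $\ell(\gamma|_{[0,t]})=\int_0^t|\gamma'(s)|_g\,ds=(|w|_g+o(1))t$. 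For $\ge$, given $\epsilon>0$ I shrink the chart so that $(1-\epsilon)|\cdot|_{\mathrm{Eucl}}\le|\cdot|_g\le(1+\epsilon)|\cdot|_{\mathrm{Eucl}}$ pointwise on its image; any absolutely continuous competitor from $x$ to $\gamma(t)$ either stays inside a fixed small coordinate ball, in which case its $g$-length exceeds $(1-\epsilon)|\phi(\gamma(t))|=(1-\epsilon)(|w|_g+o(1))t$, or leaves the ball, forcing its $g$-length to exceed a positive constant much larger than $|w|_g t$ for small $t$. This fact is essentially contained in Burtscher \cite{Burt15}.

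Granting the lemma, fix $x$ where $f$ is differentiable and $v\in T_xM$, and choose charts $\phi$ around $x$ and $\psi$ around $f(x)$ normalised as above. Set $\gamma(t):=\phi^{-1}(tv_\phi)$. Euclidean differentiability of $\tilde f:=\psi\circ f\circ\phi^{-1}$ at the origin gives $\psi(f(\gamma(t)))=t\,(d_xf(v))_\psi+r(t)$ with $|r(t)|=o(t)$. Applying the lemma to the model curve $\delta(t):=\psi^{-1}(t\,(d_xf(v))_\psi)$ in $N$ yields $d_h(f(x),\delta(t))=(|d_xf(v)|_h+o(1))t$, and the upper bound from the lemma, reapplied between $\delta(t)$ and $f(\gamma(t))$, yields $d_h(\delta(t),f(\gamma(t)))\le(1+\epsilon)|r(t)|=o(t)$. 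Hence $d_h(f(x),f(\gamma(t)))=|d_xf(v)|_h\,t+o(t)$ while $d_g(x,\gamma(t))=|v|_g\,t+o(t)$. Substituting into $d_h(f(x),f(\gamma(t)))\le L\,d_g(x,\gamma(t))$, dividing by $t$, and sending $t\to 0^+$ gives $|d_xf(v)|_h\le L\,|v|_g$.

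The main obstacle is the infinitesimal distance lemma itself: since $g$ is only continuous, geodesic normal coordinates are unavailable, so the lower bound on $d_g$ must be extracted by a direct comparison between $g$ and its constant model $g_x$ on a small chart ball, together with a simple argument that short competitor paths cannot escape this ball. Once the lemma is in hand, the remainder of the proof is a clean passage to the limit.
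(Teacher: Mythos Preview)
Your proof is correct and rests on the same core fact as the paper's: for a curve $\gamma$ differentiable at $0$ in a manifold with continuous Riemannian metric, $\lim_{t\to 0} d(\gamma(0),\gamma(t))/|t| = |\gamma'(0)|$. You call this the ``infinitesimal distance lemma'' and sketch its proof; the paper simply cites it as \cite[Prop.~4.10]{Burt15}.

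The paper's execution is leaner in two respects. On the $M$-side it never invokes the full limit: the trivial upper bound $d_g(x,\gamma(t)) \le \int_0^t |\gamma'(s)|_{g}\,ds \le (1+\eta)\,t$ (for unit $v$ and small $t$, by continuity of $g$ and $\gamma'$) is all that is needed, so the lower-bound half of your lemma is superfluous there. On the $N$-side the paper applies the limit formula directly to the composed curve $f\circ\gamma$, which is differentiable at $0$ since $f$ is differentiable at $x$; this avoids your auxiliary model curve $\delta(t)=\psi^{-1}(t\,(d_xf(v))_\psi)$ and the accompanying triangle-inequality estimate. Your route is valid but does more work than required.
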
 

\begin{proof} Let $x \in M$ where $f$ is differentiable and let  $v \in T_x M$ with $|v|_{g_x} = 1$. 
Let $\gamma \colon (-\eps, \eps) \to M$ be a smooth curve with $\gamma(0) = x$ and $\gamma'(0) = v$.
The curve $f \circ \gamma \colon (-\eps, \eps) \to N$ is Lipschitz continuous, hence absolutely continuous. 
Furthermore it is differentiable at $t = 0$ by our choice of $x$. 

Let $\eta > 0$. 
Since $f$ is $L$-Lipschitz, since $|v|_{g_x} = 1$ and both $g$ and $\gamma'$ are continuous maps, we can assume (at least for some smaller $\eps$)  that  for $\delta \in (-\eps, \eps)$ we get 
\begin{equation} \label{absch} 
     d_{h}   ( f \circ \gamma(\delta) , f \circ \gamma(0))  \leq L \cdot d_g ( \gamma(\delta), \gamma(0)) \leq L \int_{0}^{\delta} | \gamma'(t)|_{g_{\gamma(t)}}  dt \leq (L + \eta)  \cdot | \delta|. 
\end{equation} 
By  \cite[Prop. 4.10]{Burt15} we have
\[
    \lim_{\delta \to 0 } \frac{d_{h} ( f \circ \gamma(\delta) , f \circ \gamma(0))  }{ |\delta| } = | (f \circ \gamma)'(0)|_{h}  
\]
and together with \eqref{absch}  this shows 
\[
    | (f \circ \gamma)'(0)|_{h} \leq L + \eta. 
\]
Letting $\eta$ go to $0$, we obtain $| (f \circ \gamma)'(0)|_{h} \leq L$, finishing  the proof.
\end{proof} 

If $g$ and $h$ are smooth, $f$ is proper (i.e., preimages of compact sets are compact), and $f$ induces a surjective map $\pi_1(M) \to \pi_1(N)$, then $f$ is a metric isometry if and only if it is a Riemannian isometry, {i.e.}, the differential $d_xf \colon (T_xM, g_x) \to (T_{f(x)} N,h_{f(x)} )$ is an isometry for all $x \in M$. 
In this section we provide  a similar characterisation  for continuous $g$ and $h$. 

\begin{dfn} 
We say that $df$ is an
\begin{enumerate}[\myicon] 
 \item {\em isometry a.e.}, if for almost all $x \in M$ where $f$ is differentiable, the differential $d_x f \colon (T_x M, g_x)  \to ( T_{f(x)} N, h_{f(x)}) $ is an isometry, 
 \item {\em locally orientation preserving isometry a.e.}, if for each $p \in M$ there exist oriented neighborhoods $p \in U \subset M$ and $f(p) \in V \subset N$ such that for almost all $x \in U$ where $f$ is differentiable, the differential $d_x f \colon (T_x M , g_x) \to  ( T_{f(x)}N, h_{f(x)}) $ is an orientation preserving isometry. 
 \end{enumerate} 
\end{dfn}

\begin{prop}  \label{Lipschitzlength_neu}  If $f$ is a metric isometry, then the differential  $df$ is a locally orientation preserving isometry a.e. 
  \end{prop}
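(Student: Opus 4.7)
The plan is to reduce the statement to a chain rule computation after using that $f^{-1}$ is itself $1$-Lipschitz, and then to pin down the orientation by a local degree argument via the signed change-of-variables formula for Lipschitz homeomorphisms.

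Since $f$ is a metric isometry it is a homeomorphism and its inverse $f^{-1}\colon (N,d_h) \to (M,d_g)$ is again a metric isometry. Both $f$ and $f^{-1}$ are therefore $1$-Lipschitz, hence differentiable almost everywhere in local charts by Rademacher's theorem, and Proposition \ref{Lipschitzlength} supplies the operator-norm bounds $|d_x f| \leq 1$ and $|d_y f^{-1}| \leq 1$ wherever these differentials exist. Let $E \subset N$ denote the null set on which $f^{-1}$ fails to be differentiable. Since $f^{-1}$ is $1$-Lipschitz, it satisfies Lusin's property $(N)$ in charts, so its image $f^{-1}(E) \subset M$ is also null. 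Thus for almost every $x \in M$ the map $f$ is differentiable at $x$ and $f^{-1}$ is differentiable at $f(x)$; at any such $x$ the chain rule applied to the identity $f^{-1} \circ f = \id_M$ shows that $d_x f$ is a linear isomorphism with inverse $d_{f(x)} f^{-1}$. Since $d_x f$ and $(d_x f)^{-1}$ then both have operator norm at most one, a standard finite-dimensional argument forces $d_x f \colon (T_x M, g_x) \to (T_{f(x)} N, h_{f(x)})$ to be a linear isometry.

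For the orientation assertion, I would fix $p \in M$ and choose connected oriented coordinate neighborhoods $p \in U \subset M$ and $V = f(U) \subset N$ with compact closures, small enough to be identified via charts with bounded open subsets of $\R^n$ carrying the continuous metrics $g$ and $h$. As a homeomorphism between connected oriented open Euclidean sets, $f|_U$ has constant topological degree $\epsilon \in \{+1,-1\}$, and after reversing the orientation on $V$ if necessary we may assume $\epsilon = +1$. The signed change-of-variables formula for Lipschitz homeomorphisms, applied in the chart, then yields
\[
   \int_U \det(d_x f) \, d\vol_g(x) \;=\; \epsilon \cdot \vol_h(V) \;=\; \vol_h(V),
\]
where $\det(d_x f)$ is computed with respect to oriented orthonormal bases of $(T_x M, g_x)$ and $(T_{f(x)} N, h_{f(x)})$. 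Since $d_x f$ is a linear isometry at a.e.\ $x$, $|\det(d_x f)| = 1$ a.e., and the unsigned area formula similarly gives $\int_U |\det(d_x f)| \, d\vol_g(x) = \vol_h(V)$. Comparing the two integrals forces $\det(d_x f) = +1$ almost everywhere on $U$, which is precisely the claim that $d_x f$ is an orientation-preserving linear isometry at almost every $x \in U$. I expect the main technical point to be the availability of the signed change-of-variables formula in this low-regularity setting, but this is classical for Lipschitz homeomorphisms of Euclidean domains, and the continuous positive volume densities in charts enter only as bounded non-vanishing weights that do not disturb the equality-of-integrals comparison.
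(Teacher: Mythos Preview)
Your proof is correct and follows essentially the same approach as the paper: both deduce that $df$ is an isometry a.e.\ from Proposition~\ref{Lipschitzlength} applied to $f$ and $f^{-1}$, and both obtain the orientation statement from the constancy of the local homological degree of the homeomorphism $f$. The paper's proof is a two-sentence sketch that leaves both the chain-rule step and the link between topological degree and the sign of the Jacobian implicit; your signed change-of-variables argument is precisely the standard way to make that second link explicit.
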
 

\begin{proof}  If follows from Proposition \ref{Lipschitzlength}  that $df$ is an isometry  a.e. 
Furthermore, the local homological mapping degree of the homeomorphism $f \colon M \to N$ is locally constant and hence  the differential  $d_x f$ is a locally orientation preserving isometry a.e. 
\end{proof}

The following result provides a converse.

\begin{thm} \label{essrigid} 
Let $f$ be proper, let $f$ induce a surjective map $\pi_1(M) \to \pi_1(N)$ and  let $df$ be a locally  orientation preserving isometry a.e. 

Then  $f$ is a metric isometry. 
\end{thm} 

\begin{rem} For smooth $g$ and $h$ this is implied by \cite[Thm.~1.1]{KMS19}. 
 \cite[Question 3 on p.~374]{KMS19}  asks  whether this result generalizes to metrics of regularity less than $C^{1, \alpha}$. 
Our Theorem \ref{essrigid} gives a positive answer for continuous $g$ and $h$. 
\end{rem}

\begin{exa} \label{folding} The $1$-Lipschitz map $\rho \colon \S^n \to \S^n$ with 
\[
    \rho(x^0, x^1, \ldots, x^n) =  (- |x^0|, x^1, \ldots, x^n)
\]
which leaves $\D^n_-$ pointwise fixed and reflects $\D^n_+$ onto $\D^n_-$ is not a metric isometry. 
Indeed, the differential $d\rho$ is an isometry a.e., but not a locally  orientation preserving isometry a.e. 

\end{exa}

The proof of Theorem \ref{essrigid} relies on the theory of quasiregular maps. 
We recall the basic definition from \cite{Rick93}, restricted to our setting.

\begin{dfn} \label{quasiregular} Let $n \geq 2$ and let $G \subset \R^n$ be a domain, that is, $G$ is a non-empty, open and connected subset of $\R^n$. 
Let $f \in \Liploc(G, \R^n)$ be a locally Lipschitz map. 

Let $K \geq 1$. 
The map $f$  is called {\em $K$-quasiregular} if  for almost all $x \in G$ where  $f$ is differentiable, we have
  \[
        | d_x f|^n \leq K \det (d_x f) . 
  \]
The map $f$ is called {\em quasiregular} if $f$ is $K$-quasiregular for some $K \geq 1$. 
\end{dfn} 

\begin{rem} Since locally Lipschitz maps are continuous and of regularity $\Wloc^{1,\infty}$, it follows  from  \cite[Proposition I.1.2]{Rick93} that $K$-quasiregular locally Lipschitz maps in the sense of Definition  \ref{quasiregular} are $K^{n-1}$-quasiregular in the sense of  \cite[Definition I.2.1 and the subsequent discussion]{Rick93}.

Note that for quasiregular  $f$, the Jacobian $\det(df)$ is non-negative almost everywhere on $G$. 
\end{rem} 

The following fact is implied by \cite[Theorem VI.8.14]{Rick93}. 

\begin{prop} \label{localhomeo} For all $n \geq 3$ there exists $\eps = \eps(n) > 0$ with the following property. 
Let $G \subset \R^n$ be a domain and let $f \in \Liploc (G, \R^n)$ be $(1+ \eps)$-quasiregular and nonconstant. 

Then  $f$ is a local homeomorphism, that is, for all $x \in G$ there exists an open neighborhood $x \in U \subset G$ such that $f(U) \subset \R^n$ is open and $f|_{U} \colon U \to f(U)$ is a homeomorphism. 
\end{prop}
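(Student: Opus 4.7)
The plan is to reduce the statement to the theorem of Martio--Rickman--V\"ais\"al\"a that the branch set of a $K$-quasiregular map in dimension $n\geq 3$ is empty once $K$ is sufficiently close to $1$. This is precisely the content of \cite[Theorem VI.8.14]{Rick93}, so the work lies in (i) translating between the two competing normalizations of the quasiregularity constant and (ii) upgrading ``empty branch set'' to ``local homeomorphism''.

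For (i), the remark preceding the proposition records that a map which is $K$-quasiregular in the sense of Definition~\ref{quasiregular} is $K^{n-1}$-quasiregular in the analytic sense of \cite[Def.~I.2.1]{Rick93}. Rickman's Theorem~VI.8.14 provides a constant $K_0 = K_0(n) > 1$, depending only on $n \geq 3$, such that every nonconstant $K$-quasiregular map on a domain $G \subset \R^n$ with $K \leq K_0$ has empty branch set $B_f$. Choosing $\eps = \eps(n) > 0$ small enough that $(1+\eps)^{n-1} \leq K_0(n)$, any map $f$ satisfying the hypothesis of Proposition~\ref{localhomeo} becomes a nonconstant $K_0$-quasiregular map in Rickman's convention, so $B_f = \emptyset$.

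For (ii), I would appeal to the general fact, built into the definition of the branch set of a continuous, open, discrete map, that $B_f$ is precisely the set of points at which $f$ fails to be a local homeomorphism. Nonconstant quasiregular maps are open and discrete (Reshetnyak's theorem, \cite[Theorem~I.4.1]{Rick93}); hence $B_f = \emptyset$ forces $f$ to be a local homeomorphism at every point of $G$. Combined with the previous paragraph, this yields the proposition with the constant $\eps(n)$ chosen above.

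The only genuine obstacle is bookkeeping: one has to keep careful track of which of the two normalizations ($K$ in our sense versus $K^{n-1}$ in Rickman's) appears in the cited statements, and verify that the smallness hypothesis really produces a map to which Rickman's local injectivity theorem applies. The deep content --- the existence of a dimension-dependent gap between $1$-quasiregular maps (which are conformal, hence local homeomorphisms by Liouville's theorem) and general quasiregular maps (which can branch) --- is entirely encoded in Theorem~VI.8.14 and not something we would reprove here.
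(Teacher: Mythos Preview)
Your proposal is correct and matches the paper's approach: the paper does not give a proof at all, but simply states that the proposition is implied by \cite[Theorem VI.8.14]{Rick93}. Your write-up supplies the bookkeeping (translation of the quasiregularity constant via the remark after Definition~\ref{quasiregular}, and the passage from empty branch set to local homeomorphism via Reshetnyak's theorem) that the paper leaves implicit.
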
 

\begin{rem} Since each complex analytic map $\R^2 \supset G \to \R^2$ is $1$-quasiregular, the requirement $n \geq 3$ is necessary. 
\end{rem}

\begin{lem} \label{linalg} 
Let $\gamma_1$ and $\gamma_2$ be scalar products on $\R^n$. 
Assume that there exists $0 < \delta \leq \frac{1}{2}$ such that for all $v \in \R^n$ and $i = 1, 2$ we have
\[ 
      (1- \delta) |v| \leq |v|_{\gamma_i} \leq (1+ \delta) |v| . 
 \]
 Then, for all linear isometries $A \colon (\R^n, \gamma_1)  \to (\R^n, \gamma_2)$, we have 
\[
     |A|^n \leq (1+ 4\cdot 3^{2n}  \cdot \delta)   |\det A| . 
\]
\end{lem}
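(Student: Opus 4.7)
The plan is to reduce the estimate to the singular values of $A$. Since $A$ is a linear isomorphism, its singular values $\sigma_1 \geq \cdots \geq \sigma_n > 0$ with respect to the standard Euclidean inner product are well-defined, and satisfy $|A| = \sigma_1$ and $|\det A| = \sigma_1 \cdots \sigma_n$.

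First I would bound each $\sigma_i$ individually. Set $r := (1+\delta)/(1-\delta)$, so that $r \in [1, 3]$ for $\delta \in [0, 1/2]$. The defining identity $|Av|_{\gamma_2} = |v|_{\gamma_1}$ combined with the two-sided comparisons between the $|\cdot|_{\gamma_i}$ and the Euclidean norm gives, for every $v \in \R^n$,
\[
|Av|^2 \leq (1-\delta)^{-2}|Av|_{\gamma_2}^2 = (1-\delta)^{-2}|v|_{\gamma_1}^2 \leq r^2 |v|^2,
\]
and symmetrically $|Av|^2 \geq r^{-2}|v|^2$. Hence $\sigma_i \in [r^{-1}, r]$ for every $i$. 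Consequently,
\[
\frac{|A|^n}{|\det A|} = \frac{\sigma_1^{n-1}}{\sigma_2 \cdots \sigma_n} \leq \frac{r^{n-1}}{r^{-(n-1)}} = r^{2(n-1)}.
\]

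What remains is the purely numerical inequality $r^{2(n-1)} \leq 1 + 3^{2(n+1)}\delta$ on $[0, 1/2]$, and this is where I expect the main work to lie, since $r^{2(n-1)}$ is exponential in $n$ while the target bound is only linear in $\delta$. The key observation is that the constant $3^{2(n+1)} = 81 \cdot 3^{2(n-1)}$ leaves a large multiplicative gap at $\delta = 1/2$, which buys enough room for a short calculus argument. Concretely, set $\phi(\delta) := r^{2(n-1)} - 1 - 3^{2(n+1)}\delta$. Then $\phi(0) = 0$ and $\phi(1/2) = 3^{2(n-1)}(1 - 3^{4}/2) - 1 < 0$. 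Differentiating gives
\[
\phi'(\delta) = \frac{4(n-1)\, r^{2n-3}}{(1-\delta)^2} - 3^{2(n+1)},
\]
and for $n \geq 2$ the quantities $r^{2n-3}$ and $(1-\delta)^{-2}$ are each non-decreasing in $\delta \in [0, 1/2]$ (the case $n = 1$ being trivial, as $r^{2(n-1)} = 1$). Hence $\phi'$ is monotone on $[0, 1/2]$, so changes sign at most once, and $\phi$ is either monotone or first decreases and then increases. Combined with $\phi(0) = 0$ and $\phi(1/2) < 0$, this forces $\phi(\delta) \leq \max\{\phi(0), \phi(1/2)\} = 0$ throughout $[0, 1/2]$. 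Chaining this with the singular-value bound above yields $|A|^n \leq r^{2(n-1)} |\det A| \leq (1 + 3^{2(n+1)}\delta)|\det A|$, as claimed.
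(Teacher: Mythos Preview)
Your proof is correct and follows essentially the same strategy as the paper: bound the Euclidean singular values of $A$ in $[r^{-1},r]$ with $r=(1+\delta)/(1-\delta)$ via the isometry identity $|Av|_{\gamma_2}=|v|_{\gamma_1}$, deduce $|A|^n/|\det A|\le r^{2(n-1)}$ (the paper gets the slightly weaker $r^{2n}$ by bounding $|A|$ and $|\det A|$ separately rather than forming the ratio), and then establish the elementary numerical inequality $r^{O(n)}\le 1+3^{2(n+1)}\delta$. The only real difference is in this last step: the paper rewrites $r^{2n}\le(1+4\delta)^{2n}$ and expands binomially, while you argue directly that $\phi(\delta)=r^{2(n-1)}-1-3^{2(n+1)}\delta$ is convex (your observation that $\phi'$ is non-decreasing) and hence attains its maximum at an endpoint, both of which are $\le 0$. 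Both arguments are short and elementary; yours is perhaps marginally cleaner in that it avoids the intermediate bound $r\le 1+4\delta$ and the binomial manipulation.
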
 

\begin{proof} First, 
\[
 |A| = \max_{|v| = 1} |Av| \leq (1+\delta)  \max_{|v|_{\gamma_1} =1}  |Av| \leq \frac{1+\delta}{1-\delta}  \max_{|v|_{\gamma_1}  = 1} |Av|_{\gamma_2} =  \frac{1+\delta}{1-\delta} . 
\]
Second, by a singular value decomposition of $A$, there exist orthonormal bases $(e_1, \ldots, e_n)$ and $(\eps_1, \ldots, \eps_n)$ of $\R^n$ together with real numbers $\mu_i > 0$ so  that 
\[
      A e_i = \mu_i \eps_i, \qquad  1 \leq i \leq n . 
\]
The numbers $\mu_i$ satisfy 
\[
    \mu_i = |Ae_i| = |e_i|_{\gamma_1} \cdot  \big| A \frac{e_i}{|e_i|_{\gamma_1}} \big| \geq \frac{1-\delta}{1+\delta} \cdot  \big| A \frac{e_i}{|e_i|_{\gamma_1}} \big|_{\gamma_2} =  \frac{1-\delta}{1+\delta}  
\]
so that 
\[
    |\det A| = \prod_{i=1}^n \mu_i \geq \left(  \frac{1-\delta}{1+\delta} \right)^n . 
\]
Altogether, using $\delta \leq 1/2$, 
\begin{align*} 
    |A|^n&  \leq \left(  \frac{1+\delta}{1-\delta} \right)^{n} \leq  \left(  \frac{1+\delta}{1-\delta} \right)^{2n} |\det A| \\
            & = \left( 1 +  \frac{ 2 \delta}{1-\delta} \right)^{2n} |\det A| \leq  ( 1 + 4 \delta)^{2n} |\det A|. 
\end{align*} 
The claim now follows from
\[
     ( 1 + 4\delta)^{2n} = \sum_{j=0}^{2n} \binom{2n}{j} (4 \delta)^j = 1 + 4
     \delta \sum_{j=1}^{2n} \binom{2n}{j} \underbrace{(4 \delta)^{j-1}}_{\le
       2^j}\cdot 1^{2n-j} \leq 1 + 4 \delta ( 2+1)^{2n} . 
\]
\end{proof} 

The following fact is close to \cite[Lemma 4.2]{LS16}. 

\begin{lem} \label{goodcharts} Let $n=\dim M \geq 2$ and $df$ be a  locally orientation preserving isometry a.e. 

Then, for each $\eps > 0$ and each $x \in M$, there exist local charts $(U,\phi)$ around $x$ and $(V,\psi)$ around $f(x) \in N$ such that $f(U) \subset V$ and the induced map  
\[
    \hat f = \psi \circ f \circ \phi^{-1} \colon \phi(U) \to \psi(V) \subset \R^n 
\]
is a  $(1+\eps)$-quasiregular Lipschitz map. 
\end{lem}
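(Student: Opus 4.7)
The plan is to exploit continuity of $g$ and $h$ together with the flexibility of linear changes of coordinates to find charts in which the metrics are arbitrarily close to the standard Euclidean inner product, and then invoke Lemma \ref{linalg} to convert the almost-everywhere isometry condition into a quasiregularity estimate.

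First, using the hypothesis that $df$ is a locally orientation preserving isometry a.e., fix oriented open neighborhoods $x \in U_0 \subset M$ and $f(x) \in V_0 \subset N$ on which $d_y f$ is an orientation preserving linear isometry for almost every $y$. Pick a smooth coordinate chart $\phi_0 \colon U_0 \to \R^n$ compatible with the orientation and sending $x$ to $0$; by composing with a suitable linear automorphism of $\R^n$, we may assume that the pushforward metric $(\phi_0^{-1})^* g$ equals the standard Euclidean inner product at the origin. Do the same for $\psi_0\colon V_0 \to \R^n$ around $f(x)$. Because $g$ and $h$ are continuous, for any prescribed $\delta \in (0, 1/2]$ we can, by shrinking $U_0$ and $V_0$ to smaller open neighborhoods $U$ and $V$, ensure that
\[
(1-\delta)|v| \leq |v|_{(\phi_0^{-1})^* g_y} \leq (1+\delta)|v|, \qquad (1-\delta)|v| \leq |v|_{(\psi_0^{-1})^* h_z} \leq (1+\delta)|v|
\]
for all $y \in \phi_0(U)$, $z \in \psi_0(V)$, and $v \in \R^n$. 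By continuity of $f$, after shrinking $U$ further we may also assume $f(U) \subset V$. Set $\phi = \phi_0|_U$, $\psi = \psi_0|_V$, and $\hat f = \psi \circ f \circ \phi^{-1}$; on the relatively compact chart domain $\phi(U)$, the map $\hat f$ is the composition of the locally Lipschitz map $f$ with smooth charts, hence is Lipschitz.

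At almost every $y \in \phi(U)$ at which $f$ (equivalently $\hat f$) is differentiable, the chain rule identifies $d_y \hat f$ with $d_{\phi^{-1}(y)} f$ after the coordinate identifications. Since the latter is an orientation preserving isometry from $(T_{\phi^{-1}(y)} M, g)$ to $(T_{f(\phi^{-1}(y))} N, h)$ and the charts are oriented, $d_y \hat f$ is an orientation preserving linear isometry from the inner product $(\phi_0^{-1})^* g_y$ on $\R^n$ to $(\psi_0^{-1})^* h_{\hat f(y)}$ on $\R^n$. Both of these inner products satisfy the hypotheses of Lemma \ref{linalg} with the chosen $\delta$, so
\[
|d_y \hat f|^n \leq \bigl(1 + 3^{2(n+1)} \delta \bigr) \det(d_y \hat f),
\]
where the determinant is non-negative because $d_y \hat f$ is orientation preserving. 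Choosing $\delta$ small enough that $3^{2(n+1)} \delta \leq \eps$ (and $\delta \leq 1/2$) gives the desired $(1+\eps)$-quasiregularity of $\hat f$.

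The only point that requires any care is the normalization step: producing a chart around a given point in which a merely continuous Riemannian metric is $\delta$-close to Euclidean on an entire neighborhood. This is straightforward once one performs a linear change of coordinates to make the metric agree with Euclidean at the base point, after which continuity gives control on a small enough neighborhood; no regularity of $g$ or $h$ beyond continuity is used.
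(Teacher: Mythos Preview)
Your proof is correct and follows essentially the same approach as the paper's: normalize the charts by linear (orientation preserving) changes of coordinates so that the pushed-forward metrics are Euclidean at the base points, use continuity to get $\delta$-closeness on small neighborhoods, and then apply Lemma~\ref{linalg} with $\delta = \eps / 3^{2(n+1)}$. The only cosmetic points are that you should explicitly take the normalizing linear automorphisms to be orientation preserving (so that the orientation-preserving property of $d_y\hat f$ survives), and arrange that $U$ has compact closure in the original chart domain before asserting that $\hat f$ is Lipschitz.
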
 

\begin{proof} 
By assumption made in \eqref{eq:setup_map}, we also have $\dim N = n$. 
Choose connected smooth local oriented charts $(U, \phi)$ around $x \in M$ and
$(V, \psi)$ around $y : = f(x) \in N$ with $f(U) \subset V$, and let $\hat f =
\psi \circ f \circ \phi^{-1} \colon \phi(U) \to \psi(V) \subset \R^n$ be the
induced map which we can assume to be Lipschitz by convention~\eqref{eq:setup_map}. 

Let $\hat g = (\phi^{-1})^*(g) $ and $\hat h = (\psi^{-1})^*( h)$ be the induced continuous Riemannian metrics on $\phi(U)\subset \R^n$ and $\psi(V)\subset \R^n$. 
We may assume that for almost all $\zeta \in \phi(U)$ where $\hat f$ is differentiable, the differential $d_\zeta \hat f \colon (\R^n,  \hat g_\zeta) \to (\R^n , \hat h_{\hat f(\zeta)})$ is an orientation preserving isometry. 

By composing $\phi$ and $\psi$ with linear  orientation preserving isomorphisms of $\R^n$, we may assume that $\hat g_{ \phi(x)  }$ and $\hat h_{\psi(y) }$ are equal to the  standard Euclidean scalar product. 
Since $\hat g$ and $\hat h$ are continuous,  this implies, possibly after passing to smaller $U$ and $V$, that with $\delta := \frac{\eps}{2^{2(n+1)}}$ we get  that
\begin{align*} 
     (1- \delta) |v| \leq |v|_{\hat g_\zeta } &  \leq (1+ \delta) |v| , \qquad
                                                \forall \zeta \in \phi(U),\;
                                                \forall v \in \R^n ; \\
     (1 - \delta) |v| \leq |v|_{\hat h_\xi } & \leq (1+ \delta) |v| , \qquad
                                               \forall \xi \in \psi(V), \; \forall v \in \R^n. 
\end{align*} 
By Lemma \ref{linalg} the assertion follows. 
\end{proof}

\begin{prop} \label{homeom} The map $f$ in Theorem \ref{essrigid} is a homeomorphism
\end{prop}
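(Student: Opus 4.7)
My plan is to show that $f$ is a local homeomorphism, then upgrade local homeomorphism together with properness to a covering map, and finally use simple connectedness of $N$ to conclude that $f$ is a homeomorphism.

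For the local homeomorphism step I would, at each point $x \in M$, invoke Lemma \ref{goodcharts} with $\eps < \eps(n)$, where $\eps(n)$ is the constant from Proposition \ref{localhomeo}. This produces oriented charts $(U, \phi)$ around $x$ and $(V, \psi)$ around $f(x)$ with $f(U) \subset V$ such that $\hat f = \psi \circ f \circ \phi^{-1}$ is a $(1+\eps)$-quasiregular Lipschitz map on $\phi(U)$. Because $df$ is a.e.\ an orientation preserving isometry, $d\hat f$ has full rank on a set of positive measure in every non-empty open subset of $\phi(U)$; in particular $\hat f$ is non-constant. Proposition \ref{localhomeo} then yields that $\hat f$ is a local homeomorphism, and pulling back via the charts, $f|_U$ is a local homeomorphism at $x$.

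Next I would pass to the global picture. Since $f$ is a local homeomorphism, $f(M)$ is open in $N$; since $f$ is proper and $N$ is locally compact Hausdorff, $f$ is also a closed map, so $f(M)$ is closed. Connectedness of $N$ forces $f(M) = N$. The fibres of $f$ are discrete (from the local homeomorphism property) and compact (from properness), hence finite. A standard argument then upgrades a proper local homeomorphism between manifolds to a covering map: shrinking local homeomorphism neighbourhoods around the finitely many preimages of a fixed $y \in N$ and using the Hausdorff property of $M$, one constructs an evenly covered neighbourhood of $y$. Because $N$ is simply connected and $M$ is connected, this covering must have a single sheet, so $f$ is a bijective local homeomorphism, hence a homeomorphism.

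The principal obstacle I anticipate is the first step, because Proposition \ref{localhomeo} is only available for $n \geq 3$. In dimension two, quasiregularity alone does not exclude local behaviour of the form $z \mapsto z^k$. To handle $n = 2$ I would exploit that, since Lemma \ref{goodcharts} provides a $(1+\eps)$-quasiregular representative for every $\eps > 0$, the map $\hat f$ is in fact $1$-quasiregular; combining the Sto\"ilow factorisation of such maps with the pointwise constraint $|d\hat f| = 1$ almost everywhere coming from the isometry hypothesis rules out branch points and restores local injectivity. Once this sharpening is in place, the remaining covering-space argument applies uniformly in all dimensions $n \geq 2$.
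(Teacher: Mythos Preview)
For $n \geq 3$ your argument is correct and coincides with the paper's: Lemma~\ref{goodcharts} together with Proposition~\ref{localhomeo} gives local injectivity, and properness plus simple connectedness of $N$ upgrade this to a homeomorphism.

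The gap is in your treatment of $n = 2$. Your claim that $\hat f$ is $1$-quasiregular does not follow from Lemma~\ref{goodcharts}: that lemma produces, for each $\eps > 0$, a \emph{different} pair of charts and hence a different local representative; no single $\hat f$ is $(1+\eps)$-quasiregular for all $\eps$, and in a fixed chart the dilatation is governed by how far the continuous metrics $g,h$ deviate from Euclidean there. Likewise the assertion $|d\hat f| = 1$ a.e.\ is not correct in chart coordinates: $d_x f$ is an isometry for $(g_x, h_{f(x)})$, not for the Euclidean inner products coming from the charts. A Sto\"ilow-based argument can in principle be pushed through (one would rule out branch points using that the Jacobian of $\hat f$ is a.e.\ bounded away from zero), but this needs control of the Jacobian of the quasiconformal factor which you have not supplied.

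The paper sidesteps the two-dimensional case with a dimension-raising trick: for $n \leq 2$ apply the already-established case $n \geq 3$ to the product map $f \times \id_\R \colon (M \times \R, g + dt^2) \to (N \times \R, h + dt^2)$, which is still proper with simply connected target and whose differential is still a locally orientation preserving isometry a.e.; once $f \times \id_\R$ is a homeomorphism, so is $f$.
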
 

\begin{proof} For $n = \dim( M) \geq 3$,  Proposition \ref{localhomeo} and  Lemma \ref{goodcharts}  imply that $f$ is a local homeomorphism and hence a covering map since $f$ is proper. 
Since $f$ induces a surjective map $\pi_1(M) \to \pi_1(N)$,  this shows that $f$ is a homeomorphism.

For $n = 2$ we consider the Riemannian products $(M \times \R, g + dt^2)$ and $(N \times \R, h + dt^2)$ where $dt^2$ is the standard Riemannian metric on $\R$ and observe that the differential of the locally Lipschitz map 
\[
   f \times \id \colon (M \times \R , d_{g +  dt^2}) \to (N \times \R, d_{h +  dt^2})
\]
is a locally orientation preserving isometry a.e. 
Hence $f \times \id$ is a homeomorphism and the same then holds for $f$. 
For $n = 1$ a similar trick applies. 
\end{proof}

With Proposition \ref{homeom},  the proof of Theorem \ref{essrigid} is
completed by the following.

\begin{prop} \label{isomhomeo} 
Let $f$ be a homeomorphism and let $df$ be an isometry a.e. 

Then $f$ is a metric isometry. 
\end{prop}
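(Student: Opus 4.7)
The goal is to prove the metric isometry by establishing both $d_h(f(x),f(y)) \le d_g(x,y)$ and $d_g(x,y) \le d_h(f(x),f(y))$ for all $x,y \in M$. The plan is to show that $f$ and $f^{-1}$ are each locally $1$-Lipschitz, after which both inequalities follow by taking infima over path lengths.

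To see that $f$ is locally $1$-Lipschitz, I would work in smooth local charts chosen so that both $g$ and $h$ are $(1+\varepsilon)$-close to the Euclidean metric. The isometry hypothesis on $d_xf$ gives $|d_xf|_{\mathrm{op}} \le 1$ at a.e.\ point where $f$ is differentiable, and the ACL (absolute continuity on lines) property of the Lipschitz map $f$ then permits one to integrate the differential along coordinate segments in the chart and conclude that $f$ is $(1+\varepsilon)$-Lipschitz on the chart. Interpreting this in the Riemannian metrics and letting $\varepsilon \to 0$ through a cover of $M$ yields the first inequality after taking infima over absolutely continuous curves, paralleling the length-estimation trick used in the proof of Proposition~\ref{Lipschitzlength}.

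For the reverse inequality I would show that $f^{-1}$ is also locally $1$-Lipschitz. The key intermediate step is that $f^{-1}$ is locally Lipschitz at all. Since $d_xf$ is a linear isometry a.e., we have $|\det d_xf| = 1$ a.e., and the area formula for the Lipschitz map $f$ combined with the fact that $f$ is a homeomorphism (so its multiplicity function is identically $1$) gives local volume preservation $\mathrm{vol}_h(f(E)) = \mathrm{vol}_g(E)$. To upgrade this to local Lipschitz control of $f^{-1}$, I would invoke Lemma~\ref{goodcharts}, which represents $f$ in suitable charts as a $(1+\varepsilon)$-quasiregular Lipschitz homeomorphism for every $\varepsilon>0$; Proposition~\ref{localhomeo} together with the classical fact that a quasiconformal homeomorphism has a quasiconformal inverse (with the same constant) then yields the desired bi-Lipschitz control as $\varepsilon \to 0$. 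Once $f^{-1}$ is locally Lipschitz it is differentiable a.e.\ by Rademacher; by the Lusin $N$-property of the Lipschitz map $f$, at almost every $y = f(x) \in N$ both $f$ and $f^{-1}$ are differentiable, so the chain rule forces $d_y(f^{-1}) = (d_xf)^{-1}$, which is again a linear isometry. Applying the argument of the previous paragraph to $f^{-1}$ now yields $d_g(x,y) \le d_h(f(x),f(y))$.

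The main obstacle is the step that upgrades ``$f$ is a Lipschitz homeomorphism with $|\det d_xf| = 1$ a.e.'' to ``$f^{-1}$ is locally Lipschitz''. This is precisely where the homeomorphism hypothesis and the sharp isometry condition on $d_xf$ must both be used in full strength, and where the quasiregular-map machinery established in Section~\ref{rigess} (especially Lemma~\ref{goodcharts} and Proposition~\ref{localhomeo}) appears to be essential; a direct proof via volume comparison of small balls alone seems not to suffice because $1$-Lipschitz plus volume preservation does not formally imply bi-Lipschitz without further input.
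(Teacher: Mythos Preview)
Your overall strategy—establishing both $d_h(f(x),f(y)) \le d_g(x,y)$ and its reverse by controlling $f$ and $f^{-1}$ separately—matches the paper's. For the forward direction the paper uses essentially the curve-integration idea you sketch, made precise via a family-of-curves Fubini argument: embed a smooth $\gamma$ from $x$ to $y$ in a parallel family $\gamma_\tau$ and choose $\tau$ so that $d_pf$ is an isometry for a.e.\ $p \in \im(\gamma_\tau)$, then integrate.

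The genuine gap is in your treatment of $f^{-1}$. The assertion that a quasiconformal homeomorphism has a quasiconformal inverse with the same constant is correct, but the conclusion that this ``yields the desired bi-Lipschitz control as $\varepsilon \to 0$'' is not: $(1+\varepsilon)$-quasiconformality, for however small $\varepsilon$, carries \emph{no} Lipschitz bound whatsoever. Radial maps $x \mapsto x|x|^\alpha$ with small $\alpha>0$ are arbitrarily close to $1$-quasiconformal yet fail to be bi-Lipschitz at the origin. Proposition~\ref{localhomeo} is also not the relevant tool here, since $f$ is already a global homeomorphism by hypothesis.

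The paper deliberately does \emph{not} prove that $f^{-1}$ is Lipschitz; it remarks explicitly that ``we cannot assume at this point that $f^{-1}$ is Lipschitz''. Instead it invokes \cite[Corollary~II.6.5]{Rick93} to conclude that $\hat f^{-1}$ is quasiregular, hence ACL. Given a smooth curve $\gamma$ in $N$ joining $f(x)$ to $f(y)$, it embeds $\gamma$ in a parallel family $\gamma_\tau$ and applies Fubini twice: once so that $f^{-1}\circ\gamma_\tau$ is absolutely continuous (from ACL), and once so that $\im(\gamma_\tau)\cap f(M\setminus M_{\rm reg})$ is null. Along such a $\gamma_\tau$ one integrates $|d_qf^{-1}|=1$ to obtain $d_g(x,y)\le \ell_h(\gamma_\tau)$, and concludes by continuity in $\tau$.

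Your route can in fact be salvaged, but by a different mechanism than the one you state. Once $\hat f^{-1}$ is known to be quasiregular it lies in $W^{1,n}_{\rm loc}$, and its weak derivative coincides a.e.\ with the pointwise derivative $(d_xf)^{-1}$; the latter is an isometry in the $(g,h)$-metrics and hence uniformly bounded in the charts of Lemma~\ref{goodcharts}. This places $\hat f^{-1}$ in $W^{1,\infty}_{\rm loc}$, i.e.\ makes it locally Lipschitz, after which your symmetric argument applies. The point is that the Lipschitz control comes from the $W^{1,n}$-regularity of quasiregular maps combined with the a.e.\ isometry hypothesis on $df$, not from the smallness of the quasiconformal distortion.
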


\begin{proof} 
We need to show that for all $x, y \in M$ we have 
 \begin{align} 
   \label{lessthan} d_g(x, y) & \leq d_{h}(f(x) ,f(y)) , \\ 
  \label{largerthan}  d_g(x,y) & \geq d_{h}(f(x), f(y)) . 
\end{align} 
Since the local homological mapping degree of $f$ is locally constant, we may assume that the differential $df$ is an orientation preserving isometry a.e. 
Then 
\[
    \Mr := \{ x \in M \mid  \text{f differentiable at $x$ and $d_x f$ is orientation preserving isometry}\} 
\]
is a subset of $M$ of full measure and contained in the subset of $M$ where $f$ is differentiable. 
In particular, since $f$ is Lipschitz, the image  $f(M \setminus \Mr) \subset N$ has measure zero, compare  \cite[Lemma  4.1]{Pou77}.

Let $p \in \Mr$ and set $q=f(p)\in N$. 
Then the map $f^{-1}$ is differentiable at $q$ with differential $(d_p f)^{-1}$. 
 It follows that $f^{-1} \colon N \to M$ is almost everywhere differentiable and at almost all $q \in N$ where $f^{-1}$ is differentiable its differential $d_q f^{-1}$ is an isometry. 
Note that we cannot assume at this point that $f^{-1}$ is Lipschitz. 

Given $\eta > 0$, there exists a regular smooth curve $\gamma \colon [0,1] \to N$ joining $f(x)$ and $f(y)$ for which 
 \begin{equation*} \label{aprox} 
      \ell_{h}(\gamma) \leq d_{h}(f(x) ,f(y) ) + \eta. 
 \end{equation*} 
We claim that 
\begin{equation} \label{preimage} 
    d_g( x, y ) \leq \ell_{h} (\gamma). 
\end{equation} 
This implies \eqref{lessthan} by letting $\eta$ go to zero. 

To show \eqref{preimage} we may (after subdividing $\gamma$) assume that  $\gamma \colon [0,1] \to N$ is an embedded smooth curve and that there exist local charts $(U,\phi)$ on $M$ and $(V, \psi)$ on $N$ such that $\im (\gamma) \subset V$ and 
\[
     \psi \circ \gamma (t) = (t, 0, \ldots, 0) . 
\]
Furthermore, by Proposition \ref{goodcharts}  we may assume that $\hat f \colon \phi(U) \to \psi(V)$ is quasiregular. 
By \cite[Corollary II.6.5]{Rick93} the inverse $\hat f^{-1} \colon \psi(V) \to \phi(U)$ is also quasiregular. 

Choose $\delta > 0$ such that $[0,1]  \times (-\delta, \delta)^{n-1} \subset \psi(V)$ and for $\tau \in 0 \times (-\delta, \delta)^{n-1}$ and $t \in [0,1]$ set 
\[
  \gamma_{\tau} (t) = \psi^{-1} \left( (t, 0, \ldots, 0)  + \tau\right) \in V . 
\] 
Note that  $\gamma_0 = \gamma$. 
Since  $\hat f^{-1} \colon \psi(V) \to \phi(U)$ is quasiregular, it is absolutely continuous on lines, see \cite[page 5]{Rick93}, and hence $f^{-1} \circ  \gamma_\tau  \colon [0,1] \to U \subset M$ is absolutely continuous for almost all $\tau$. 
Since $d_g \colon M \times M \to \R_{\geq 0}$ is continuous and $\ell_{h}(\gamma_\tau)$ depends continuously on $\tau$, we may therefore assume for showing \eqref{preimage} that  $ f^{-1}  \circ \gamma \colon [0,1] \to U$ is absolutely continuous.

Furthermore, since $f(M \setminus \Mr) \subset N$ has measure zero, the intersection $\im (\gamma_\tau) \cap f(M \setminus \Mr)$ has measure zero for almost all $\tau$ by the Fubini theorem (for the $1$-dimensional Lebesgue measure on the $1$-dimensional submanifold $\im ( \gamma_\tau) \subset N$). 
By a similar continuity argument as before, we may therefore assume  that $\im (\gamma) \cap f(M \setminus \Mr)$ has measure zero. 
In particular, $d_q f^{-1}$ exists and is an isometry for almost all $q \in \im (\gamma)$. 

Under these assumptions, we obtain 
\[
 d_g( x , y)  \leq  \ell_g( f^{-1}  \circ \gamma)  = \int_0^1  | (f^{-1}  \circ \gamma)'(t) |_g dt = \int_0^1 | \gamma'(t)|_{h}  dt = \ell_{h} (\gamma) . 
\]
This verifies \eqref{preimage} and finishes the proof of \eqref{lessthan}. 

The proof of \eqref{largerthan} is analogous and in fact easier since $f$ is Lipschitz by assumption. 
Assume that  $\gamma \colon [0,1] \to M$ is an embedded smooth curve joining $x$ and $y$ and there are local charts  $(U,\phi)$ on $M$ and $(V, \psi)$ on $N$ such that $\im (\gamma) \subset U$ and 
\[
     \phi \circ \gamma (t) = (t, 0, \ldots, 0) . 
\]
It is enough to show that under theses circumstance we get 
\begin{equation} \label{distanceimage} 
     d_{h} (f(x) , f(y) ) \leq \ell_g(\gamma). 
\end{equation} 

Since $f$ is Lipschitz, $f \circ \gamma \colon [0,1] \to V$ is absolutely continuous. 
By a similar argument as for  \eqref{lessthan}, considering a family of curves $\gamma_\tau (t) = \phi^{-1}( (t, 0, \ldots, 0) + \tau)$, we may assume that for almost all $p\in \im ( \gamma) $ the map $f$ is differentiable at $p$ and $d_p f$ is an isometry. 
This implies \eqref{distanceimage} and completes the proof of \eqref{largerthan}.

\end{proof}

\begin{rem} Assume that $f$ is proper and a locally orientation preserving isometry a.e., but $f$ does not necessarily induce a surjective map $\pi_1(M) \to \pi_1(N)$. 
Let $\pi \colon \tilde N \to N$ be the universal covering of $N$ and let $\tilde f \colon \tilde M \to \tilde N$ be the pull back of $f$ along $\pi$, which is still proper. 
With the continuous Riemannian metric $\tilde g$ and $\tilde h$ induced from $g$ and $h$, respectively, the map $\tilde f$ is a locally orientation preserving isometry a.e. 

Theorem \ref{essrigid} implies that the map  $\tilde f \colon (\tilde M , d_{\tilde g}) \to (\tilde N, d_{\tilde h})$ is a metric isometry. 
Hence the original map $f \colon (M, d_g) \to (N, d_h)$ is a locally isometric covering map. 
\end{rem}

\section{Distributional scalar curvature}\label{sec:distrib_scal}
Let $M$ be a smooth manifold of dimension $n$. 
Given a real or complex Lipschitz vector bundle $E \to M$, we denote by $\Liploc(M,E)$ the vector space of locally Lipschitz continuous sections of $E$ and by $\Lloc^p(M,E)$ the vector space of 
\begin{enumerate}[\myicon]  
\item  locally $p$-integrable sections of $E$ if $1 \leq p < \infty$, 
\item  locally essentially bounded integrable sections of $E$  if $p = \infty$. 
\end{enumerate} 
These spaces are defined with respect to some, hence any, bundle metric on $E$ and continuous volume density on $M$.  

\begin{dfn} \label{connection} 
An \emph{affine connection} on $E$ is a  linear map
\[
 \nabla \colon   \Liploc(M,E)   \longrightarrow \Lloc^2(M, T^* M \otimes E)  
\]
which  for all scalar valued Lipschitz functions $f$ on $M$ satisfies the Leibniz rule 
\[
    \nabla ( f \eta) = df \otimes \eta + f \,  \nabla \eta. 
\]
Let $ \langle - ,  - \rangle$ be a Euclidean (if $E$ is real) or Hermitian (if
$E$ is complex) bundle metric on $E$. 
We say that a connection  $\nabla$ on $E$ is  \emph{metric} if, for all $\eta_1, \eta_2 \in \Liploc(M,E)$, we get 
\begin{equation} \label{metric} 
    d  \left<\eta_1,\eta_2\right>=\left< \nabla   \eta_1,\eta _2\right>+\left<\eta_1,\nabla  \eta_2\right> . 
\end{equation}
\end{dfn} 

Working in local Lipschitz frames of $E$, each affine connection $\nabla$ uniquely extends to a linear map $ \nabla \colon \Wloc^{1,2}(M,E) \to \Lloc^2(M, T^*M \otimes E)$ satisfying the  Leibniz rule. 
If the given connection $\nabla$ is metric, then \eqref{metric} continues to hold for $\eta_1, \eta_2 \in \Wloc^{1,2}(M,E)$. 
Furthermore, the usual partition of unity argument shows  that on each Lipschitz bundle $E \to M$ with bundle metric there exists a metric connection. 

\begin{exa} \label{Levi} 
Let $g$ be an admissible Riemannian metric of regularity $\Wloc^{1,p}$, $p > n$, on $M$. 
With respect to $C^1$-regular local coordinates $(x^1, \ldots, x^n)$ on   $U \subset M$, the metric $g$ has Christoffel symbols 
\[
   \Gamma^k_{ij} = \frac{1}{2} g^{k\ell} ( \partial_i g_{\ell j} + \partial_j g_{i\ell} - \partial_\ell g_{ij})  \in \Lloc^p(U) , \qquad 1 \leq i,j, k \leq n. 
\] 
Here, as usual, $(g^{k\ell})$ is the inverse of the metric tensor $(g_{ij})$, and we sum over the index $\ell$ according to the Einstein summation convention. 

Using the multiplication $\Liploc(U, \R) \times \Lloc^p(U) \to \Lloc^2(U)$  and the inclusion $\Lloc^{\infty}(U) \subset \Lloc^2(U)$, we obtain the  {\em Levi-Civita connection}
\[
   \nabla_g (f^j \partial_j) = dx^i \otimes  \big( \partial_i f^k  + f^j \Gamma_{ij}^k  \big) \partial_k 
\] 
which is metric with respect to $g$.

\end{exa} 

The scalar curvature of metrics of regularity less than $C^2$   cannot be defined in the usual way. 
However, it can be defined as a distribution as follows; compare \cite[Section 2.1]{LL15}.
Let $(x^1, \ldots, x^n)$ be smooth local coordinates on $U \subset M$. 
Recall that for $C^2$-regular $g$, the Riemannian curvature tensor on $U$ is written in terms of Christoffel symbols as 
\[
  \displaystyle {R_{ijk}}^{\ell}= \partial_j  \Gamma _{ik}^{\ell} - \partial_i \Gamma _{jk}^{\ell}+  \Gamma _{ik}^{p}\Gamma _{jp}^{\ell}-\Gamma _{jk}^{p}\Gamma _{ip}^{\ell}  .
\]
Hence the scalar curvature of $g$ has the local expression
\begin{equation} \label{scalar}
    \scal_g|_U  = \partial_k  V^k + F 
\end{equation} 
where $V^k$ and $F$ are the smooth functions on $U$ defined by 
\begin{align} 
    \label{V}   V^k  & \coloneqq    g^{ij} \Gamma^{k}_{ij} - g^{ik} \Gamma^{j}_{ji} ,  \\
     \label{F}   F  & \coloneqq    -  ( \partial_k g^{ij})  \, \Gamma^{k}_{ij} +(  \partial_k g^{ik} ) \, \Gamma^j_{ji} + g^{ij} \left( \Gamma^{k}_{k \ell} \Gamma^{\ell}_{ij} - \Gamma^k_{j \ell} \Gamma^{\ell}_{ik} \right) . 
\end{align} 
Note that in  \eqref{scalar} the second order derivatives of the metric tensor occur only linearly, while its first order derivatives occur quadratically.

Let $d\mu_g$ be the Riemannian volume element of $g$ which locally on $U$, we write in the form $d\mu_g = \sqrt{ \det(g_{ij})} dx$. 
 For $u \in C^{\infty}_c(U)$, we hence obtain, using integration by parts, 
\begin{equation} \label{defscal} 
    \int_M\scal_gu\,d\mu_g = \int_U \bigl(- V^k \, \partial_k \bigl( u \,  \sqrt{\det(g_{ij})}\bigr)  + F  \, u \,  \sqrt{\det (g_{ij})} \bigr) dx . 
\end{equation} 
In particular, the right hand integral is independent of  the choice of smooth local coordinates on $U$. 

Now let $g$ be admissible of regularity $\Wloc^{1,p}$, $p > n$.  
Then in \eqref{V} and \eqref{F} we have $V^k  \in \Lloc^{2}(U)$ and $F \in \Lloc^{1} (U)$. 
Furthermore, since $g$ is of regularity $\Wloc^{1,p}$, $\det(g_{ij}) > 0$ and $\sqrt{-} \colon (0,\infty) \to (0, \infty)$ is smooth with locally bounded derivative, we obtain $\sqrt{\det(g_{ij})} \in \Wloc^{1,p}(U)$ by the chain rule for Sobolev functions. 
Hence  the right hand side of \eqref{defscal} is still defined. 
Obviously, for fixed $u$ it is continuous in $g$ in the $\Wloc^{1,p}$-topology, which we will use frequently in the sequel. 
For example,  since any   $\Wloc^{1,p}$-regular metric can be approximated by $C^2$-regular metrics in the $\Wloc^{1,p}$-topology, the right hand side of \eqref{defscal} is independent of the choice of smooth local coordinates.

By a partition of unity on $M$, we hence obtain a linear functional
\begin{equation*}
\scal_g \colon C^{\infty}_c(M) \to \R
\end{equation*}
 which is uniquely determined by 
 \begin{equation*}\label{eq:distributional_scal_g}
    \llangle \scal_g,u\rrangle = \int_U \bigl(- V^k \, \partial_k \bigl( u \, \sqrt{\det(g_{ij})}\bigr)  + F \, u \, \sqrt{\det (g_{ij})} \bigr)  dx  
\end{equation*} 
 if $u$ is supported in a coordinate neighborhood $U \subset M$ with smooth  coordinates $(x^1, \ldots, x^n)$. 

\begin{dfn}  The functional $\scal_g \colon C_c^{\infty}(M) \to \R$ is called  the {\em scalar curvature distribution} of $g$. 
\end{dfn}

\begin{rem} Our definition of the distribution $\scal_g$ coincides with \cite[Definition 2.1]{LL15} which uses a smooth background metric on $M$ instead of local coordinates. 
\end{rem}

The definition of $\llangle \scal_g, u \rrangle$ easily extends to  Lipschitz functions $u \in \Lip_c(M, \CC)$ and then defines a distribution with  values in $\CC$. 
This implies that if $E \to M$ is a complex Lipschitz bundle with Hermitian inner product $\langle - , - \rangle$, we obtain a sesquilinear  functional 
\[
 \mathscr{S}_g \colon \Lip_c(M,E) \times \Lip_c(M,E) \to \CC, \qquad \mathscr{S}_g (  \eta_1, \eta_2) :=  \llangle \scal_g, \langle \eta_1, \eta_2 \rangle  \rrangle. 
\]

\begin{prop}\label{cor:scal_on_sections}
The  scalar curvature distribution  $\mathscr{S}_g$ extends to a continuous sesquilinear functional 
\[
   \mathscr{S}_g \colon \Wloc^{1,2}(M , E) \times \Wloc^{1,2} (M,E)  \to \CC . 
\]
\end{prop}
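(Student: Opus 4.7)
The plan is to reduce to a local estimate via a partition of unity, and then obtain the bound from H\"older's inequality combined with Sobolev embedding. Cover $M$ by smooth coordinate charts $(U,x^1,\ldots,x^n)$ over which $E$ admits a Lipschitz trivialization, and first consider $\eta_1,\eta_2 \in \Lip_c(U,E)$. Expanding $\mathscr{S}_g(\eta_1,\eta_2) = \llangle \scal_g, \langle \eta_1,\eta_2\rangle\rrangle$ via \eqref{defscal} together with the product rule
$$\partial_k \langle \eta_1,\eta_2\rangle = \langle \partial_k \eta_1,\eta_2\rangle + \langle \eta_1,\partial_k\eta_2\rangle,$$
which, once the bundle is trivialized, is the standard product rule for $\CC^r$-valued Sobolev functions, produces the key identity
$$\mathscr{S}_g(\eta_1,\eta_2) = \int_U \bigl(-V^k\langle\partial_k\eta_1,\eta_2\rangle - V^k\langle\eta_1,\partial_k\eta_2\rangle + F\langle\eta_1,\eta_2\rangle\bigr)\, d\mu_g.$$

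From the defining formulas \eqref{V} and \eqref{F}, the regularity $g \in \Wloc^{1,p}$ with $p>n$ gives $V^k \in \Lloc^p(U)$ and $F \in \Lloc^{p/2}(U)$. Setting $q := 2p/(p-2)$, the identities $\frac{1}{p}+\frac{1}{2}+\frac{1}{q}=1$ and $\frac{2}{p}+\frac{2}{q}=1$ hold, and the strict inequality $p>n$ is equivalent to $q<2^*=2n/(n-2)$ for $n\geq 3$ (and the analogous Sobolev embedding is even stronger for $n\leq 2$), so $\Wloc^{1,2}\hookrightarrow \Lloc^q$. Applying H\"older with exponents $(p,2,q)$ to the first two integrands and $(p/2,q,q)$ to the third yields, on any precompact $U$, an estimate of the form
$$|\mathscr{S}_g(\eta_1,\eta_2)| \leq C_U\bigl(\|V\|_{L^p(U)} + \|F\|_{L^{p/2}(U)}\bigr)\|\eta_1\|_{W^{1,2}(U)}\|\eta_2\|_{W^{1,2}(U)}$$
for compactly supported Lipschitz sections over $U$.

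Since $\Lip_c(U,E)$ is dense in the space of compactly supported $\Wloc^{1,2}$ sections of $E$ over $U$ (by mollification in a Lipschitz trivialization), the sesquilinear functional $\mathscr{S}_g$ extends uniquely by continuity, and a partition-of-unity summation delivers the global statement. I expect the only real subtlety to lie in verifying that the product rule for $\langle \eta_1,\eta_2\rangle$ genuinely extends to $\Wloc^{1,2}$ sections of a Lipschitz bundle: this is where the Lipschitz (rather than merely continuous) regularity of $E$ is indispensable, as one needs the bundle metric to be a Lipschitz bilinear form in local trivializations before invoking the scalar Sobolev product rule. Everything else is routine bookkeeping of integrability exponents, made to work precisely by the strict inequality $p>n$.
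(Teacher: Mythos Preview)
Your proposal is correct and follows essentially the same route as the paper: reduce to the local expression \eqref{defscal}, control the $V^k$-term and the $F$-term separately via H\"older, and close the estimate with the Sobolev embedding $W^{1,2}\hookrightarrow L^q$. The only noteworthy difference is bookkeeping: the paper first packages $d\langle\eta_1,\eta_2\rangle$ into $L^{n/(n-1)}_{\mathrm{loc}}$ (using a metric connection $\nabla^E$ rather than a local trivialization) and then pairs it with the \emph{minimal} integrability $V^k\in L^n_{\mathrm{loc}}$, $F\in L^{n/2}_{\mathrm{loc}}$ and the critical Sobolev exponent $2^*$, whereas you use the full $L^p$, $L^{p/2}$ regularity of $V^k$, $F$ together with the subcritical exponent $q=2p/(p-2)<2^*$; both choices work and are equivalent in spirit.
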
 

\begin{proof} Let $g$ be of regularity $\Wloc^{1,p}$, $p > n$. 
Let $\eta , \theta \in \Wloc^{1,2}(M,E)$.
Then $\langle \eta , \theta \rangle  \in  \Lloc^{\frac{n}{n-2}}(M)$ for $n \geq 3$ and  $\langle \eta , \theta \rangle   \in \Lloc^{\frac{p}{p-2}}(M)$ for $n = 2$  by the Sobolev embedding theorem. 
Pick a metric connection $\nabla^E$ on $E$. 
We then obtain 
\[
  d  \langle \eta , \theta \rangle =\left<\nabla^E \eta , \theta \right> + \langle \eta , \nabla^E \theta \rangle  \in \Lloc^{\frac{n}{n-1}}(M  ,T^*M )
\]
by the H\"older inequality. 
  
Moreover, if $(\eta_i)$ and $(\theta_i)$ are  sequences  in $\Wloc^{1,2}(M,E)$ converging to $\eta$ and $\theta$ in $\Wloc^{1,2}(M,E)$, then  $\langle \eta_i , \theta_i \rangle  \to \langle \eta ,\theta \rangle$ in $\Lloc^{\frac{n}{n-2}}(M)$  for $n\geq 3$, respectively in $\Lloc^{\frac{p}{p-2}}(M)$ for $n =2$, and hence $d \langle \eta_i, \theta_i \rangle \to d \langle \eta, \theta \rangle$ in $\Lloc^{\frac{n}{n-1}}(M, T^* M)$ by a similar argument. 
 
 Writing the right hand integrand in \eqref{defscal} in the form 
 \[
  - V^k \cdot \partial_k u \cdot \sqrt{\det(g_{ij})}   - V^k \cdot u \cdot \partial_k \sqrt{\det(g_{ij})}   + F \cdot u  \cdot \sqrt{\det (g_{ij})}
 \]
for $u := \langle \eta, \theta \rangle$,  the claim now follows from the H\"older inequality. 
Indeed, since $\sqrt{\det(g_{ij})} \in \Wloc^{1,p}(U)$, we can argue as follows: 
 \begin{enumerate}[\myicon]
    \item $V^k \in \Lloc^{n}(U)$, $\partial_k u \in \Lloc^{\frac{n}{n-1}}(U)$, $\sqrt{\det(g_{ij})} \in C^0(U)$  and $\frac{1}{n} + \frac{n-1}{n} = 1$, 
    \item $V^k \in \Lloc^{n}(U)$, $u \in \Lloc^{\frac{n}{n-2}}(U)$, $\partial_k \sqrt{\det(g_{ij})} \in \Lloc^{n}(U)$  and $\frac{1}{n} + \frac{n-2}{n} + \frac{1}{n} = 1$ for $n \geq 3$,
    \item $V^k \in \Lloc^{p}(U)$, $u \in \Lloc^{\frac{p}{p-2}}(U)$, $\partial_k \sqrt{\det(g_{ij})} \in \Lloc^{p}(U)$  and $\frac{1}{p} + \frac{p-2}{p} + \frac{1}{p} = 1$ for $n = 2$,
        \item $F \in \Lloc^{n/2} (U)$, $u \in \Lloc^{\frac{n}{n-2}}(U)$, $\sqrt{\det(g_{ij})} \in C^0(U)$ and $\frac{2}{n} + \frac{n-2}{n} = 1$ for $n \geq 3$,
        \item $F \in \Lloc^{p/2}(U)$ , $u  \in \Lloc^{\frac{p}{p-2}}(U)$, $\sqrt{\det(g_{ij})} \in C^0(U)$ and $\frac{2}{p} + \frac{p-2}{p} = 1$ for $n=2$. 
\end{enumerate} 
\end{proof}

We finally show that the scalar curvature distribution is invariant under metric isometries.

\begin{prop} \label{Holder} Let $M$ and $N$ be smooth manifolds of dimension $n$ and equipped with admissible Riemannian metrics $g$ and $h$ of regularity $\Wloc^{1,p}$, $p > n$. 
Let $f \colon (M, d_g) \to (N, d_h)$ be a metric isometry. 

Then  $\scal_g = f^* \scal_h$ in the sense that, for all $u \in C^{\infty}_c(N)$, we have
\[
    \llangle \scal_g , u \circ f \rrangle = \llangle \scal_h , u \rrangle. 
\]
\end{prop}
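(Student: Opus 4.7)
The plan is to reduce the statement to the classical naturality of scalar curvature under smooth Riemannian isometries via a coupled approximation argument, exploiting the pointwise identification of the two metrics provided by the fact that $f$ has isometric differential almost everywhere.

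First, I apply Proposition \ref{Lipschitzlength_neu} to $f$: since $f$ is a metric isometry, the differential $df$ is a locally orientation-preserving isometry a.e. In any pair of smooth coordinate charts $(U, x^i)$ on $M$ and $(V, y^j)$ on $N$ with $f(U) \subset V$, this yields the pointwise identity
\[
  g_{ij}(x) = h_{kl}(f(x)) \, \partial_i f^k(x) \, \partial_j f^l(x)  \quad \text{for a.e. } x \in U.
\]
Taking determinants and applying the bi-Lipschitz change of variables formula gives $\int_U \phi \, d\mu_g = \int_V (\phi \circ f^{-1}) \, d\mu_h$ for integrable $\phi$. Using a partition of unity on $N$ subordinate to a coordinate atlas, it suffices to prove the claimed identity locally for $u \in C_c^{\infty}(V)$ supported in a single chart.

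Next, I smoothly approximate $h$ by mollifications in coordinates, obtaining smooth $h_\eps \to h$ in $\Wloc^{1,p}(V)$. By the continuity of $g \mapsto \llangle \scal_g, u \rrangle$ in the $\Wloc^{1,p}$-topology at fixed Lipschitz test function (recorded directly after the definition of $\scal_g$), one has $\llangle \scal_{h_\eps}, u\rrangle \to \llangle \scal_h, u\rrangle$. Analogously approximate $f$ by smooth maps $f_\eps \colon U \to V$ with $f_\eps \to f$ uniformly and $df_\eps \to df$ in $\Lloc^p$, and set $g_\eps \defeq f_\eps^* h_\eps$. For the smooth metrics $g_\eps, h_\eps$ and the smooth diffeomorphism $f_\eps$, the classical naturality of scalar curvature under smooth Riemannian isometries combined with the smooth change of variables formula yields
\[
  \int_U \scal_{g_\eps} \cdot (u \circ f_\eps) \, d\mu_{g_\eps} \; = \; \int_V \scal_{h_\eps} \cdot u \, d\mu_{h_\eps}.
\]
Passing to the limit $\eps \to 0$, the right-hand side tends to $\llangle \scal_h, u\rrangle$, so the remaining task is to show that the left-hand side converges to $\llangle \scal_g, u \circ f \rrangle$.

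The main obstacle is establishing $g_\eps \to g$ in $\Wloc^{1,p}(U)$. This is delicate, because the distributional derivatives of $g_\eps = f_\eps^* h_\eps$ contain terms with second derivatives of $f_\eps$, which generally blow up under mollification of a merely Lipschitz map. The cancellation needed in the limit is forced by the pointwise identity $g = f^*h$ a.e., which guarantees $g \in \Wloc^{1,p}$, but making the limit procedure rigorous requires either (i) choosing the approximating family $(f_\eps, h_\eps)$ in a coupled manner, for instance through a short-time smoothing flow that preserves the isometry, or (ii) first invoking an elliptic regularity theorem upgrading $f$ from Lipschitz to $\Wloc^{2,p}$ as a metric isometry between $\Wloc^{1,p}$ metrics with $p > n$ (in the spirit of Calabi-Hartman, extended to the Sobolev regime as in the setting of \cite{KMS19}), after which a direct mollification of $f$ produces $df_\eps$ converging in $\Wloc^{1,p}$ and hence $g_\eps \to g$ in $\Wloc^{1,p}$. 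With this convergence in hand, combined with $u \circ f_\eps \to u \circ f$ uniformly with derivatives converging in $\Lloc^p$, the continuity of $\scal$ yields the desired identity.
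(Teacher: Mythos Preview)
Your outline is the same as the paper's: reduce to smooth naturality by approximating both the metric and the map, and use the $\Wloc^{1,p}$-continuity of $g\mapsto\llangle\scal_g,\cdot\rrangle$. You also correctly isolate the only real difficulty, namely that $g_\eps=f_\eps^*h_\eps\to g$ in $\Wloc^{1,p}$ requires control of the \emph{second} derivatives of the approximating maps $f_\eps$, which a bare mollification of a Lipschitz map does not provide.

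The gap is that you do not actually supply this control. Your option (i), a ``short-time smoothing flow preserving the isometry'', is not a proof: no such flow is available off the shelf for $\Wloc^{1,p}$ initial metrics, and making one would be far harder than the proposition itself. Your option (ii) is the right one and is precisely what the paper does, but you leave it as a citation. The paper's argument is concrete and short: choose local \emph{harmonic} coordinates $(u^1,\dots,u^n)$ on $N$; since $h\in\Wloc^{1,p}$, elliptic regularity for the Laplace--Beltrami equation gives $u^i\in\Wloc^{2,p}$. Because $f$ is a metric isometry, the compositions $v^i:=u^i\circ f$ are weakly $g$-harmonic, hence $v^i\in\Wloc^{2,p}$ as well. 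Sobolev embedding ($p>n$) makes both $u$ and $v$ local $C^1$-diffeomorphisms with $\Wloc^{2,p}$ inverses, and therefore $f=u^{-1}\circ v\in\Wloc^{2,p}$. With this regularity in hand, the paper runs the approximation in two clean steps rather than simultaneously: first smooth $h_\nu\to h$ in $\Wloc^{1,p}$ with $f$ fixed (then $f^*h_\nu\to g$ in $\Wloc^{1,p}$ because $f\in\Wloc^{2,p}$), reducing to smooth $h$; then approximate $f$ by smooth \emph{diffeomorphisms} $f_\nu\to f$ in $\Wloc^{2,p}$ (possible since $f$ is now a $C^1$-diffeomorphism), so $f_\nu^*h\to g$ in $\Wloc^{1,p}$, reducing to smooth $f$ and $h$.

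Two smaller points. Your smooth identity $\int_U\scal_{g_\eps}(u\circ f_\eps)\,d\mu_{g_\eps}=\int_V\scal_{h_\eps}u\,d\mu_{h_\eps}$ needs $f_\eps$ to be a diffeomorphism, which mollification of a merely Lipschitz $f$ does not guarantee; this is again resolved once $f\in C^1$ via the harmonic-coordinate argument. And in the final limit you vary both the metric and the test function; this is harmless, but strictly speaking the continuity statement recorded in the paper is for fixed test functions, so one should split the difference or note that the dependence on the Lipschitz test function is also continuous in the relevant norms.
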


\begin{proof} Working in local coordinate neighborhoods we may assume that $M$ and $N$ are open subsets of $\R^n$. 
Let $(u^1,\dots,u^n)$ be local harmonic coordinates on $\Omega' \subset N \subset \R^n$.
Since $h$ is $\Wloc^{1,p}$-regular, by \cite[Chapter 3, (9.40)]{Taylor} we have $u^i \in \Wloc^{2,p}(\Omega')$ for $1 \leq i \leq n$. 

Let $\Omega := f^{-1}(\Omega') \subset M \subset \R^n$. 
In order to simplify the notation we replace $M$ and $N$ by $\Omega$ and $\Omega'$. 

Since $f \colon \Omega \to \Omega'$ is a metric isometry, the functions $v^i:= u^i \circ f \colon \Omega \to \R$, $1 \leq i \leq n$,  are weakly harmonic, hence harmonic, for the $\Wloc^{1,p}$-regular metric $g$, by an argument as in   \cite[page 2417]{CH06}.
Again, this implies  $v^i \in \Wloc^{2,p}(\Omega)$ for $1 \leq i \leq n$. 

The Sobolev embedding theorem implies that $u = (u^1, \ldots, u^n)  \colon \Omega \to u(\Omega) \subset \R^n$ and $v = (v^1, \ldots, v^n) \colon \Omega' \to v(\Omega') \subset \R^n$ are $C^1$-diffeomorphisms. 
Furthermore, a short calculation shows that their inverses are of regularity $\Wloc^{2,p}$ and  that the same holds for 
\[
    f = u^{-1} \circ v \colon \Omega \to \Omega'. 
\]
By Proposition \ref{Lipschitzlength_neu}  we have $f^*h=g$. 
It remains to show that this and the $W^{2,p}$-regularity of $f$ imply that 
\begin{equation} \label{transformation} 
    \llangle \scal_g , u \circ f \rrangle = \llangle \scal_h , u \rrangle, \qquad u \in C^{\infty}_c( \Omega'). 
\end{equation} 

For proving this claim, let  $(h_{\nu})$ be a sequence of smooth Riemannian metrics on $\Omega'$ converging to $h$ in the $\Wloc^{1,p}$-topology. 
Since $f$ is $\Wloc^{2,p}$-regular, this implies that $g_{\nu} := f^* h_{\nu} \in \Wloc^{1,p}(\Omega, T^* \Omega \otimes T^*\Omega)$ converges to $g$ in the $\Wloc^{1,p}$-topology. 
For proving \eqref{transformation} we may therefore assume that $h$ is smooth.
This implies that $g = f^* h$ is still of regularity $\Wloc^{1,p}$. 

Next let $(f_{\nu})$   be a sequence of $C^{\infty}$-diffeomorphisms $\Omega \to \Omega'$ converging to $f$ in the $\Wloc^{2,p}$-topology. 
This implies that $g_{\nu} := f_\nu^* h$ converges to $g$ in the $\Wloc^{1,p}$-topology. 
For proving \eqref{transformation} we may hence assume that both $h$ and $f$,  and hence  $g = f^* h$, are smooth. 

But for smooth $g$, $f$ and $h$, the scalar curvatures $\scal_g \colon \Omega \to \R$ and $\scal_h \colon \Omega' \to \R$  are defined by the classical formula \eqref{scalar}, and furthermore $\scal_g = \scal_h \circ f$ by the invariance of the scalar curvature of smooth Riemannian metrics under smooth diffeomorphisms.
From this Equation \eqref{transformation} follows.
\end{proof}

\section{Dirac operators twisted with Lipschitz bundles}\label{S:Lipschitz bundles}

Let $M$ be a smooth oriented $n$-dimensional manifold which admits a spin structure,  i.e., the second Stiefel-Whitney class of $M$ vanishes. 
Let $\gamma$ be some smooth Riemannian metric on $M$, which remains fixed throughout. 
Since $M$ admits a spin structure, we may choose a  smooth
$\Spin(n)$-principal bundle $\Prin_{\Spin}(M,\gamma) \to M$ together with a
$2$-fold covering map to the $\SO(n)$-principal bundle  $P_{\SO}(M,\gamma)
\to M$ of smooth positively oriented $\gamma$-orthonormal frames which is
compatible with the canonical map $\Spin(n) \to \SO(n)$.

Let $W$ be a complex left $\cl_n$-module with $\Spin(n)$-invariant Hermitian inner product. 
We obtain the associated {\em spinor bundle}   
\[
  S = \Prin_{\Spin}(M,\gamma) \times_{\Spin(n)} W \to M .
\]
It carries a Hermitian inner product induced by the inner product on $W$ and a metric connection  $\nabla^\gamma$ induced by the Levi-Civita connection on $(M,\gamma)$. 
Furthermore, it carries  a skew-adjoint module structure over the Clifford algebra bundle $\cl(TM, \gamma) \to M$ with respect to which $\nabla^\gamma$ acts as a derivation. 
See~\cite[Ch.~II]{LM89} for details.

Let  $g$ be an admissible Riemannian metric on $M$ of regularity $\Wloc^{1,p}$, $p > n$. 
In order to avoid the discussion of non-smooth spinor bundles, we develop the spin geometry for $(M,g)$ entirely on the smooth bundle $S \to M$ similarly as in \cite[Section 3.1]{LL15}. 

We recall that there exists a unique $\gamma$-self-adjoint and positive linear isomorphism $B_g \colon TM \to TM$ covering $\id_M\colon M \to M$ and satisfying 
\[
     \gamma(v,w) = g (B_g(v), w) , \quad v, w \in T_x M, \, x \in M. 
\]
It has a unique positive square root $b_g \colon TM \to TM$ which is again $\gamma$-self-adjoint and satisfies 
\[
    \gamma(v,w) = g (b_g v, b_g w), \quad v, w \in T_x M, x \in M. 
\]
Let $S^2(T^*M) \to M$ denote the vector bundle of symmetric $(0,2)$-tensors over $M$, and let $S^2_+(T^*M) \subset S^2(T^*M)$ be the open subset of positive definite symmetric $(0,2)$-tensors. 

\begin{lem}  \label{regb} The assignment $g \mapsto b_g$ defines a continuous map 
\[
   b \colon  \Wloc^{1,p}(M, S^2_+(T^*M) ) \to \Wloc^{1,p} (M, \End(TM))
\]
where $p>n$.
In particular, each $b_g$ maps local smooth $\gamma$-orthonormal frames to local $W^{1,p}$-regular $g$-orthonormal frames.
\end{lem}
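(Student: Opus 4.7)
The plan is to work in smooth local $\gamma$-orthonormal frames and reduce the statement to a standard Sobolev composition result applied to the real-analytic map $\mathbf A\mapsto (\mathbf A^{-1})^{1/2}$ on the open cone $\Sym_n^+\subset \Sym_n$ of symmetric positive-definite $n\times n$ matrices.

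First, I would fix any point $x_0\in M$ and choose a smooth $\gamma$-orthonormal frame $(e_1,\ldots,e_n)$ on a chart $U\ni x_0$. Represent $g$ by the matrix-valued function $\mathbf G(x)=(g(e_i(x),e_j(x)))$; the assignment $g\mapsto \mathbf G$ is a continuous linear map into $\Wloc^{1,p}(U,\Sym_n)$, and $\mathbf G$ takes values in $\Sym_n^+$ by positivity of $g$. A direct computation from $\gamma(v,w)=g(B_g v,w)$ shows that in this frame the matrix of $B_g$ is $\mathbf G^{-1}$, so the matrix of $b_g$ is the unique positive square root $(\mathbf G^{-1})^{1/2}$. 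Hence the lemma reduces to proving that the real-analytic map $\Phi\colon \Sym_n^+\to \Sym_n^+$, $\Phi(\mathbf A)\defeq(\mathbf A^{-1})^{1/2}$, induces a continuous superposition operator $\mathbf G\mapsto \Phi\circ\mathbf G$ on $\Wloc^{1,p}$.

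Since $p>n$, Morrey's embedding $\Wloc^{1,p}\hookrightarrow C^0_{\loc}$ implies that on every compact $K\subset U$ the image $\mathbf G(K)$ lies in some compact $C\Subset \Sym_n^+$, and the same holds for every $\mathbf G_\nu(K)$ along a $\Wloc^{1,p}$-convergent sequence $\mathbf G_\nu\to \mathbf G$ once $\nu$ is large. The standard Sobolev chain rule for smooth nonlinear maps then yields $\Phi\circ \mathbf G\in \Wloc^{1,p}(U,\Sym_n)$ with $d(\Phi\circ \mathbf G)=d\Phi(\mathbf G)\cdot d\mathbf G$, and continuity of the superposition follows from the uniform continuity of $\Phi$ and $d\Phi$ on $C$ combined with convergence of $\mathbf G_\nu$ and $d\mathbf G_\nu$ in $L^p_{\loc}$. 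The last assertion of the lemma is then immediate: for any smooth $\gamma$-orthonormal frame one has $g(b_g e_i,b_g e_j)=\gamma(e_i,e_j)=\delta_{ij}$, so $(b_g e_1,\ldots,b_g e_n)$ is a $g$-orthonormal frame, and its $\Wloc^{1,p}$-regularity is inherited from that of $b_g$.

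The main subtlety lies in the composition step, since $\Phi$ has no simple closed-form derivative: the differential of the matrix square root at $\mathbf B=\mathbf A^{1/2}$ is implicitly given by the Sylvester equation $\mathbf B\,d\mathbf B+d\mathbf B\,\mathbf B=d\mathbf A$. One does not need an explicit formula for $d\Phi$, however, only its $C^\infty$ character on compact subsets of $\Sym_n^+$, which is sufficient for the Sobolev chain rule in the range $p>n$. This makes the argument routine but tethered to the hypothesis $p>n$ at two places: it guarantees both the continuity of $\mathbf G$ (so that the image stays in a compact subset of $\Sym_n^+$) and the applicability of the chain rule without invoking multiplicative Sobolev embeddings.
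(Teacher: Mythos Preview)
Your proposal is correct and follows essentially the same approach as the paper: both reduce the claim to the fact that composition with a smooth map $\Sym_n^+\to\Sym_n^+$ induces a continuous operator on $\Wloc^{1,p}$ for $p>n$. The only cosmetic difference is that the paper factors $b$ as $g\mapsto B_g$ followed by the smooth square root $\sqrt{\,\cdot\,}$, whereas you combine inversion and square root into a single smooth map $\Phi(\mathbf A)=(\mathbf A^{-1})^{1/2}$; your version is in fact a bit more explicit about why $p>n$ is used (Morrey embedding to keep the image in a compact subset of $\Sym_n^+$, and the chain rule), which the paper leaves implicit.
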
 

\begin{proof} Let $\Sym  \subset \R^{n \times n}$ be the linear subspace of symmetric matrices and $\Sym^+ \subset \Sym$ be the open subset of positive definite symmetric matrices. 
The map $\Sym^+ \to \Sym^+$, $A \mapsto A^2$, is bijective and smooth with
differential which is everywhere invertible. 
It is hence a diffeomorphism with smooth inverse which we denote by $A \mapsto \sqrt{A}$. 

Denoting by ${\Sym}^+ ( TM) \to M$ the smooth bundle of positive definite $\gamma$-self-adjoint endomorphisms of $TM$, we obtain a continuous map 
\[ 
  \sqrt{-}  \colon \Wloc^{1,p}( M , {\Sym}^+ (TM)) \to \Wloc^{1,p}(M , {\Sym}^+(TM)), 
\]
induced by the map $A \mapsto \sqrt{A}$ in each fiber. 

The assignment $g \mapsto B_g$ induces a continuous map 
\[
   B \colon  \Wloc^{1,p}( M , S^2_+(T^*M) ) \to \Wloc^{1,p}(M , {\Sym}^+(TM)) . 
\]
Since $b  = \sqrt{B}$, this concludes the proof of Lemma \ref{regb}. 
\end{proof} 

\begin{notation} \label{kurz} For $v \in TM$ we set $v^g := b_g(v)$. 
\end{notation}

Let $(e_1, \ldots, e_n)$ be a smooth positively oriented $\gamma$-orthonormal frame of $TM$ over $U \subset M$, that is, a section of $\Prin_{\SO}(M,\gamma)|_U \to U$, and choose a lift to a section of $\Prin_{\Spin}(M,\gamma)|_U \to U$. 
This induces a smooth trivialisation $S|_U \cong U \times W$. 
In particular, each $\sigma \in W$  induces a smooth section of $S|_U = \Prin_{\Spin}(M,\gamma)|_U \times_{\Spin(n)} W$ which  is denoted again by $\sigma$ and called a {\em constant local spinor field} with respect to the chosen trivialisation of $\Prin_{\Spin}(M,\gamma)|_U$.

Let $\omega^g_{ji}  \in \Lloc^2  (U, T^*U)$, $1 \leq i,j \leq n$,  be the connection $1$-forms of the Levi-Civita connection $\nabla_g$ on $TM \to M$ with respect to the $g$-orthonormal frame $(e^g_1, \ldots, e^g_n)$ of $TM|_U$, that is, 
\[
     \nabla_g ( e^g_i)  =  \sum_{j=1}^n \omega^g_{ji} (e^g_j)  . 
\]
We now define 
\begin{equation} \label{def:spinconn} 
    \nabla_g^S  \sigma  :=   \frac{1}{2} \sum_{i < j} \omega^g_{ji} \; e_i  \cdot  e_j  \cdot  \sigma \in \Lloc^2(U, S|_U). 
\end{equation} 
Imposing the Leibniz rule, this extends to a metric connection  $\nabla^S_{g}$ on $S|_U \to U$ in the sense of Definition \ref{connection}.  
This definition is independent of the choice of $(e_1, \ldots, e_n)$ and its lift to $\Prin_{\Spin}(M,\gamma)|_U$ and can hence be used to define a global metric connection $\nabla^S_g$ on $S \to M$. 

\begin{dfn} We call $\nabla^S_g$ the {\em spinor connection} on $S \to M$ with respect to $g$. 
\end{dfn} 

\begin{rem} \label{translate} If $g$ is smooth, then $b_g$ induces a smooth map $ \beta_g \colon \Prin_{\SO} (M,\gamma) \to \Prin_{\SO}(M,g)$ of $\SO(n)$-principal bundles (compare  the proof of Lemma \ref{regb}) and there exists a spin structure $\Prin_{\Spin} (M,g) \to \Prin_{\SO}(M,g)$ together with a commutative diagram of smooth fiber bundles over $M$ 
\[
   \xymatrix{
    \Prin_{\Spin} (M,\gamma)   \ar[r]^{\tilde \beta_g}  \ar[d] & \Prin_{\Spin}(M,g)  \ar[d]   \\
    \Prin_{\SO} (M,\gamma)   \ar[r]^{\beta_g}                      & \Prin_{\SO}(M,g)   } 
\]
where $\tilde \beta_g$ is $\Spin(n)$-equivariant.

Let $S_g :=  \Prin_{\Spin}(M, g) \times_{\Spin(n)} W \to M$ be the associated smooth spinor bundle which is usually considered on the spin Riemannian manifold $(M,g)$. 
We obtain a smooth unitary vector bundle isomorphism 
\[
    \Xi_g \colon     S  =  \Prin_{\Spin} (M,\gamma) \times_{\Spin(n)} W \stackrel{ \tilde \beta_g \times_{\Spin(n)} \id}{ \longrightarrow } \Prin_{\Spin} (M,g) \times_{\Spin(n)} W = S_g 
\]
which is compatible with the $\cl(TM,\gamma)$-, respectively $\cl(TM,g)$-module structures on $S$ and $S_g$ intertwined by $\beta_g$. 

Hence the local expression of the spinor connection on $S_g$ from \cite[Theorem II.4.14]{LM89} pulls back to our defining equation \eqref{def:spinconn} under $\Xi_g$. 
\end{rem}

Let $E \to M$ be a Hermitian Lipschitz bundle  together with a Lipschitz connection $\nabla^E$  which we do not assume to be metric. 
The tensor product bundle $S \tensor E \to M$ is a Lipschitz bundle with induced inner product and connection
\begin{equation} \label{prod_conn} 
   \nabla^{S \otimes E}  = \nabla^S_{g}  \otimes 1 + 1 \tensor \nabla^E. 
\end{equation} 

\begin{dfn} \label{def:spin} 
We define the \emph{spin Dirac operator twisted with $(E,\nabla^E)$},  
\[
  \Dirac_E =  \Dirac_{g,E} \colon \Liploc(M,S \tensor E)\to  \Lloc^2 (M,S \tensor E), 
\]
locally over an open subset $U \subset M$ equipped with a smooth $\gamma$-orthonormal frame $(e_1, \ldots, e_n)$ of $TM$ and using Notation \ref{kurz}, by 
\begin{equation*} \label{def:Dirac} 
       \Dirac_{g,E } ( \psi) := \sum_{i=1}^n e_i    \cdot \nabla^{S \otimes E}_{e^g_i } \psi .  
\end{equation*} 
This expression is independent of the choice of $(e_1, \ldots, e_n)$ and hence gives a global definition of $\Dirac_{g,E}$. 
\end{dfn} 

From now on let $M$ be closed. 
Recall \cite[Lemma 3.1]{LL15} that the $L^2 $ and $W^{1,2}$-norms on $\Lip(M, S \otimes E)$, 
\[
\| \psi\|_{L^2}  = \int_M | \psi |^2 d \mu_g , \quad    \| \psi\|_{W^{1,2}}  = \int_M \bigl( | \psi|^2 + | \nabla^{S \otimes E} \psi|^2 \bigr) d\mu_g 
\]
are equivalent to the corresponding norms defined for the metric $\gamma$ (which induces a different spinor connection on $S$) and hence define the same $L^2$ and $W^{1,2}$-completions. 
If we wish to emphasize the precise norm, we add the relevant metric as a subscript.

We consider $\Dirac_{E}$ as an unbounded operator 
\[
  \Dirac_E \colon  L_\gamma^2(M , S \otimes E) \stackrel{\rm{dense}}{\supset}  \Lip(M, S\otimes E) \to L_\gamma^2(M, S \otimes E) . 
\]
Since integration by parts holds for Lipschitz sections, the twisted Dirac operator  $\Dirac_{E}$ has a formal adjoint  and is hence closable with closure 
\[
   \bar \Dirac_{E} \colon \dom ( \bar \Dirac_{E} ) \to L^2(M, S \otimes E). 
\]

\begin{prop} \label{domain} We have 
\[
    \dom ( \bar \Dirac_{E} ) = H^1( M , S \otimes E) . 
 \]
 \end{prop}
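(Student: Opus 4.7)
The plan is to prove both inclusions separately; the nontrivial content is a local elliptic regularity estimate.

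For the inclusion $H^1(M, S\otimes E) \subset \dom(\bar\Dirac_E)$, I would show that $\Dirac_E$ extends continuously to a bounded linear operator $H^1 \to L^2$. Working in a chart $U$ with a smooth $\gamma$-orthonormal frame $(e_1,\dots,e_n)$ of $TM$ and a Lipschitz local frame of $E$, Definition \ref{def:spin} gives $\Dirac_E\psi = \sum_i b_g(e_i) \cdot \partial_i \psi + A\psi$, where $b_g$ is continuous (Lemma \ref{regb}) and hence acts as a bounded multiplier, while the zeroth-order part $A$ has coefficients built from the Levi-Civita connection $1$-forms $\omega^g_{ji}\in\Lloc^p$ of the $\Wloc^{1,p}$-metric $g$ together with the Lipschitz connection forms of $\nabla^E$. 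Using the Sobolev embedding $H^1\hookrightarrow L^{2n/(n-2)}$ for $n\geq 3$ (respectively $L^q$ for any finite $q$ when $n=2$) together with $p>n$ and Hölder's inequality, $A$ maps $H^1$ to $L^2$. Density of $\Lip(M, S\otimes E)$ in $H^1$ then identifies every $\psi \in H^1$ as an element of $\dom(\bar\Dirac_E)$ with $\bar\Dirac_E\psi$ equal to this bounded extension.

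For the reverse inclusion $\dom(\bar\Dirac_E) \subset H^1(M, S\otimes E)$, the key step is the Gårding-type estimate
\[
  \|\psi\|_{H^1} \leq C\bigl( \|\Dirac_E\psi\|_{L^2} + \|\psi\|_{L^2} \bigr), \qquad \psi \in \Lip(M, S\otimes E).
\]
Once this holds, any sequence $\psi_k\in\Lip$ that is Cauchy for the graph norm of $\bar\Dirac_E$ is automatically Cauchy in $H^1$, so its limit $\psi$ lies in $H^1$. To prove the estimate I would reduce via a finite partition of unity to a small coordinate chart on which $E$ is Lipschitz-trivialized, then write $\Dirac_E = \Dirac_0 + B$ with $\Dirac_0$ a reference smooth constant-coefficient Euclidean Dirac operator and $B$ a zeroth-order perturbation whose coefficients encode the difference $b_g - \id$, the Christoffel symbol differences (in $\Lloc^p$), and the Lipschitz connection form of $\nabla^E$. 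The classical elliptic estimate for $\Dirac_0$ combined with a Sobolev-Hölder argument absorbing $B$ on a sufficiently small chart yields the local bound; commutators $[\Dirac_E,\phi]$ with Lipschitz cutoffs $\phi$ arising in the patching contribute only bounded multiplication operators.

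The main obstacle is producing this local elliptic estimate when the metric has coefficients only in $\Wloc^{1,p}$ and the bundle connection only in $L^\infty$, which lies below the threshold of classical Dirac elliptic regularity: one cannot differentiate the symbol to apply standard pseudodifferential machinery, and commutators with smooth cutoffs no longer produce smooth error terms. This is precisely the analytic content handled by the $W^{1,p}$-Dirac framework of Bartnik--Chruściel~\cite{BartnikChrusciel}, whose local elliptic regularity theorem for Dirac-type operators with $W^{1,p}$-regular metric coefficients provides exactly the needed estimate. After invoking it chart by chart and patching via the partition of unity, passage to the closure identifies $\dom(\bar\Dirac_E) = H^1(M, S\otimes E)$.
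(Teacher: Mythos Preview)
Your proposal is correct and follows essentially the same route as the paper: verify that in local trivialisations the operator has the form $a^j\partial_j + b$ with $a^j\in W^{1,p}_{\loc}$, $b\in L^p_{\loc}$ and invertible principal symbol, then invoke the Bartnik--Chru\'sciel framework \cite{BartnikChrusciel}. The paper's proof is simply terser---it checks that the coefficients satisfy \cite[Equation~(3.4)]{BartnikChrusciel} and the ellipticity condition \cite[Equation~(3.5)]{BartnikChrusciel}, and then cites \cite[Theorem~3.7]{BartnikChrusciel} directly for $\dom(\bar\Dirac_E)=H^1$, rather than separating the two inclusions and sketching the G{\aa}rding estimate as you do. One small slip: in your local expression the Clifford multiplication is by $e_i$, not $b_g(e_i)$, while the directional derivative is taken along $e^g_i=b_g(e_i)$; this does not affect the regularity bookkeeping.
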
 
 
\begin{proof} 
Fix local coordinates $(x^1, \ldots, x^n)$ on $U \subset M$, a smooth trivialisation $S|_U \cong U \times  \CC^q$ and a Lipschitz trivialisation $E|_U \cong U \times \CC^r$. 
We obtain an induced local trivialisation $(S \otimes E)|_{U} \cong U \times \CC^{q+r}$ with respect to which we can write 
\begin{equation} \label{local_dirac} 
    \Dirac_E  (u) = a^j  \partial_j u + b u 
\end{equation} 
where $a^j  \in \Wloc^{1,p}(U, \End(\CC^{q+r}) )$ and $b  \in \Lloc^p(U, \End(\CC^{q+r}) )$.
Hence  the operator $\Dirac_E$  satisfies the regularity conditions \cite[Equation (3.4)]{BartnikChrusciel}. 

Let $\sharp^{g_t} \colon T^*M \cong TM$ be the musical isomorphism for  $g$. 
By a standard computation, the principal symbol  $\sigma_{\xi}(\Dirac_E)$ for $\xi \in T^*M$ is given by 
\[
      \psi  \mapsto {\rm i}  \, \bigl(  \sharp^{g_t}( \xi ) \cdot \psi \bigr), 
\]
and hence the ellipticity condition \cite[Equation (3.5)]{BartnikChrusciel} is satisfied with a constant $\eta$ which can be chosen uniformly on $M$.

The assertion of Proposition \ref{domain} is hence implied by  \cite[Theorem 3.7]{BartnikChrusciel}. 
\end{proof}

\begin{prop}\label{L:formally self-adjoint}
If the connection $\nabla^E$ is metric, then the twisted Dirac operator $\Dirac_{E}$ is formally self-adjoint with respect to the $L^2$-inner product on $\Lip(M, S \otimes E)$ induced by $g$, that is, 
\begin{equation*}
    \left( \Dirac_{E} \psi_1 , \psi_2)_g =( \psi_1 ,  \Dirac_{E} \psi_2\right)_g , \quad \psi_1, \psi_2\in \Lip(M,S \tensor E). 
\end{equation*}
In particular the operator 
\[
    \bar \Dirac_{E} \colon H^1( M , S \otimes E) \to L_g^2(M , S \otimes E) 
\]
is self-adjoint. 
\end{prop}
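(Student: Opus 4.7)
The plan is to split the argument into two parts: first establish formal self-adjointness on $\Lip(M, S\otimes E)$, and then upgrade this to genuine self-adjointness of $\bar\Dirac_E$ using Proposition \ref{domain}.

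For the formal self-adjointness I would follow the classical template: for smooth data one has the pointwise identity
\[
\langle \Dirac_E \psi_1, \psi_2\rangle - \langle \psi_1, \Dirac_E \psi_2\rangle = -\mathrm{div}_g X(\psi_1, \psi_2),
\]
where $X(\psi_1,\psi_2)$ is characterised in a local $g$-orthonormal frame $(f_i) = (b_g(e_i))$ by $g(X,f_i) = -\langle e_i\cdot \psi_1, \psi_2\rangle_{S\otimes E}$. The three ingredients are: skew-Hermiticity of Clifford multiplication by $\gamma$-unit vectors; metric compatibility of the product connection \eqref{prod_conn}, which is precisely where the metricity of $\nabla^E$ enters; and Clifford compatibility of $\nabla^S_g$, inherited from the smooth situation via the identification $\Xi_g$ of Remark \ref{translate}. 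Integration over the closed manifold $M$ together with Stokes then gives formal self-adjointness in the smooth case. To extend this to the admissible setting I would approximate: pick smooth Riemannian metrics $g_\nu\to g$ in $\Wloc^{1,p}$ by mollification in smooth charts, realise $E$ as isomorphic to a smooth Hermitian bundle up to Lipschitz isomorphism, and approximate $\nabla^E$ by smooth metric connections $\nabla^{E,\nu}$ in Lipschitz norm. The smooth case gives symmetry for $\Dirac_{g_\nu, E_\nu}$ with respect to $(\cdot,\cdot)_{g_\nu}$, and I would pass to the limit on fixed Lipschitz $\psi_1, \psi_2$: by Lemma \ref{regb} the square roots $b_{g_\nu}\to b_g$ in $\Wloc^{1,p}$, the Christoffel symbols of $g_\nu$ converge to those of $g$ in $\Lloc^p$, and the volume densities $d\mu_{g_\nu}\to d\mu_g$ uniformly by Sobolev embedding $W^{1,p}\hookrightarrow C^0$ for $p>n$; together these imply $\Dirac_{g_\nu,E_\nu}\psi_i \to \Dirac_E\psi_i$ in $L^2$.

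For self-adjointness of $\bar\Dirac_E$, the key input is Proposition \ref{domain}, identifying $\dom(\bar\Dirac_E) = H^1(M, S\otimes E)$. Since $\Lip(M,S\otimes E)$ is dense in $H^1$, continuity of the $L^2$-inner product extends the symmetry from the previous step to the entire domain, so $\bar\Dirac_E$ is symmetric. To upgrade symmetry to self-adjointness, I would note that the Hilbert adjoint $(\bar\Dirac_E)^*$ has domain consisting of all $u\in L^2$ for which the distributional application of the formal adjoint differential operator $\Dirac_E^\dagger$ lies in $L^2$. By the formal self-adjointness from the previous step, $\Dirac_E^\dagger = \Dirac_E$, so applying the local elliptic regularity \cite[Theorem 3.7]{BartnikChrusciel} exactly as in the proof of Proposition \ref{domain}, any such $u$ already lies in $H^1$. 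Hence $\dom((\bar\Dirac_E)^*) = H^1 = \dom(\bar\Dirac_E)$, which combined with symmetry gives $\bar\Dirac_E = (\bar\Dirac_E)^*$.

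The main obstacle I anticipate is the stability of the local formula \eqref{def:spinconn} under smoothing: the connection one-forms $\omega^{g_\nu}_{ji}$ converge only in $L^p$ while $\psi$ is merely Lipschitz, so the product $\omega^{g_\nu}_{ji}\cdot (e_i\cdot e_j\cdot \psi)$ must be handled via pointwise boundedness of Clifford multiplication together with the H\"older inequality, in the spirit of Section \ref{sec:distrib_scal}. Once that convergence is in place the rest of the argument is formal. An alternative that avoids approximation would verify the divergence identity directly for the $\Wloc^{1,p}$-regular vector field $X(\psi_1,\psi_2)$ and invoke Stokes' theorem for $W^{1,1}$ vector fields on the closed manifold $M$; this route is parallel but slightly more technical in the bookkeeping of regularities.
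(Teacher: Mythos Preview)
Your approach is correct and follows the same overall strategy as the paper: establish formal self-adjointness for smooth $g$ first, then pass to admissible $g$ by approximating $g$ with smooth metrics $g_\nu$ in $W^{1,p}$ and using Lemma~\ref{regb} to control the convergence of $\Dirac_{g_\nu,E}\psi_i$ in $L^2$.

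There is one simplification in the paper you should note. You propose to also approximate the Lipschitz connection $\nabla^E$ by smooth metric connections $\nabla^{E,\nu}$, but this is unnecessary: the paper keeps $(E,\nabla^E)$ fixed throughout and observes that, for smooth $g$, the classical divergence identity and Stokes argument from \cite[Proof of Lemma~5.1]{LM89} already goes through for Lipschitz sections and a Lipschitz metric twist connection, since integration by parts holds in that regularity. Only the metric $g$ needs smoothing. This avoids the bookkeeping of simultaneously approximating the bundle. Conversely, your treatment of the self-adjointness of $\bar\Dirac_E$ is more explicit than the paper's: the paper simply asserts ``in particular'' self-adjointness after establishing symmetry on $H^1=\dom(\bar\Dirac_E)$, whereas you correctly spell out that $\dom((\bar\Dirac_E)^*)\subset H^1$ requires the elliptic regularity of \cite[Theorem~3.7]{BartnikChrusciel}, exactly as in Proposition~\ref{domain}.
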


\begin{proof}
For smooth $g$ this holds by the same argument as in~\cite[Proof of Lemma~5.1]{LM89}, since integration by parts holds for Lipschitz sections. 
If $g$ is of regularity $W^{1,p}$, $p >n$,  we choose a sequence of smooth metrics $g_\nu$ on $M$ converging to $g$ in the $W^{1,p}$-topology, hence in the $L^{\infty}$-topology. By Lemma \ref{regb} we obtain
\[
    \lim_{\nu \to \infty}   \left( \Dirac_{g_\nu ,E} \psi_1 , \psi_2\right)_{g_\nu}  = \left( \Dirac_{g,E} \psi_1, \psi_2\right)_g , \qquad
    \quad \lim_{\nu \to \infty}   \left(  \psi_1 , \Dirac_{g_\nu, E} \psi_2\right)_{g_\nu}  = \left( \psi_1, \Dirac_{g_\nu,E} \psi_2\right)_g . \qedhere
\]
\end{proof}

In the remainder of this section let 
\begin{enumerate}[\myicon] 
  \item $M$ be of even dimension $n$,
  \item $W$ be the unique irreducible complex $\cl_n$-module with the usual $\ZZ/2$-grading $W=W^+\oplus W^-$ and a $\Spin(n)$-invariant Hermitian metric, 
  \item $\Dirac_E = \bar\Dirac_{g,E} \colon H^1(M, S\otimes E) \to L^2(M, S \otimes E)$ be the closure of the twisted Dirac operator from Definition \ref{def:spin}. 
\end{enumerate} 
We obtain induced  $\ZZ/2$-gradings 
\[
    S^{\pm} = \Prin_{\Spin}(M, \gamma)  \times_{\Spin(n)}  W^{\pm}, \qquad S\tensor E=(S^+\tensor E)\oplus (S^-\tensor E), 
\]
and hence induced $\ZZ/ 2$-gradings 
\begin{align*} 
    L^2(M,S\tensor E) & =L^2(M,S^+\tensor E)\oplus L^2(M,S^-\tensor E), \\
    H^1(M,S\tensor E) & =H^1(M,S^+\tensor E)\oplus H^1(M,S^-\tensor E) 
\end{align*} 
such that the operator $ \Dirac_{E}$ is odd with respect to these gradings, that is,
 \begin{equation*}\label{E:barP odd}
  \Dirac_{E}  =\begin{pmatrix}0&  \Dirac_{E}^- \\  \Dirac_{E}^+&0 \end{pmatrix} . 
\end{equation*}

If the connection $\nabla^E$ on $E$ is metric, then the operators $ \Dirac_E^\pm\colon H^2(M,S^\pm\tensor E)\to L_g^2(M,S^\mp\tensor E)$ are adjoint to each other  by Proposition~\ref{L:formally self-adjoint}.

Let $\hat{\mathsf{A}}(M)$ denote the total $\hat{\mathsf{A}}$-class of the tangent bundle of $M$. 

\begin{thm} \label{thm:index} The operator $  \Dirac^+_{E} \colon H^1(M, S^+ \otimes E) \to L^2(M, S^- \otimes E)$ is Fredholm with index 
\[
      \ind( \Dirac^+_{E})  = \langle \hat{\mathsf{A}}(M) \cup \ch(E) , [M] \rangle. 
\]

\end{thm}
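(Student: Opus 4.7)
The argument splits into Fredholmness and identification of the index, and in both parts I would reduce to classical data via compact perturbations.

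\emph{Fredholmness.} I would first pass to the case of a metric connection on $E$. Pick an auxiliary metric Lipschitz connection $\nabla^{E,0}$ on $E$, obtained by patching local flat connections via a Lipschitz partition of unity and orthogonalising. The difference $\nabla^E-\nabla^{E,0}$ is a bounded (in fact Lipschitz) section of $T^*M\otimes\End(E)$, so $\Dirac_E-\Dirac_{E,0}$ equals Clifford multiplication by a bounded $\End(S\otimes E)$-valued section. This yields a bounded operator $L^2\to L^2$, and hence a compact operator $H^1\to L^2$ via the Rellich embedding $H^1(M,S\otimes E)\hookrightarrow L^2(M,S\otimes E)$ available on the closed manifold $M$. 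Since Fredholmness and Fredholm index are stable under compact perturbations, it suffices to treat $\Dirac_{E,0}$. By Propositions \ref{domain} and \ref{L:formally self-adjoint}, $\Dirac_{E,0}\colon H^1\to L^2$ is self-adjoint; its resolvent exists and factors through $H^1\hookrightarrow L^2$, hence is compact, so $\Dirac_{E,0}$ has discrete spectrum and finite-dimensional kernel. Consequently $\Dirac_{E,0}^+$, and therefore $\Dirac_E^+$, is Fredholm with the same index.

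\emph{Index computation.} I would reduce to classical Atiyah--Singer by smoothing all data and using invariance of the index under compact perturbations and norm-continuous deformations. First, smooth the bundle: a complex Lipschitz Hermitian bundle on a compact smooth manifold is Lipschitz-isomorphic to a smooth Hermitian bundle $E'$. One way is to approximate a Lipschitz idempotent representing $E$ as a subbundle of $M\times\CC^N$ uniformly by a smooth idempotent of the same rank; another is to smooth the Lipschitz classifying map $M\to BU(r)$ within its homotopy class. Transporting $\nabla^{E,0}$ through such an isomorphism gives a Lipschitz metric connection on $E'$, which can then be convolved in local trivialisations to a smooth metric connection. Each step changes $\Dirac_E$ only by a zeroth-order bounded term, so by the same compact-perturbation argument as above the index is preserved. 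Thus we may assume $E$ and $\nabla^E$ are smooth.

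Next, smooth the metric: by density of $C^\infty$ in $W^{1,p}$ on the compact manifold $M$, choose smooth Riemannian metrics $g_\nu\to g$ in $W^{1,p}$. Lemma \ref{regb} gives $b_{g_\nu}\to b_g$ in $W^{1,p}$, and the Christoffel symbols of $g_\nu$ converge to those of $g$ in $L^p$. Using the local expression \eqref{local_dirac} with coefficients $a^j\in W^{1,p}_{\loc}$ and $b\in L^p_{\loc}$, together with the Sobolev multiplications permitted by $p>n$ (in particular $W^{1,p}\cdot L^2\subset L^2$ and $L^p\cdot L^{2p/(p-2)}\subset L^2$ after Sobolev embedding), one deduces that $\Dirac_{g_\nu,E}\to\Dirac_{g,E}$ in the norm topology of bounded operators $H^1_\gamma\to L^2_\gamma$. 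Since the Fredholm index is locally constant in operator norm, the indices coincide for $\nu$ large. For smooth $g_\nu$, smooth $E$, and smooth metric connection, $\Dirac_{g_\nu,E}^+$ is the classical twisted spin Dirac operator, and the Atiyah--Singer index theorem yields $\ind(\Dirac_{g_\nu,E}^+)=\langle\hat A(M)\cup\ch(E),[M]\rangle$. The main obstacle is this last norm-convergence step: verifying that $W^{1,p}$-convergence of the metric actually yields operator-norm convergence $H^1\to L^2$ of the associated Dirac operators requires careful bookkeeping of all coefficients entering \eqref{local_dirac} (the endomorphisms $b_{g_\nu}$, the Christoffel symbols, the spin-connection $1$-forms $\omega_{ji}^{g_\nu}$, and the volume densities) and a repeated application of the Sobolev-multiplication estimates available because $p>n$.
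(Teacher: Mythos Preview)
Your proposal is correct and follows the same overall strategy as the paper: reduce to the smooth case and invoke the classical Atiyah--Singer theorem, using stability of the Fredholm index. The execution differs in two respects. First, for Fredholmness you pass to a metric connection by a compact perturbation and then argue via self-adjointness and compact resolvent, whereas the paper obtains Fredholmness for the whole family directly from the Bartnik--Chru\'sciel elliptic package (their Corollary~4.5), without singling out the metric case. Second, for the index you carry out the reduction in stages---first smoothing the bundle and connection by compact (zeroth-order) perturbations, then approximating the metric by a \emph{sequence} $g_\nu\to g$ in $W^{1,p}$---while the paper bundles everything into a single linear homotopy $g_t=(1-t)g+t\gamma$, $\nabla^E_t=(1-t)\nabla^E+t\tilde\nabla^E$ and shows the resulting family $t\mapsto\Dirac_t^+$ is norm-continuous and Fredholm, again by appeal to Bartnik--Chru\'sciel. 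Your route is a bit more hands-on and avoids some of the external citations, at the cost of treating the metric and connection deformations separately; the paper's single-path argument is more economical and sidesteps the need to separately verify compactness of each perturbation. The ``main obstacle'' you flag---operator-norm continuity of $\Dirac$ in the metric---is exactly the content of the paper's Proposition~\ref{fredcont}(a), which is handled by the regularity bookkeeping you describe together with \cite[Theorem~3.7]{BartnikChrusciel}.
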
 

Theorem \ref{thm:index} is well known for smooth $g$ and smooth twist bundles $E \to M$ by the Atiyah-Singer index theorem, see~\cite[Theorem III.13.10]{LM89}. 
We will reduce the general case to this case using the invariance of the index in norm continuous families of Fredholm operators.

In a first step, we put a smooth structure on the twist bundle $E \to M$. 

\begin{lem} \label{smooth} There is a smooth structure on $E \to M$ which is compatible with the given Lipschitz structure on $E$. 
\end{lem}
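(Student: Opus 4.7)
The plan is to obtain a smooth structure on $E$ by classifying it via a Lipschitz map into a Grassmannian, which I then approximate by a smooth map. Compactness of $M$ and the density of smooth maps in Lipschitz maps (to finite-dimensional smooth target manifolds) are the main ingredients.

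First, I embed $E$ as a Lipschitz subbundle of a trivial bundle. Since $M$ is compact, pick a finite open cover $\{U_\alpha\}_{\alpha=1}^N$ of $M$ admitting Lipschitz local trivializations $\phi_\alpha \colon E|_{U_\alpha} \to U_\alpha \times \CC^r$, where $r$ is the rank of $E$, together with a smooth partition of unity $\{\chi_\alpha\}$ subordinate to $\{U_\alpha\}$. With the convention that $\chi_\alpha(x)\phi_\alpha(e)$ is set to $0$ when $x \notin \supp(\chi_\alpha)$, the map
\[
\Psi \colon E \to M \times \CC^{Nr}, \quad e \mapsto \bigl( \pi(e), \chi_1(\pi(e))\phi_1(e), \ldots, \chi_N(\pi(e))\phi_N(e) \bigr)
\]
is a fiberwise injective Lipschitz bundle morphism, and realizes $E$ as a Lipschitz subbundle of the trivial bundle $M \times \CC^{Nr}$.

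Consequently, $E$ is identified with the pullback $\Phi^*\tau$ of the tautological bundle $\tau$ over the complex Grassmannian $\mathrm{Gr} := \mathrm{Gr}_r(\CC^{Nr})$ under a Lipschitz classifying map $\Phi \colon M \to \mathrm{Gr}$ sending $x$ to the fiber $\Psi(E_x) \subset \CC^{Nr}$. I now approximate $\Phi$ by a smooth map $\tilde\Phi \colon M \to \mathrm{Gr}$ in the $C^0$-norm to arbitrary precision. This is standard: embed $\mathrm{Gr}$ smoothly into some $\R^K$, mollify $\Phi$ componentwise using smooth charts on $M$ and a smooth partition of unity, and compose with the smooth retraction of a tubular neighborhood of $\mathrm{Gr} \subset \R^K$ back onto $\mathrm{Gr}$. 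The resulting $\tilde\Phi$ is smooth, and $\sup_{x \in M} \|\tilde\Phi(x) - \Phi(x)\|$ can be made as small as desired.

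For $\tilde\Phi$ sufficiently close to $\Phi$, the subspaces $\Phi(x), \tilde\Phi(x) \subset \CC^{Nr}$ make a uniformly small angle, so the orthogonal projection $\Pi_x \colon \Phi(x) \to \tilde\Phi(x)$ is a linear isomorphism whose dependence on $x$ is Lipschitz (since $\Phi$ is Lipschitz, $\tilde\Phi$ is smooth, and orthogonal projection onto a linear subspace is an algebraic operation on its orthonormal basis). This yields a Lipschitz bundle isomorphism $\Theta \colon E = \Phi^*\tau \to \tilde\Phi^*\tau$ whose inverse is likewise Lipschitz. Since $\tilde\Phi$ is smooth, the pullback $\tilde\Phi^*\tau$ is a smooth vector bundle in a canonical way, and transporting this smooth structure back to $E$ via $\Theta$ gives a smooth structure on $E$ for which the identity $E \to E$ is a Lipschitz bundle isomorphism between the old and new structures.

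The only delicate point is the $C^0$-approximation of a Lipschitz map to $\mathrm{Gr}$ by a smooth map; once one works in a smooth ambient Euclidean embedding of $\mathrm{Gr}$ with its tubular retraction, this reduces to the routine fact that smooth maps are $C^0$-dense in Lipschitz maps between finite-dimensional smooth manifolds.
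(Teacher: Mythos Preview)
Your proof is correct and follows essentially the same route as the paper: classify $E$ by a Lipschitz map into a Grassmannian, approximate by a smooth map, and transport the resulting smooth structure back through a Lipschitz bundle isomorphism. The paper phrases the middle step as ``homotopic through a Lipschitz homotopy'' and then invokes the Lipschitz isomorphism of pullbacks abstractly, whereas you make the isomorphism explicit via fiberwise orthogonal projection after $C^0$-approximation; these amount to the same argument.
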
 

\begin{proof} Let $\mu \colon M\to \mathbf{Gr}_r\left(\CC^N \right)$ be a Lipschitz map which classifies the bundle $E$ where $r=\rank E$ and $N$ is a large enough integer.
Then $\mu$ is homotopic, through a Lipschitz homotopy, to a smooth map $\tilde \mu \colon M\to \mathbf{Gr}_r\left(\CC^N\right)$.
The map $\tilde \mu$ classifies a smooth bundle $\tilde E \to M$ which is Lipschitz isomorphic to $E \to M$, and we can use this Lipschitz isomorphism to pull back the smooth structure on $\tilde E$ to a smooth structure on $E$. 
\end{proof}

Endow the smooth bundle $E \to M$ with a smooth inner product and a smooth connection $\tilde \nabla^E$ which remain fixed from now on.
We obtain families which depend continuously on $t \in [0,1]$ of
\begin{enumerate}[\myicon]  
   \item $W^{1,p}$-metrics $g_t := (1-t) g + t \gamma$ on $M$, 
   \item (not necessarily metric) Lipschitz connections $\nabla_t^E = (1-t) \cdot  \nabla^E  + t \cdot \tilde \nabla^E$ on $E$, 
   \item twisted Dirac operators $\Dirac_t = \bar \Dirac_{g_t,(E, \nabla^E_t)}  \colon H^1(M, S \otimes E) \to L^2 (M, S \otimes E)$. 
 \end{enumerate} 

Furthermore,  $a^j(t)  \in \Wloc^{1,p}(U , \End(\CC^{r+q}) )$ and $b(t) \in \Lloc^p(U, \End(\CC^{r+q}) )$ in Equation \eqref{local_dirac} depend continuously on $t$.

\begin{prop} \label{fredcont} 
  The following holds. 
  \begin{enumerate}[(a)]  
   \item $ \Dirac^+_t \colon H^1(M,S^+\tensor E)\to L^2(M,S^-\tensor E)$ is  bounded for $t \in [0,1]$ and the map $[0,1]\to \mathscr{B} (H^1(M,S^+\tensor E), L^2(M,S^-\tensor E))$, $t \mapsto  \Dirac^+_t$, is continuous.
  \item  $ \Dirac^+_t $ is Fredholm for $t \in [0,1]$. 
 \end{enumerate}
\end{prop}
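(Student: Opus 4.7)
The plan is to derive both parts from the local coordinate representation \eqref{local_dirac} together with Sobolev--Hölder estimates in part (a), and from the Gårding-type elliptic estimates of \cite{BartnikChrusciel} underlying Proposition \ref{domain} in part (b).

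For part (a), I would fix a finite cover of $M$ by coordinate patches carrying trivialisations of $S \otimes E$, so that on each patch $\Dirac_t$ takes the form $a^j(t)\partial_j + b(t)$ with $a^j(t) \in \Wloc^{1,p}$ and $b(t) \in \Lloc^p$, each depending continuously on $t$ in these topologies. Since $p > n$, the Sobolev embedding $W^{1,p} \hookrightarrow L^\infty$ bounds the first-order contribution by $C \|a^j(t)\|_{W^{1,p}}\|u\|_{H^1}$, while Hölder's inequality with exponent pair $(p, 2p/(p-2))$ together with the Sobolev embedding $H^1 \hookrightarrow L^{2p/(p-2)}$ (which is valid since $p > n$) controls $b(t)u$ by $C \|b(t)\|_{L^p}\|u\|_{H^1}$. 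A partition of unity then delivers uniform boundedness on $H^1 \to L^2$, and applying the same estimates to $\Dirac_t^+ - \Dirac_s^+$ yields continuity of $t \mapsto \Dirac_t^+$ in $\mathscr{B}(H^1(M,S^+\tensor E), L^2(M,S^-\tensor E))$.

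For part (b), I would invoke the elliptic estimate
\[
  \|u\|_{H^1} \leq C_t \bigl( \|\Dirac_t^+ u\|_{L^2} + \|u\|_{L^2} \bigr),
\]
which is the core ingredient of \cite[Theorem~3.7]{BartnikChrusciel} and already underlies the domain identification in Proposition \ref{domain}. Combined with the compact Rellich--Kondrachov embedding $H^1 \hookrightarrow L^2$, a standard Peetre-type argument shows that $\Dirac_t^+$ has finite-dimensional kernel and closed range. For the cokernel, I would decompose $\nabla_t^E = \widetilde\nabla_t^E + A_t$ into a metric Lipschitz connection and a Lipschitz $\End(E)$-valued one-form; the formal $L^2$-adjoint $(\Dirac_t^+)^\dagger$ then differs from $\Dirac_t^-$ only by a bounded zero-order operator built from $A_t$ and so is again a first-order operator satisfying the Bartnik--Chruściel hypotheses. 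Elliptic regularity applied to $(\Dirac_t^+)^\dagger$ promotes any $L^2$ section orthogonal to $\mathrm{Range}(\Dirac_t^+)$ into $H^1$, whence the cokernel is isomorphic to $\ker(\Dirac_t^+)^\dagger \subset H^1$, which is finite-dimensional by the same compactness argument.

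The main obstacle is that $\Dirac_t^+$ and $\Dirac_t^-$ need not be adjoints of each other, since the interpolated connection $\nabla_t^E$ need not be metric, so finite-dimensionality of the cokernel cannot be read off directly from that of $\ker \Dirac_t^-$. The deviation contributes only a bounded operator, however, leaving the principal symbol and hence the ellipticity estimate unaffected; the continuous dependence of the coefficients in $\Wloc^{1,p} \times \Lloc^p$ also gives local uniformity of the constants $C_t$, which together with the norm continuity from part (a) supplies the homotopy invariance of the index that will drive the proof of Theorem \ref{thm:index}.
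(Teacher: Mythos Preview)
Your argument is correct and rests on the same Bartnik--Chru\'sciel machinery as the paper, but you unpack more than the paper does. The paper's proof is two sentences: part~(a) follows from \cite[Theorem~3.7]{BartnikChrusciel} together with the continuous dependence of the local coefficients $a^j(t)\in W^{1,p}_{\mathrm{loc}}$ and $b(t)\in L^p_{\mathrm{loc}}$ on $t$, and part~(b) is read off directly from \cite[Corollary~4.5]{BartnikChrusciel}, which already contains the full Fredholm statement for first-order elliptic operators with these coefficient regularities. Your explicit Sobolev--H\"older bounds in~(a) essentially reproduce what that theorem provides, and your Peetre-type argument plus formal-adjoint analysis in~(b) reconstructs the content of that corollary. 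The benefit of your route is that it makes the mechanism transparent (in particular the observation that the non-metric part of $\nabla_t^E$ perturbs only the zero-order term, leaving the principal symbol elliptic), at the cost of length; the paper's route is more economical given that \cite{BartnikChrusciel} is already in play for Proposition~\ref{domain}.
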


\begin{proof}
The first statement follows from \cite[Theorem 3.7]{BartnikChrusciel} and the local uniform boundedness and continuity of the coefficients $a^j(t)$ and $b(t)$ in $t$. 
The Fredholm property follows from  \cite[Corollary 4.5]{BartnikChrusciel}.
\end{proof}

Theorem \ref{thm:index} now follows from Proposition \ref{fredcont} and the invariance of the Fredholm index of norm continuous families of Fredholm operators which implies
\begin{align*} 
   \ind( \Dirac^+_{g,(E, \nabla^E)}  ) & = \ind( \Dirac^+_0) = \ind ( \Dirac^+_1) \\
                                                         & = \ind (  \Dirac_{\gamma, (E, \tilde \nabla^E) })  = \langle \hat{\mathsf{A}}(M) \cup \ch(E) , [M] \rangle . 
\end{align*}

\section{An integral Schr\"odinger-Lichnerowicz formula}\label{sec:Lichnerowicz}

Let $(M, \gamma)$ be a closed smooth $n$-dimensional Riemannian spin manifold and let  $S \to M$ be a smooth spinor bundle as in Section \ref{S:Lipschitz bundles} associated to $\gamma$ and  some $\cl_n$-representation $W$ with $\Spin(n)$-invariant Hermitian inner product. 

Let $E \to M$ be a smooth Hermitian vector bundle with metric connection $\nabla^E$. 
If $g$ is a smooth metric on $M$, then the twisted Dirac operator $\Dirac_E = \Dirac_{g,E}$ from Definition \ref{def:spin}
satisfies the Schr\"odinger-Lichnerowicz formula \cite[Theorem~II.8.17]{LM89},
\[
    \Dirac_E^2 = \nabla_g^* \circ \nabla_g + \frac{1}{4} \scal_g + \mathcal{R}^E .
\]
Here  $\mathcal{R}^E$ is a self-adjoint bundle endomorphism of $S \otimes E$ depending on the curvature $R^E$ of $E$.

Now let $g$  be an admissible Riemannian metric on $M$. 
In this case an integral form of the  Schr\"odinger-Lichnerowicz formula still holds for the untwisted Dirac operator, see  \cite[Proposition 3.2]{LL15}. 
Here we will generalize this formula to Dirac operators twisted with a Lipschitz bundle $E \to M$ and draw some conclusions. 
Since in general the curvature of Lipschitz bundles over $M$ is not defined, we will work in the following setting.

Let $(N,h)$ be a smooth Riemannian manifold, set $\ell:=\dim N$ and let $\bigl(E_0,\nabla^{E_0}\bigr)$ be a smooth Hermitian vector bundle over $N$ with smooth metric connection $\nabla^{E_0}$. 
Take a Lipschitz map 
\[
     f \colon (M, d_g)  \to (N,d_h) 
\]
and let $E := f^*(E_0) \to M$ be the pull back bundle of $E_0$ under $f$. 
This is a Hermitian Lipschitz bundle with pull-back metric connection $\nabla^E= f^* \nabla^{E_0}$. 

Denote by $R^{E_0}\in\Omega^2(N,\End(E_0))$ the curvature of the connection $\nabla^{E_0}$ and let 
\[
    R^E = f^*(R^{E_0}) \in L^\infty \Omega^2(M,\End(E))
\]
be the pullback of $R^{E_0}$ along $f$.

For  $x \in M$ and $\sigma\tensor\eta\in (S\tensor E)_x$, we  set (recall Notation \ref{kurz}) 
\begin{equation}\label{eq:R^E_endomorphism}
    \mathcal R_g^E(\sigma\tensor\eta)\defeq \frac{1}{2}\sum_{i,  j=1}^n\bigl(e_i \cdot  e_j\cdot \sigma\bigr)\tensor\bigl(R^E_{e^g_i ,e^g_j }\eta\bigr)
\end{equation}
where $(e_1,\ldots, e_n)$ is some orthonormal basis  of $(T_x M, \gamma_x)$. 
Observe that $\mathcal R_g^E$ defines a section in $ L^\infty(M,\End(S\tensor E))$
and hence a bounded operator 
\[
   \mathcal R_g^E\colon L^2(M,S\tensor E)\to L^2(M,S\tensor E). 
\]

Let $\Dirac=  \bar \Dirac_{g, E} \colon H^1(M, S \otimes E) \to L^2(M, S \otimes E)$ be the self-adjoint spin Dirac operator on $M$ twisted with $(E, \nabla^E)$, see Proposition \ref{L:formally self-adjoint}. 
We denote by $\nabla$ the tensor product connection $\nabla^{S \otimes E}$ defined in~\eqref{prod_conn}.

\begin{thm}[Integral Schr\"odinger-Lichnerowicz formula] \label{T:integral Lichnerowicz}
For all $\psi_1$, $\psi_2\in H^1(M,S\tensor E)$ we get 
\begin{equation*}\label{eq:integral_Lichnerowicz}
    \bigl(\Dirac \psi_1, \Dirac \psi_2\bigr)_g=\bigl(\nabla \psi_1,\nabla \psi_2\bigr)_g+ \frac{1}{4} \bigl\llangle\scal_g,\langle \psi_1, \psi_2\rangle\bigr\rrangle+\bigl(\mathcal R_g^E\psi_1, \psi_2\bigr)_g . 
\end{equation*}
\end{thm}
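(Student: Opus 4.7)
My plan is a double approximation argument: I would smoothly approximate both the $W^{1,p}$-metric $g$ and the Lipschitz connection $\nabla^E$, apply the classical integrated Schr\"odinger--Lichnerowicz identity in the approximating smooth setting, and pass to the limit.

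\textbf{Density reduction.} Every term in the claimed identity depends continuously on $(\psi_1,\psi_2)\in H^1(M,S\otimes E)\times H^1(M,S\otimes E)$: the operator $\Dirac\colon H^1\to L^2$ is bounded (Proposition~\ref{domain} and the closed graph theorem), $\nabla^{S\otimes E}\colon H^1\to L^2$ is bounded by the very definition of $H^1$, the endomorphism $\mathcal R_g^E$ is of class $L^\infty$, and the scalar-curvature pairing extends continuously to $H^1\times H^1$ by Proposition~\ref{cor:scal_on_sections}. Since $\Lip(M,S\otimes E)$ is dense in $H^1(M,S\otimes E)$ via standard local mollification in Lipschitz trivialisations, it suffices to prove the identity for $\psi_1,\psi_2\in\Lip(M,S\otimes E)$.

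\textbf{Simultaneous smooth approximation.} Using Lemma~\ref{smooth} I put a smooth structure on $E$ and fix an auxiliary smooth metric connection $\hat\nabla^E$. Then $\nabla^E=\hat\nabla^E+\alpha$ with $\alpha\in\Lip(M,T^*M\otimes\End E)$, and a partition-of-unity mollification produces smooth $\alpha_\nu\to\alpha$ in $W^{1,q}$ for every $q<\infty$; setting $\nabla^E_\nu:=\hat\nabla^E+\alpha_\nu$, the curvatures
\[
  R^E_\nu=\hat R^E+d^{\hat\nabla^E}\alpha_\nu+\alpha_\nu\wedge\alpha_\nu
\]
converge to $R^E=f^*R^{E_0}$ in $L^q$ for every $q<\infty$. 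Simultaneously I choose smooth metrics $g_\nu\to g$ in $W^{1,p}$; by Lemma~\ref{regb} together with the Sobolev embedding $W^{1,p}\hookrightarrow C^0$ (recall $p>n$), $b_{g_\nu}\to b_g$ uniformly, so the $g_\nu$-orthonormal frames $e^{g_\nu}_i=b_{g_\nu}(e_i)$ converge uniformly to $e^g_i$.

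\textbf{Limit and main obstacle.} For each $\nu$ the data $(g_\nu,\nabla^E_\nu)$ are smooth. The pointwise Schr\"odinger--Lichnerowicz identity together with self-adjointness of $\Dirac_\nu:=\Dirac_{g_\nu,(E,\nabla^E_\nu)}$ on the closed manifold yields, first on $C^\infty$ and then by density on $H^1$,
\[
 (\Dirac_\nu\psi_1,\Dirac_\nu\psi_2)_{g_\nu}=(\nabla_\nu\psi_1,\nabla_\nu\psi_2)_{g_\nu}+\tfrac14\bigl\llangle\scal_{g_\nu},\langle\psi_1,\psi_2\rangle\bigr\rrangle+(\mathcal R^E_{g_\nu,\nabla^E_\nu}\psi_1,\psi_2)_{g_\nu}.
\]
Letting $\nu\to\infty$: the volume densities $d\mu_{g_\nu}\to d\mu_g$ uniformly; uniform convergence of the frames $e^{g_\nu}_i$ and of $\alpha_\nu$, together with $L^p$-convergence of the spinorial and twist connection $1$-forms, yields $\Dirac_\nu\psi_i\to\Dirac\psi_i$ and $\nabla_\nu\psi_i\to\nabla\psi_i$ in $L^2$; the scalar-curvature term converges by the $W^{1,p}$-continuity of $g\mapsto\llangle\scal_g,\cdot\rrangle$ observed after~\eqref{defscal} (the continuity is visibly valid also for Lipschitz test functions); and the $L^q$-convergence $R^E_\nu\to R^E$ combined with the $L^\infty$-bounds on $\psi_1,\psi_2$ gives the convergence of the curvature term. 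The main technical obstacle is producing smooth approximations $\nabla^E_\nu$ whose curvatures converge to $R^E$ in a sufficiently strong topology: the curvature contains a derivative of the connection $1$-form, so mere $L^\infty$-convergence $\alpha_\nu\to\alpha$ is not enough; one genuinely needs $W^{1,q}$-convergence, which is available precisely because $\nabla^E$ is Lipschitz and hence $\alpha$ lies in $W^{1,q}$ for all finite $q$.
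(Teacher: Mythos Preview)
Your proof has a genuine gap at precisely the point you flag as the ``main technical obstacle''. You claim that the difference $\alpha=\nabla^E-\hat\nabla^E$ is Lipschitz, hence lies in $W^{1,q}$ for all finite $q$, and that this gives convergence of the curvatures $R^E_\nu\to R^E$. But $\alpha$ is \emph{not} Lipschitz. In a local trivialisation pulled back from $E_0$, the connection $1$-form of $\nabla^E=f^*\nabla^{E_0}$ is
\[
  \omega=f^*\omega_0=\sum_{\alpha,i}(\partial_i f^\alpha)\,dx^i\otimes(\Gamma_\alpha\circ f),
\]
which involves the first derivatives $\partial_i f^\alpha$ of the Lipschitz map $f$. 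These lie in $L^\infty$ but are in general not even continuous, let alone Lipschitz; a Lipschitz gauge change does not improve this. Hence $\alpha\in L^\infty$ only, and you cannot produce smooth $\alpha_\nu\to\alpha$ in $W^{1,q}$. Without that, $d^{\hat\nabla^E}\alpha_\nu$ has no reason to converge, and your curvature argument collapses. (This is exactly why the paper remarks at the start of Section~\ref{sec:Lichnerowicz} that the curvature of a general Lipschitz connection is not defined, and restricts to the pullback setting where $R^E:=f^*R^{E_0}$ is given \emph{by fiat}.)

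The paper's proof avoids this by exploiting the pullback structure: rather than mollifying the connection $1$-form, it approximates the Lipschitz map $f$ by smooth maps $f_\nu$ with $f_\nu\to f$ in $L^\infty$ and in $H^1$, with uniformly bounded $\|df_\nu\|_{L^\infty}$ (Lemma~\ref{L:C^0-H^1 approx}). The approximating curvature forms are then $\Omega_\nu=f_\nu^*\Omega_0$, and since $\Omega_0$ is a fixed smooth $2$-form on $N$, the pullback $f_\nu^*\Omega_0$ involves only products of \emph{first} derivatives of $f_\nu$ together with $B_{\alpha\beta}\circ f_\nu$; the combination of $H^1$-convergence and uniform $L^\infty$-bounds on $df_\nu$ is exactly enough to push these products through in $L^2$. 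Your density reduction and the metric approximation $g_\nu\to g$ are fine and match the paper; the missing idea is to approximate $f$ rather than $\nabla^E$.
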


The proof of Theorem  \ref{T:integral Lichnerowicz} is based on the following approximation result. 
\begin{lem}\label{L:C^0-H^1 approx}
Let $f\colon \R^n\to \R$ be a  Lipschitz function with compact support $K \subset \R^n$ and let $K \subset U \subset \R^n$ be an open neighborhood. 
Then there exists a sequence of smooth functions $f_\nu \colon \R^n\to\R$  with compact supports in $U$ and satisfying 
\begin{equation} \label{E:infty-L^2 approx} 
 \lim_{\nu \to \infty}  \bigl(  \norm{f_\nu -f}_{L^\infty} +\norm{f_\nu -f}_{H^1}\bigr)  = 0  , \qquad \max_{\nu} \norm{\dd f_\nu}_{L^\infty}  < \infty . 
\end{equation}
\end{lem}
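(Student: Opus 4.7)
The plan is to take $f_\nu$ to be a mollification of $f$. Fix a standard nonnegative mollifier $\rho\in C_c^\infty(\R^n)$, supported in the unit ball, with $\int\rho\,dx=1$, and set $\rho_\eps(x)=\eps^{-n}\rho(x/\eps)$. Choose a sequence $\eps_\nu\to 0$ (with $\eps_\nu$ below the threshold identified in the next paragraph) and define $f_\nu\defeq f*\rho_{\eps_\nu}$, which is automatically smooth on $\R^n$.

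The support condition is immediate from $\supp(f_\nu)\subset K+\overline{B(0,\eps_\nu)}$: since $K$ is compact and $U$ is an open neighborhood of $K$, the distance $\delta\defeq\dist(K,\R^n\setminus U)$ is strictly positive (or infinite if $U=\R^n$), and for $\eps_\nu<\delta$ one has $\supp(f_\nu)\subset U$. For $L^\infty$-convergence, observe that $f$ is uniformly continuous on $\R^n$ (being Lipschitz), so $\norm{f_\nu-f}_{L^\infty}\to 0$ by a standard mollifier estimate.

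The uniform bound on $\norm{\dd f_\nu}_{L^\infty}$ is where the Lipschitz hypothesis enters. By Rademacher's theorem $f$ is differentiable a.e.\ and its weak gradient satisfies $\norm{\nabla f}_{L^\infty}\leq \Lip(f)$. Because convolution commutes with weak differentiation, $\nabla f_\nu=(\nabla f)*\rho_{\eps_\nu}$, and Young's inequality gives $\norm{\nabla f_\nu}_{L^\infty}\leq \norm{\nabla f}_{L^\infty}\cdot\norm{\rho_{\eps_\nu}}_{L^1}=\norm{\nabla f}_{L^\infty}\leq\Lip(f)$, uniformly in $\nu$. For $H^1$-convergence, note that $f\in H^1(\R^n)$ because $f$ and $\nabla f$ are both in $L^\infty$ with compact support, hence in $L^2$. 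Standard mollifier theory then gives $f*\rho_{\eps_\nu}\to f$ and $(\nabla f)*\rho_{\eps_\nu}\to \nabla f$ in $L^2$, so $\norm{f_\nu-f}_{H^1}\to 0$.

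There is no serious obstacle; the result is a routine application of mollification. The one thing to watch is that the $L^\infty$ convergence of $f_\nu$ to $f$ and the uniform control of $\dd f_\nu$ are both consequences of the Lipschitz hypothesis and would fail for a general $H^1$ function, so the proof must invoke $\Lip(f)<\infty$ (via Rademacher) rather than only the weaker Sobolev regularity that is exploited elsewhere in the paper.
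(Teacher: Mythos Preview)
Your proof is correct and uses the same mollification construction $f_\nu = f * \rho_{\eps_\nu}$ as the paper. The only notable difference is in the verification of $H^1$-convergence: the paper passes to the Fourier side and estimates $\|\hat\rho_\eps\hat f - \hat f\|_{L^2((1+|\xi|^2)\mu)}$ directly, whereas you invoke the more elementary fact that mollification converges in $L^2$ applied separately to $f$ and to $\nabla f$ (using $\nabla f_\nu = (\nabla f)*\rho_{\eps_\nu}$). Your route is shorter and arguably cleaner, since you have already observed $\nabla f\in L^\infty$ with compact support (hence $\nabla f\in L^2$) for the uniform derivative bound anyway; the Fourier argument in the paper rederives the same conclusion by a longer computation.
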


\begin{proof}
  Let $\rho\colon\R^n\to[0,\infty)$ be a compactly supported smooth function such that $\int_{\R^n}\rho=1$. 
  For $\eps > 0$ set $\rho_\epsilon(x)\coloneqq\epsilon^{-n}  \rho(x/\epsilon)$ and consider the convolution
  \[
     \rho_\eps * f (x) = \int_{\R^n} \rho_{\eps} (\tau) \cdot f(x - \tau) d\tau . 
  \]
  For small enough $\eps$ we have $\supp( \rho_{\eps} * f) \subset U$. 
  Setting $f_\nu  :=  \rho_{1/ \nu} * f$ we have 
  \[
        \lim_{\nu \to \infty}  \| f_\nu -f\|_{L^\infty} = 0  , \qquad    \max_{k} \norm{\dd f_\nu }_{L^\infty} < \infty . 
   \]
 For a compactly supported function $u \colon \R^n \to \R$ we denote by $\hat u \colon \R^n \to \R$, 
 \[
     \hat u(\xi) = \int_{\R^n} e^{-{\bf i} \langle x, \xi \rangle} u(x) dx , 
 \]
 its Fourier transform. 
 We have $\hat\rho(0)=\int_{\R^n} \rho=1$ and $\widehat{\rho_\epsilon}(\xi)=\hat\rho(\epsilon\xi)$. 
 As $\hat\rho$ is a Schwartz function, there is a uniform (in $\epsilon$ and $\xi$) bound $C$ on $|\hat\rho_\epsilon|$.
  
  Let $\eta > 0$. 
  As $\hat f(\xi)$ is a Schwartz function, $|\hat f(\xi)|^2(1+\abs{\xi}^2)$ is integrable.
  Then there exists  $R > 0$ such that 
\[
   \int_{\R^n\setminus B_R}|\hat f(\xi)|^2(1+\abs{\xi}^2) d\xi < \frac{\eta}{2(C+1)^2} . 
\]
Set $F:= \max\{ \max_{\xi \in B_R} |\hat f (\xi)|^2 ( 1+ |\xi|^2), 1\}$. 
There exists $\eps_\eta > 0$ such that, for all $0 < \eps \leq \eps_\eta$, we get
\[
    \sup_{\xi \in B_r} \big( \hat\rho_\epsilon(\xi)-1\big)^2 < \frac{\eta}{2F\vol(B_R)} . 
 \]
  For $0 < \eps  \leq \eps_\eta$ we hence obtain, denoting by $\mu$ the Lebesgue measure on $\R^n$, 
    \begin{align*}
    \|\hat\rho_\epsilon & \hat f-\hat f\|^2_{L^2((1+\abs{\xi}^2)\mu)}\\
     &  \le   \int_{B_R} (\hat\rho_\epsilon-1)^2 |\hat f|^2(1+|\xi|^2)d\xi + \int_{\R^n\setminus B_R} ( | \hat\rho_\epsilon|+1)^2 |\hat f|^2 (1+\abs{\xi}^2)d\xi \\
     &  \le  \frac{\eta}{2 F \vol(B_R)}  \int_{B_R}  F  d\xi + (C+ 1)^2 \int_{\R^n \setminus B_R}  |\hat f|^2 (1+\abs{\xi}^2)d\xi \\
         &  \le    \frac{\eta}{2} + \frac{\eta}{2}   =  \eta .\\
\end{align*}
Using the definition of the $H^1$-norm in the Fourier picture, there is a constant $\Lambda > 0$ which only depends on $n$ such that  for $\eps > 0$ we have
 \[
     \|\rho_\epsilon*f- f\|^2_{H^1}   \leq  \Lambda \cdot \|\hat\rho_\epsilon  \hat f-\hat f\|^2_{L^2((1+\abs{\xi}^2)\mu)}  . 
 \]
Letting  $\eta$ go to $0$ in the previous estimates,  this shows $\lim_{\nu \to \infty} \| f_\nu  - f\|_{H^1} = 0$, as required. 
  \end{proof}

\begin{proof}[Proof of Theorem \ref{T:integral Lichnerowicz}] 
By Proposition \ref{cor:scal_on_sections}, it is enough to show the assertion for $\psi_1, \psi_2 \in \Lip(M, S \otimes E)$, which we assume from now on. 

\medskip 

\noindent \underline{Step 1:} 
Let $(g_\nu)$ a sequence of smooth metrics on $M$ with 
\[
      \lim_{\nu \to \infty} \| g_\nu - g\|_{W^{1,p}} = 0 . 
\]
We will show that if Theorem \ref{T:integral Lichnerowicz} holds for all $g_\nu$, then it also holds for $g$. 

Note that 
\begin{align*}
    \bigl(\Dirac_{g_\nu}\psi_1, \Dirac_{g_\nu }\psi_2\bigr)_{g_\nu} & \to\bigl(\Dirac_g \psi_1, \Dirac_g \psi_2\bigr)_g , \\
    \bigl(\nabla_{g_\nu }\psi_1,\nabla_{g_\nu } \psi_2\bigr)_{g_\nu}  & \to\bigl(\nabla_g \psi_1,\nabla_g \psi_2\bigr)_g , \\
    \bigl\llangle\scal_{g_\nu },\langle \psi_1, \psi_2\rangle \bigr\rrangle & \to\bigl\llangle\scal_g,\langle \psi_1, \psi_2\rangle \bigr\rrangle . 
\end{align*}   
Hence it remains to show that
 \[
        \bigl(\mathcal R_{g_\nu}^E \psi_1, \psi_2\bigr)_{g_\nu }\to\bigl(\mathcal R^E_g \psi_1, \psi_2\bigr)_g . 
\]
Let  $(e_1, \ldots, e_n)$ be a smooth $\gamma$-orthonormal frame of $TM$ over some open subset $U \subset M$.  
Without loss of generality, we may assume that $\psi_1$ is supported in a compact subset $K \subset U$. 
By the defining equation \eqref{eq:R^E_endomorphism} there is a constant $C$, not depending on $\nu$,  such that on $U$ we have 
\[
    \bigl|\mathcal R_{g_\nu}^E \psi_1-\mathcal R_g^E \psi_1\bigr| \leq     C \max_{i,j}\left|R^E_{e^{g_\nu}_i ,e^{g_\nu}_j}-R^E_{e^g_i,e^g_j}\right| |\psi_1|  \quad {\rm {a.e.}} 
\]

Since $p > n$, and hence we have $\lim_{\nu \to \infty} \| g_{\nu}  - g\|_{L^\infty(K)}=0$ and  $\lim_{\nu \to \infty} \|e^{g_\nu}_i - e^g_i\|_{L^\infty(K)} = 0$ for  $1 \leq i \leq n$ by Lemma \ref{regb},    we obtain   
\begin{equation*}
\quad \Bigl\|\frac{d\mu_{g_{\nu} }}{d\mu_\gamma}-\frac{d\mu_g}{d\mu_\gamma}\Bigr\|_{L^\infty(K)} \to 0,  \qquad \bigl\|R^E_{e^{g_\nu}_i,  e^{g_\nu}_j}-R^E_{e^g_i,e^g_j }\bigr\|_{L^\infty(K)} \to 0 , \quad 1 \leq i, j \leq n. 
\end{equation*}
We conclude
\begin{align*} 
    \big|   \bigl( \mathcal R^E_{g_{\nu} }  &  \psi_1,\psi_2\bigr)_{g_{\nu} } - \bigl( \mathcal R^E_g \psi_1, \psi_2\bigr)_g \big|   \\ 
                & = \big|  \int_M\left<\mathcal R^E_{g_{\nu} } \psi_1,\psi_2\right> \frac{d\mu_{g_{\nu} }}{d\mu_\gamma}d\mu_\gamma - \int_M\left<\mathcal R^E_g \psi_1,\psi_2\right> \frac{d\mu_g}{d\mu_\gamma}d\mu_\gamma \big| \\ 
  &  \leq \Big( C_n  \max_{i,j }\bigl\|R^E_{  e^{g_\nu}_i  , e^{g_\nu}_j } -R^E_{e^g_i,e^g_j}\bigr\|_{L^\infty(K)} \Bigl\|\frac{d\mu_{g_{\nu} }}{d\mu_\gamma}\Bigr\|_{L^\infty(K)} \\
       & \qquad \qquad +\bigl\|\mathcal R_g^E \bigr\|_{L^\infty(K)} \biggl\|\frac{d\mu_{g_{\nu} }}{d\mu_\gamma}-\frac{d\mu_g}{d\mu_\gamma}\biggr\|_{L^\infty(K) }  \Big) \int_M|\psi_1| \, |\psi_2| \,d\mu_\gamma.
\end{align*} 
For $\nu \to \infty$ this  tends to $0$, concluding Step 1. 

\medskip

\noindent \underline{Step 2:} We prove  Theorem \ref{eq:integral_Lichnerowicz} for smooth $g$ which together with Step 1 completes the proof of Theorem \ref{T:integral Lichnerowicz}. 
Choose open subsets $U \subset M$ and  $V \subset N$ with $f(U) \subset V$ and 
\begin{enumerate}[\myicon] 
  \item a smooth $\gamma$-orthonormal frame $(e_1, \ldots, e_n)$ of $TM|_U$, 
  \item local coordinates $(x^1, \ldots, x^n)$ on $U$ and $(y^1, \ldots, y^\ell)$ on $V$, 
  \item a  unitary trivialisation $E_0|_V \cong V  \times \CC^r$. 
\end{enumerate}

Since  both sides of~\eqref{eq:integral_Lichnerowicz} are sesquilinear forms, it suffices to show Theorem \ref{T:integral Lichnerowicz} for sections $\psi_1, \psi_2 \in \Lip(M, S \otimes E)$ which are supported in a compact subset $K \subset U$. 
With the given  trivialisation of $E_0|_V$ and a  continuity argument we can in fact assume that $\psi_1, \psi_2 \in C^{\infty}(U, S|_U \otimes \CC^r)$, supported in $K$.

Let $\omega_0 \in \Omega^1(V,M_r(\CC))$ be the smooth connection 1-form of $\nabla^{E_0}$ with respect to the local trivialisation $E_0 |_V \cong V\times \CC^r$ and define 
\[
   \omega := f^{\ast} \omega_0 \in L^\infty\Omega^1(U,M_r(\CC)). 
\]
Let 
\[
    \hat \Dirac\colon \Lip (U , S|_U \tensor\CC^r)\longrightarrow L^2 (U, S|_U \tensor \CC^r)
\]
be the spin Dirac operator on $U$ twisted with the trivial bundle $E|_U \cong U \times \CC^r$  endowed with the metric Lipschitz connection $\dd+\omega$.

Let $\Omega_0=\dd\omega_0+\omega_0\wedge\omega_0\in\Omega^2(V,M_r(\CC))$ be the smooth curvature form of the connection $\dd+\omega_0$ and set
\[
   \Omega := f^{\ast} \Omega_0 \in L^\infty\Omega^2(U,M_r(\CC)). 
\]
Let $\mathcal R\colon L^\infty(U, S|_U \tensor\CC^r)\to L^\infty(U, S|_U \tensor \CC^r)$ be defined by  
\begin{equation*}\label{E:R endomorphism}
    \mathcal R(\sigma\tensor\eta)\defeq \frac{1}{2}\sum_{i ,j =1}^n\bigl(e_i \cdot e_j \cdot\sigma\bigr)\tensor\bigl(\Omega_{e^g_i  , e^g_j } \, \eta\bigr)
\end{equation*}
on simple vectors $\sigma\tensor\eta \in S|_U \otimes \CC^r$.
Let $\hat \nabla$ be the tensor product connection $\nabla^S|_U \tensor 1+1\tensor (\dd+\omega)$ on $S|_U \times \CC^r$. 
It remains to show that 
\begin{equation}\label{E:local Lichnerowicz}
    \left(\hat \Dirac \psi_1,\hat \Dirac \psi_2\right)_g=\left(\hat \nabla \psi_1,\hat \nabla \psi_2\right)_g+\bigl\llangle\scal_g,\langle \psi_1, \psi_2\rangle\bigr\rrangle+\left(\mathcal R \, \psi_1, \psi_2\right)_g. 
\end{equation}

By Lemma \ref{L:C^0-H^1 approx} there is sequence of smooth functions $f_\nu  \colon U \to V$ such that, with respect to the chosen local coordinates on $U$ and $V$, 
\begin{equation} \label{annaehern} 
 \lim_{\nu \to \infty}  \bigl(  \norm{f_\nu -f}_{L^\infty(K) } +\norm{f_\nu -f}_{H^1(K) }\bigr)  = 0  , \qquad \max_{\nu} \norm{\dd f_\nu}_{L^{\infty}(K)}  < \infty . 
\end{equation}

Set  $\omega_\nu := f^\ast_\nu\omega_0 \in\Omega^1(U,M_r(\CC))$ and let 
\begin{equation*}
    \hat \Dirac_\nu\colon C_c^{\infty}(U, S|_U \tensor\CC^r)\longrightarrow C_c^\infty(U, S|_U \tensor\CC^r)
\end{equation*}
be the formally self-adjoint spin Dirac operator on $U$ twisted with the trivial bundle $E|_U \cong U \times \CC^r$ endowed with the smooth metric connection $\dd+\omega_\nu$.

Set $\Omega_\nu := f^{\ast} \Omega_0 \in \Omega^2(U, M_n(\CC))$ and let $\mathcal R_\nu \colon C^\infty(U, S|_U \tensor  \CC^r)\to C^\infty(U,\ S|_U \tensor \CC^r )$ be defined on simple tensors $\sigma \otimes \eta \in S|_U \otimes \CC^r$ by 
\begin{equation*}\label{E:R_k endomorphism}
    \mathcal R_\nu (\sigma\tensor\eta)\defeq \frac{1}{2}\sum_{i,j=1}^n\bigl(e_i\cdot e_j\cdot\sigma\bigr)\tensor \big( (\Omega_\nu)_{ e^g_i , e^g_j }\eta \big). 
\end{equation*}
Denote by  $\hat\nabla_\nu$ the smooth tensor product connection $\nabla^S|_U \tensor 1+1\tensor (\dd+\omega_\nu)$.

Using the classical Schr\"odinger-Lichnerowicz formula~\cite[Theorem~II.8.17]{LM89} and Remark \ref{translate}, we get for all $\nu$ that
\begin{equation}\label{E:approx integral Lichnerowicz}
    \left(\hat \Dirac_\nu \psi_1,\hat \Dirac_\nu \psi_2\right)_g =\left(\hat \nabla_\nu \psi_1,\hat \nabla_\nu \psi_2\right)_g+\bigl\llangle\scal_g,\langle \psi_1, \psi_2\rangle \bigr\rrangle+\left(\mathcal R_\nu\psi_1, \psi_2\right)_g  .
\end{equation}

In order to establish Equation \eqref{E:local Lichnerowicz}, it hence remains to show that
\begin{align}
 \label{toprove1}  \lim_{\nu \to \infty} \norm{\hat \nabla  \psi_\kappa-\hat \nabla_\nu \psi_\kappa}_{L^2(K)}   & = 0 , \quad \kappa = 1, 2, \\ 
 \label{toprove2} \lim_{\nu \to \infty}  \norm{ \mathcal  R \, \psi_1 - \mathcal R_\nu \, \psi_1}_{L^2(K)}   & =  0 . 
\end{align} 

Since $\hat \nabla -\hat\nabla_\nu=1\tensor\bigl( \omega- \omega_\nu \bigr)$ we have
\begin{align*}
    \norm{\hat \nabla \psi_\kappa-\hat \nabla_\nu \psi_\kappa}_{L^2(K)} \leq \norm{\psi_\kappa}_{L^\infty(K)}  \norm{\omega -\omega_\nu}_{L^2(K)} .
\end{align*}
Write $ \omega_0=\sum_{\alpha=1}^\ell \dd y^\alpha \tensor \Gamma_\alpha$  with $\Gamma_\alpha \in C^{\infty} ( V,  M_r(\CC))$.
With $f=(f^1,\ldots, f^\ell)$ and $f_\nu=(f^1_\nu,\ldots, f^\ell_\nu)$ we obtain 
\[
    \omega= \sum_{\alpha=1}^{\ell} \sum_{i=1}^n\left(\partial_i f^\alpha \right)\dd x^i \tensor \left(\Gamma_\alpha \circ f\right), \qquad \omega_\nu = \sum_{\alpha = 1}^\ell \sum_{i=1}^n \bigl(\partial_if^\alpha_\nu \bigr)\dd x^i\tensor \left(\Gamma_\alpha \circ f_\nu\right).
\]
Setting $\Gamma = (\Gamma_1, \ldots, \Gamma_\ell)$ we hence obtain, a.e.{} over $K$, that 
\[
    | \omega-\omega_\nu|    \leq  |\dd f-\dd f_\nu|  \cdot   \norm{\Gamma \circ f }_{L^\infty(K)}  +|\dd f_\nu| \cdot \norm{\Gamma \circ f  - \Gamma \circ f_\nu}_{L^\infty(K)} .
\]
By the triangle inequality for the $L^2$-norm this implies 
\[
    \norm{\omega-\omega_\nu}_{L^2(K)}   \leq  \norm{\Gamma \circ f}_{L^\infty(K)}  \norm{\dd f-\dd f_\nu}_{L^2(K)} +  \norm{ \Gamma \circ f - \Gamma \circ f_\nu}_{L^\infty(K)}  \norm{\dd f_\nu}_{L^2(K)} .
\]
With \eqref{annaehern}  the last expression  tends to $0$ as $\nu\to\infty$, proving \eqref{toprove1}. 

For \eqref{toprove2}, we write
\[
    \Omega_0=\sum_{1 \leq \alpha <\beta \leq \ell} ( \dd y^\alpha \wedge\dd y^\beta ) \tensor B_{\alpha,\beta}, \quad B_{\alpha,\beta} \in C^{\infty}(V, M_r(\CC)) , 
\]
and obtain
\begin{align*} 
    \Omega & =\sum_{\alpha<\beta}\sum_{i<j}\Bigl[ \bigl(\partial_i f^\alpha\bigr)\bigl(\partial_j f^\beta\bigr)-\bigl(\partial_j f^\alpha \bigr)\bigl(\partial_i f^\beta\bigr)\Bigr]  \dd x^i \wedge\dd x^j  \tensor \bigl(B_{\alpha,\beta }\circ f\bigr), \\
  \Omega_\nu & =\sum_{\alpha<\beta}\sum_{i<j}\left[\bigl(\partial_i f^\alpha_\nu \bigr)\bigl(\partial_j f^\beta_\nu \bigr)-\bigl(\partial_j f^\alpha_\nu \bigr)\bigl(\partial_i f^\beta_\nu \bigr)\right]  \dd x^i \wedge\dd x^j  \tensor \bigl(B_{\alpha,\beta}\circ f_\nu\bigr).
\end{align*} 
Therefore we get 
\[
    ( \mathcal R - \mathcal R_\nu) ( \sigma \tensor \eta)   \defeq \frac{1}{2}\sum_{i,j=1}^n\bigl(e_i \cdot e_j \cdot\sigma\bigr)\tensor\bigl(\Delta^{(\nu)}_{e^g_i, e^g_j }\eta  + \Lambda^{(\nu)}_{e^g_i, e^g_j }\eta\bigr)
\]
where
\begin{align*} 
    \Delta^{(\nu)} & \defeq \sum_{\alpha<\beta}\sum_{i<j}  \left[\bigl(\partial_i f^\alpha_\nu \bigr)\bigl(\partial_j f^\beta_\nu \bigr)-\bigl(\partial_j f^\alpha_\nu \bigr)\bigl(\partial_i f^\beta_\nu \bigr)\right] \dd x^i\wedge\dd x^j \tensor \Bigl[\bigl(B_{\alpha,\beta}\circ f\bigr)-\bigl(B_{\alpha,\beta }\circ f_\nu \bigr)\Bigr], \\
    \Lambda^{(\nu)} & \defeq \sum_{\alpha<\beta}\sum_{i<j }Z_{i,j}^{\alpha,\beta} \, \dd x^i\wedge\dd x^j  \tensor \bigl(B_{\alpha,\beta}\circ f\bigr), \\
    Z_{i ,j}^{\alpha,\beta} & \defeq \big( \mathop{\partial_i f^\alpha} \mathop{\partial_j f^\beta}-\mathop{\partial_j f^\alpha}\mathop{\partial_i f^\beta}\big) -\big( \mathop{\partial_i f^\alpha_\nu }\mathop{\partial_j f^\beta_\nu }-\mathop{\partial_j f^\alpha_\nu }\mathop{\partial_i f^\beta_\nu }\big) .
\end{align*} 
It remains to show that
\begin{equation}\label{E:2 limits}
 \lim_{\nu \to \infty}    \norm{ \Delta^{(\nu)}}_{L^2(K)}     =   0 , \qquad \lim_{\nu \to \infty} \norm{  \Lambda^{(\nu)} }_{L^2(K)}  =  0. 
\end{equation}

Since $\big|  \bigl(\partial_i f^\alpha_\nu \bigr)\bigl(\partial_j f^\beta_\nu \bigr)-\bigl(\partial_j f^\alpha_\nu \bigr)\bigl(\partial_i f^\beta_\nu \bigr)\big| \leq 2 | \dd f_\nu|^2$ a.e., there exists a constant $C$, not depending on $\nu$, such that, with $B:= ( B_{\alpha,  \beta})_{1 \leq \alpha,\beta \leq \ell}$, we obtain
\begin{equation*}
    \norm{\Delta^{(\nu)} }_{L^2(K)}  \leq C \norm{\dd f_\nu}^2_{L^\infty(K) } \norm{(B \circ f)-(B \circ f_\nu)}_{L^\infty(K)} .
\end{equation*}
From~\eqref{annaehern}  it follows that the last expression  tends to $0$ as $\nu\to\infty$, proving the first part of \eqref{E:2 limits}. 

For the second part in~\eqref{E:2 limits} we write
\[
  Z_{i,j }^{\alpha,\beta} = \mathop{\partial_i f^\alpha}  \left(\mathop{\partial_j f^\beta}-\mathop{\partial_j f^\beta_\nu }\right)-\mathop{\partial_j f^\alpha}\left(\mathop{\partial_i f^\beta}-\mathop{\partial_i f^\beta_\nu }\right) 
  +\left(\mathop{\partial_i f^\alpha}-\mathop{\partial_i f^\alpha_\nu }\right)\mathop{\partial_j f^\beta_\nu }-\left(\mathop{\partial_j f^\alpha}-\mathop{\partial_j f^\alpha_\nu }\right)\mathop{\partial_i f^\beta_\nu} 
\]
from which
\begin{equation*}
    \big| Z^{\alpha,\beta}_{i,j}\big| \leq 2\bigl( | \dd f | +  | \dd f_\nu | \bigr)  \cdot |\dd f -\dd f_\nu | \quad {\rm {a.e.}} 
\end{equation*}
We deduce that there exists a constant $C$, not depending on $\nu$, such that
\begin{equation*}
    \norm{ \Lambda^{(\nu)} }_{L^2(K)}  \leq C \bigl(\norm{\dd f}_{L^\infty(K)} +\norm{\dd f_\nu}_{L^\infty(K)}  \bigr)\norm{\dd f-\dd f_\nu}_{L^2(K)}  \norm{B\circ f}_{L^{\infty}(K) }    .
\end{equation*}
Again using~\eqref{annaehern}  it follows that the last expression  tends to $0$ as $\nu\to\infty$ so that the second equation in \eqref{E:2 limits} holds as well. 

Hence the proof of Theorem \ref{T:integral Lichnerowicz} is complete. 
\end{proof} 

In the remainder of this section we apply the Schr\"odinger-Lichnerowicz formula to harmonic spinor fields under lower bounds of $\scal_g$ and $\mathcal R^E$. 
From now on let $M$ be connected. 

\begin{dfn} \label{lowerbounds}
For a function $\vartheta \in L^\infty(M)$, we denote by 
\[
   \mathcal I_\vartheta \colon L^1(M) \to \R , \quad u \mapsto \int_M \vartheta  u \, d\mu_g,
\]
the associated regular distribution.
\begin{enumerate}[\myicon] 
\item We say that $\scal_g \geq \vartheta$ {\em in the distributional sense}  if for all $u\in C_c^{\infty}(M)$, $u \geq 0$,  we have
\[
    \llangle\scal_g , u \rrangle \geq \mathcal I_\vartheta(u) . 
\]
\item We say that $\mathcal R_g^E \geq \vartheta$ \emph{fiberwise in the distributional sense} if for all $\psi \in \Lip(M, S \otimes E)$ we get 
  \[
    \left(\mathcal R_g^E \psi , \psi\right)_g  \geq \mathcal I _\vartheta( |\psi|^2 )  . 
  \]

\end{enumerate} 
\end{dfn}

\begin{prop}\label{T:kerP}
Suppose there exists $\vartheta \in L^\infty(M)$ with 
\begin{align}
    \label{E:vanishing2}   \frac{1}{4} \scal_g & \geq \vartheta ,\\
   \label{E:vanishing1}  \mathcal R_g^E & \geq - \vartheta .
\end{align}
Then the following assertions hold. 
\begin{enumerate}[(a)] 
\item \label{partA} For each $\psi \in \Ker ( \Dirac_{g,E})$ the norm  $|\psi| \in H^1(M)$ is constant a.e.
\item \label{partB} If $\Ker ( \Dirac_{g,E}) \neq 0$, then   $\frac{1}{4} \scal_g =\mathcal I _\vartheta$. 
\end{enumerate} 
\end{prop}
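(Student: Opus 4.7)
The plan is to evaluate the integral Schr\"odinger-Lichnerowicz identity of Theorem~\ref{T:integral Lichnerowicz} at $\psi_1=\psi_2=\psi\in\Ker\Dirac_{g,E}$ and exploit the sign hypotheses \eqref{E:vanishing2} and \eqref{E:vanishing1} to force the covariant derivative term to vanish. Since $\psi\in H^1(M,S\tensor E)$ by Proposition~\ref{domain}, the identity reads
\begin{equation*}
    0 \;=\; \|\nabla \psi\|_g^2 \;+\; \tfrac14 \bigl\llangle \scal_g, |\psi|^2\bigr\rrangle \;+\; \bigl(\mathcal R_g^E\psi,\psi\bigr)_g.
\end{equation*}

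The next step is to justify that both $\tfrac14\llangle\scal_g,|\psi|^2\rrangle\ge \mathcal I_\vartheta(|\psi|^2)$ and $(\mathcal R_g^E\psi,\psi)_g\ge -\mathcal I_\vartheta(|\psi|^2)$ continue to hold for the non-smooth pairing $|\psi|^2$, which a priori only lies in $W^{1,1}(M)$. I would first approximate $\psi$ in the $H^1$-topology by Lipschitz sections $\psi_k$; for each $k$ the non-negative Lipschitz function $|\psi_k|^2$ can be mollified by non-negative smooth kernels, and the continuity of the functional $u\mapsto\int_M(-V^k\partial_k u+Fu)\,d\mu_g$ on Lipschitz functions allows the distributional inequality $\llangle\scal_g,u\rrangle\ge \mathcal I_\vartheta(u)$ to extend from smooth non-negative $u$ to $u=|\psi_k|^2$. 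Passing $k\to\infty$ using the continuity of $\mathscr S_g$ on $H^1\times H^1$ from Proposition~\ref{cor:scal_on_sections} together with the $L^1$-convergence $|\psi_k|^2\to|\psi|^2$ (which follows from the Sobolev embedding of $H^1$) yields the desired scalar curvature inequality; an analogous approximation handles the fiberwise bound on $\mathcal R_g^E$. Consequently, the sum of three non-negative quantities
\begin{equation*}
     \|\nabla\psi\|_g^2,\qquad \tfrac14\llangle\scal_g,|\psi|^2\rrangle-\mathcal I_\vartheta(|\psi|^2),\qquad (\mathcal R_g^E\psi,\psi)_g+\mathcal I_\vartheta(|\psi|^2)
\end{equation*}
vanishes, so each must be zero; in particular $\nabla^{S\otimes E}\psi=0$ almost everywhere.

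Since the smooth connection $\nabla^{E_0}$ is metric, so are its pullback $\nabla^E=f^\ast\nabla^{E_0}$ and the tensor product connection in~\eqref{prod_conn}. Therefore $d|\psi|^2 = 2\,\Re\langle\nabla\psi,\psi\rangle = 0$ in $L^1(M,T^\ast M)$, which forces $|\psi|^2$, and hence $|\psi|$, to be constant almost everywhere on the connected manifold $M$; this is part~(a). For part~(b), take a non-zero $\psi\in\Ker\Dirac_{g,E}$, so $|\psi|^2\equiv c$ for some $c>0$ by part~(a). The equality $\tfrac14\llangle\scal_g,|\psi|^2\rrangle=\mathcal I_\vartheta(|\psi|^2)$ extracted above specialises to $\tfrac{c}{4}\llangle\scal_g,1\rrangle=c\int_M\vartheta\,d\mu_g$, so the non-negative distribution $T:=\tfrac14\scal_g-\mathcal I_\vartheta$ on the closed manifold $M$ satisfies $\langle T,1\rangle=0$. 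By Schwartz's theorem, every non-negative distribution is a non-negative Radon measure, and a non-negative measure of total mass zero vanishes identically; hence $T=0$, which is part~(b).

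The main obstacle is the justification of the scalar curvature inequality against the low-regularity test quantity $|\psi|^2$, and this is precisely the reason the $H^1\!\times\! H^1$-continuous extension of $\mathscr S_g$ was built in Proposition~\ref{cor:scal_on_sections}. Once that extension is invoked in conjunction with non-negative mollification of Lipschitz sections, the rest of the argument is a standard rigidity deduction followed by the Schwartz--Riesz representation of non-negative distributions.
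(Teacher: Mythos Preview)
Your proof is correct. Part~(a) follows the paper's argument essentially verbatim; you are simply more explicit than the paper about the approximation needed to justify the distributional inequalities against the $H^1$-section $\psi$, which the paper glosses over by appealing implicitly to Proposition~\ref{cor:scal_on_sections}.

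For part~(b), however, you take a genuinely different route. Having already established in the course of proving~(a) that $\tfrac14\llangle\scal_g,|\psi|^2\rrangle-\mathcal I_\vartheta(|\psi|^2)=0$ and that $|\psi|^2\equiv c>0$, you deduce that the non-negative distribution $T=\tfrac14\scal_g-\mathcal I_\vartheta$ satisfies $\langle T,1\rangle=0$ and conclude $T=0$ via the Schwartz--Riesz representation of non-negative distributions as Radon measures. The paper instead argues by contradiction directly at the level of test functions: assuming a smooth $u\colon M\to[0,1]$ with $\tfrac14\llangle\scal_g,u\rrangle>\mathcal I_\vartheta(u)$, it writes the constant $|\psi|^2=Cu+C(1-u)$, re-inserts this decomposition into the Lichnerowicz identity, and obtains a strictly positive lower bound on $\|\Dirac_{g,E}\psi\|^2=0$. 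Your approach is cleaner in that it does not revisit the Lichnerowicz formula and packages the conclusion into a standard measure-theoretic fact; the paper's approach is entirely self-contained and stays within the distributional framework already set up, at the cost of a slightly more ad hoc contradiction.
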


\begin{proof}
Let $\psi \in \Ker\left(\Dirac_{g,E}\right) \subset H^1(M ,S \otimes E)$.
From Theorem \ref{T:integral Lichnerowicz} and our assumptions we obtain, using $L^2$-norms with respect to the metric $g$ on $M$, 
\[
    0 = \|\nabla_g \psi \|^2_{L^2}+ \frac{1}{4} \bigl \llangle \scal_g ,| \psi|^2  \bigr\rrangle+\bigl(\mathcal R^E \psi, \psi\bigr)_g \geq  \| \nabla_g \psi\|_{L^2}^2+ \mathcal I _\vartheta(|\psi|^2) + \mathcal I _{-\vartheta} ( |\psi|^2)   \geq \norm{\nabla_g \psi }_{L^2}^2.
\]
This implies  $\norm{\nabla_g \psi}_{L^2}=0$ and hence $\nabla_g \psi=0$ a.e.
Observe that  $|\psi|^2  \in W^{1,1}(M)$.
Since $\nabla_g$ is a metric connection, we furthermore have
\[
    \dd|\psi |^2=2\left<\nabla_g \psi, \psi\right>_{S \otimes E}=0 \quad \rm{a.e.}
\]
and hence $|\psi|^2 \in W^{1,1}(M)$ with $\dd |\psi|^2=0$ a.e. 
Since $M$ is connected, we deduce that $|\psi|^2$ is constant, finishing the proof of \eqref{partA}.

For \eqref{partB} it remains to show that for all smooth  $u\colon M\to[0,1]$ we get $\frac{1}{4} \left\llangle\scal_g,u\right\rrangle \leq \mathcal I _{ \vartheta}(u)$. 
For a contradiction, assume that there exists $\eps > 0$ and a smooth $u\colon M\to[0,1]$ with 
\begin{equation*}\label{eq:scal_pos}
\frac{1}{4} \left\llangle\scal_g,u\right\rrangle= \mathcal I _{ \vartheta}(u) + \eps   .
\end{equation*}
Let  $\psi \in \Ker\bigl(\Dirac_{g,E}\bigr)$ be a non-zero $H^1$-section. 
By  \eqref{partA} there exists a constant $C > 0$ with $|\psi|^2\equiv C =C u+C(1-u)$ a.e. 
Both $u$ and $1-u$ are non-negative and hence
\begin{align*} 
    0 &= (\Dirac_{g,E} \psi,\Dirac_{g,E}\psi) \\
     &\stackrel{\rm{Thm.} \eqref{eq:integral_Lichnerowicz}}{=}  \underbrace{\| \nabla_g\psi\|_{L^2}^2}_{\ge 0} + \underbrace{\left\llangle \mathcal R^E_g \psi, \psi \right\rrangle +\mathcal I _\vartheta(|\psi|^2)}_{\ge 0 \text{ by  \eqref{E:vanishing1}  }} +  \frac{1}{4}  \left\llangle \scal_g, |\psi|^2 \right\rrangle -\mathcal I _\vartheta(|\psi|^2) \\
    &\geq C\bigl(\underbrace{  \frac{1}{4} \left\llangle\scal_g ,u\right\rrangle -\mathcal I _\vartheta(u)}_{=\eps} + \underbrace{ \frac{1}{4}  \left\llangle\scal_g,1-u\right\rrangle  -\mathcal I _\vartheta (1-u)  }_{\ge 0\text{ by \eqref{E:vanishing2} }} \bigr) \geq C \eps>0,
  \end{align*} 
a contradiction.
\end{proof}

\section{Proof of Theorem \ref{T:main_even} and \ref{theo:disk}}\label{sec:proofs}

\begin{proof}[Proof of Theorem \ref{T:main_even}]
Pick a smooth Riemannian metric  $\gamma$ on $M$ and  let  $S \to M$ be the smooth spinor bundle associated to $\gamma$, some spin structure on $(M, \gamma) $  and the irreducible complex $\cl_n$-module $W$ as in Section \ref{S:Lipschitz bundles}.
Furthermore, let $\Sigma \to \S^n$ be the smooth spinor bundle associated to the standard round  metric $g_0$ on $\S^n$, the unique spin structure on $(\S^n, g_0) $ and the $\cl_n$-module $W$. 
Both spinor bundles $S$ and $\Sigma$  are equipped with the respective metric spinor connections. 
Since $n$ is even we have the $\ZZ/2$-grading $\Sigma=\Sigma^+\oplus\Sigma^-$. 
By pulling back $\bigl(\Sigma,\nabla^{\Sigma}\bigr)$ along $f$ we obtain the Hermitian Lipschitz bundle $ E \to M$ with induced  metric  connection $\nabla^E$.
Let 
\[
     \Dirac_{g,E} \colon H^1 \left(M, S \otimes E\right) \to L^2\left(M, S \otimes E\right)
\]
be the  self-adjoint spin Dirac operator on $M$ for the twist bundle  $\bigl(E,\nabla^E\bigr)$.
Since $E = E^+ \oplus E^-$ is also $\ZZ/2$-graded, we obtain an induced operator
\[
    D^{+} _{g,E} \colon H^1\left(M, (S^+ \otimes E^+) \oplus (S^- \otimes E^-)\right) \to L^2\left( M , (S^- \otimes E^+) \oplus (S^+ \otimes E^-) \right) . 
\]
According to Theorem~\ref{thm:index}, the index is equal to 
\begin{equation} \label{index} 
    \ind\bigl(\Dirac_{g,E}^{+}\bigr) = \left\langle \hat{\mathsf{A}} (M) \cup f^* \left(  \ch(\Sigma^+) - \ch(\Sigma^-)\right) , [M] \right\rangle . 
\end{equation} 
The Chern character difference is computed in  \cite[ Prop.~III.11.24]{LM89}. 
This calculation shows 
\[
    \ch(\Sigma^+) - \ch(\Sigma^-) = (-1)^{\tfrac{n}{2}} \, e(T \S^n), 
 \]
 where $e(T\S^n) \in H^{n}(\S^n ; \mathbb{Q})$ is the Euler class of the tangent bundle $T\S^n \to \S^n$.
 Plugging this result into Equation \eqref{index} and using that the $H^0(M ; \mathbb{Q})$-component of $\hat{\mathsf{A}}(M)$ is equal to $1$, we obtain 
 \begin{align*} 
 (-1)^{\tfrac{n}{2}} \cdot   \ind\bigl(\Dirac_{g,E}^{+}\bigr) & = \langle \hat{\mathsf{ A}}(M) \cup f^*\left(e(T\S^n)\right) , [ M ] \rangle  \\ & =  \langle f^* \left(e(T\S^n)\right) , [M] \rangle \\ & =  \langle e(T\S^n) , f_*([M]) \rangle \\ & =  \deg (f) \cdot \underbrace{\langle e(T\S^n) , [\S^n] \rangle}_{=2}  . 
 \end{align*} 
Since $\deg(f) \neq 0$ by assumption, the last expression is non-zero, and using that  $\Dirac_{g,E}$ is self-adjoint, we conclude that there exists $0 \neq \psi  \in \ker \Dirac_{g,E}$.

We now analyze the term $\mathcal R_g^E$ in the Schr\"odinger-Lichnerowicz formula in Theorem \ref{T:integral Lichnerowicz}. 
For the next proposition, recall the shorthand $v^g = b_g(v)$ from Notation \ref{kurz}. 

\begin{prop} \label{wonderfulestimates}
Let $x \in M$ be a point where $f$ is differentiable.  
Assume that either $|d_x f| \leq 1$, or $n \geq 4$ and $|\Lambda^2 d_x f| \leq 1$.

Then for all $\omega \in (S \otimes E)_x$ we have 
\[
    \langle   \mathcal R_g^E \omega, \omega \rangle \geq - \frac{1}{4} n(n-1) |\omega|^2 . 
\]
Furthermore, for $\omega\neq 0$ equality holds if and only if $d_x f \colon (T_x M, g_x) \to T_{f(x)} \S^n$ is an isometry and 
\[
     \big( v \cdot w \otimes  d_x f(v^g)  \cdot d_x f(w^g)   \big) \cdot \omega = \omega 
\]
for all $\gamma$-orthonormal vectors $v, w \in T_x M$. 
\end{prop}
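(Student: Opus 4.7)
The plan is to reduce the pointwise estimate to a linear-algebra computation in a singular value decomposition (SVD) basis for $d_xf$. First I would choose a $\gamma$-orthonormal basis $(e_1,\ldots,e_n)$ of $T_xM$ and a $g_0$-orthonormal basis $(\epsilon_1,\ldots,\epsilon_n)$ of $T_{f(x)}\S^n$ such that $\bar e_i = d_xf(e_i^g) = \mu_i\epsilon_i$ with $\mu_1,\ldots,\mu_n \geq 0$; this is possible because $b_g$ sends $\gamma$-orthonormal frames to $g$-orthonormal frames (Lemma \ref{regb}), so any SVD of $d_xf \colon (T_xM,g_x) \to (T_{f(x)}\S^n,g_0)$ pulls back. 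Under $|d_xf|\leq 1$ every $\mu_i \leq 1$; under $|\Lambda^2 d_xf|\leq 1$ the product of the two largest singular values is at most $1$; in either case $\mu_i\mu_j \leq 1$ for all $i\neq j$.

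Next I would express $\mathcal R_g^E$ in this basis. On the round sphere, the standard formula for the spinor curvature \cite[Thm.~II.4.15]{LM89} together with $R^{T\S^n}_{X,Y}Z = \langle Y,Z\rangle X - \langle X,Z\rangle Y$ gives $R^{\Sigma}_{\epsilon_i,\epsilon_j} = -\tfrac{1}{2}\,\epsilon_i\cdot\epsilon_j$ for $i \neq j$; pulling back along $f$, $R^E_{e_i^g,e_j^g} = -\tfrac{1}{2}\mu_i\mu_j\,\epsilon_i\cdot\epsilon_j$. Substituting into \eqref{eq:R^E_endomorphism} and symmetrizing over $i\neq j$ collapses the double sum to
\[
   \mathcal R_g^E = -\tfrac{1}{2}\sum_{1\leq i<j\leq n}\mu_i\mu_j\, T_{ij}, \qquad T_{ij} \defeq e_i\cdot e_j \otimes \epsilon_i\cdot\epsilon_j .
\]

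Each $T_{ij}$ is a tensor product of two skew-adjoint operators of square $-1$, hence is self-adjoint with $T_{ij}^2 = \id$ and spectrum $\{\pm 1\}$; in particular $\langle T_{ij}\omega,\omega\rangle \leq |\omega|^2$. Combined with $\mu_i\mu_j\leq 1$ and the fact that there are $\binom{n}{2} = \tfrac{n(n-1)}{2}$ pairs $i<j$, this yields the desired bound
\[
   \langle \mathcal R_g^E\omega,\omega\rangle = -\tfrac{1}{2}\sum_{i<j}\mu_i\mu_j\,\langle T_{ij}\omega,\omega\rangle \geq -\tfrac{n(n-1)}{4}|\omega|^2 .
\]

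For $\omega\neq 0$, equality forces $\mu_i\mu_j = 1$ for every $i<j$ (hence all $\mu_i = 1$, so $d_xf$ is an isometry) together with $T_{ij}\omega = \omega$ for every $i<j$. In the isometric case $\bar e_i = \epsilon_i$, so $T_{ij}$ is precisely $v\cdot w \otimes \bar v_x\cdot\bar w_x$ for $v = e_i$, $w = e_j$. To upgrade from the fixed SVD basis to every $\gamma$-orthonormal pair $(v,w)$, observe that once $d_xf$ is an isometry \emph{any} $\gamma$-orthonormal basis $(e'_1,\ldots,e'_n)$ is automatically an SVD basis (the tuple $\epsilon'_i := d_xf((e'_i)^g)$ is $g_0$-orthonormal), so the same argument applies and yields the identity for every such pair. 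The main bookkeeping obstacle is the constant shuttling between $\gamma$ (used for the frame $e_i$ and the Clifford factor $e_i\cdot e_j$ on $S$) and $g$ (entering through $e_i^g = b_g e_i$ and $\bar v_x = d_xf(v^g)$), mediated throughout by the isomorphism $b_g$.
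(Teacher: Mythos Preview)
Your proof is correct and follows essentially the same approach as the paper: both use a singular value decomposition of $d_xf\circ b_g$, compute $\mathcal R_g^E$ via the spinor curvature formula on $\S^n$, and bound each self-adjoint involution $e_i\cdot e_j\otimes \eps_i\cdot\eps_j$ by $1$ to obtain the inequality and its equality case. Your treatment is in fact slightly more explicit than the paper's, which asserts the equivalence of the SVD-basis conditions with the ``all $\gamma$-orthonormal pairs'' statement in one line, whereas you spell out that once $d_xf$ is an isometry every $\gamma$-orthonormal basis serves as an SVD basis and the argument can be rerun.
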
 

\begin{proof} 
By a singular value decomposition of $d_x f \circ b_g$, there exists a $\gamma$-orthonormal basis $(e_1, \ldots e_n)$ of $T_x M$, a $g_0$-orthornomal basis $(\eps_1, \ldots, \eps_n)$ of $T_{f(x)} \S^n$ and real numbers $\mu_i \geq 0$, $1 \leq i \leq n$, with $d_x f (e^g_i)  = \mu_i \cdot \eps_i$.

Since the curvature operator of $(\S^n,g_0) $ acts as the identity on $2$-forms, formula \cite[Equation (4.37) in Chapter II]{LM89} (also compare \cite[Lemma 4.3]{Lla98})  implies 
\[
    R^{E}_g \omega= \frac{1}{2} \sum_{i,j = 1}^n \mu_i \mu_j \, \big( e_i \cdot e_j  \otimes R^{\Sigma}_{\eps_i , \eps_j} \big)  \,  \omega  = \frac{1}{4} \sum_{i \neq j}  \mu_i \mu_j \, \big( e_i \cdot e_j \otimes \eps_j  \cdot \eps_i  \big) \cdot \omega . 
\]
Each  Clifford multiplication operator $e_i \cdot e_j \otimes \eps_j \cdot \eps_i \colon (S \otimes E)_x \to (S  \otimes E)_x$ is a self-adjoint involution, hence of norm $1$, so that 
\[
    \langle(  e_i \cdot e_j \otimes \eps_j \cdot \eps_i) \cdot \omega, \omega \rangle \geq - | \omega|^2 . 
\]
We conclude 
\[
    \langle  R^{E}_g \omega , \omega \rangle =  \frac{1}{4} \sum_{i \neq j}  \mu_i \mu_j \, \langle (e_i \cdot e_j \otimes \eps_j \cdot \eps_i)\cdot \omega, \omega \rangle \geq - \frac{1}{4} \sum_{i \neq j} \mu_i \mu_j | \omega|^2 \geq - \frac{1}{4} n(n-1) |\omega|^2 .  
\]
Furthermore, for $\omega \neq 0$ equality holds if and only if 
\begin{enumerate}[(a)]
   \item \label{erstens} $\mu_i = 1$ for all $1 \leq i \leq n$, that is, $d_x f \circ b_g \colon (T_x M , \gamma_x) \to T_{f(x)} \S^n$ is an isometry and 
   \item \label{zweitens} $( e_i \cdot e_j \otimes \eps_i \cdot \eps_j ) \cdot \omega = \omega$ for $1 \leq i \neq j \leq n$. 
\end{enumerate} 
For  \eqref{erstens} we use that either $|d_x f| \leq 1$ (hence $\mu_i \leq 1$ for $1 \leq i \leq n$), or $n \geq 4$ and $|\Lambda^2 d_x f| \leq 1$ (hence $\mu_i \mu_j \leq 1$ for all $1 \leq i \neq j \leq n$).

Since $b_g \colon (T_x M, \gamma_x) \to (T_x M, g_x)$ is an isometry, assertions \eqref{erstens} and \eqref{zweitens} are equivalent to the conditions stated in Proposition \ref{wonderfulestimates}. 
 \end{proof}

Since $f$ is differentiable almost everywhere, Proposition \ref{wonderfulestimates} implies 
\begin{equation*} 
  \mathcal R^E  \geq-\frac{1}{4}  n (n-1)
\end{equation*} 
fiberwise  in the distributional sense. 
Together with our assumption $\scal_g\geq n (n-1)$ in the distributional sense, Proposition \ref{T:kerP} with $\vartheta := \frac{1}{4} n(n-1)$  implies that there exists $C > 0$ with $|\psi| =  C$ a.e.{} and $\scal_g =  \mathcal{I}_{n (n -1)}$.

From Theorem \ref{eq:integral_Lichnerowicz} we hence obtain that
\begin{equation*}
    0=\|\Dirac_{g,E}\psi\|_{L^2}^2  \geq \frac{1}{4}   \left\llangle\scal_g,|\psi|^2\right\rrangle + \left(\mathcal R^E \psi,\psi \right)_g  = \frac{ C^2}{4}  n(n-1) \vol(M,g) + \left(\mathcal R^E \psi,\psi \right)_g . 
 \end{equation*}

Using  the equality statement of  Proposition \ref{wonderfulestimates},  this implies that at almost all points $x$ where $f$ is differentiable, the map  $d_x f \colon (T_x M, g_x) \to T_{f(x)} \S^n$ is an isometry and 
\begin{equation} \label{Clifford} 
     \big( v \cdot w \otimes  d_x f(v^g)  \cdot d_x f(w^g)   \big) \cdot  \psi(x) = \psi(x)  \textrm{ for all } \gamma\textrm{-orthonormal }  v, w \in T_x M. 
\end{equation} 
Let $\Mr \subset M$ be the subset of full measure of all  $x \in M$ where $f$ is differentiable and 
\begin{enumerate}[\myicon] 
\item $ d_x f \colon (T_x M, g_x) \to T_{f(x)} \S^n$ is an isometry, 
\item  $\big( v \cdot w \otimes  d_x f(v^g)  \cdot d_x f(w^g)   \big) \cdot  \psi(x) = \psi(x)$ for all $\gamma$-orthonormal  $v, w \in T_x M$,
\item  $|\psi(x)| = C > 0$, where $C$ is independent of $x$.  
\end{enumerate}

\smallskip 

\begin{prop}\label{prop:orientation}
  The differential $d_x f$ is either orientation preserving at almost all
  $x \in \Mr$ or orientation reversing at almost all $x \in \Mr$.
\end{prop}
\smallskip 

Together with the previous discussion, this implies,  possibly after reversing the orientation of $M$, that the differential $df$ is an orientation preserving isometry a.e.{}  on $M$ and the proof of Theorem \ref{T:main_even}  is completed by Theorem  \ref{essrigid}. 

For proving Proposition \ref{prop:orientation}, let 
\[
    M_{\pm} := \{ x \in \Mr \mid  \det ( d_x f)  = \pm 1 \} \subset \Mr 
\]
be the subset of $\Mr$ where $df$ is orientation preserving or orientation reversing, respectively. 

Consider the Clifford algebra bundles $\Cl(M) \to M$ for $(M,\gamma)$ and $\Cl(\S^n) \to \S^n$ for $(\S^n, g_0)$.  
The  {\em oriented volume elements} 
\[
    \vol_\gamma = e_1 \cdot  \ldots \cdot e_n \, , \qquad  \vol_{g_0} = \eps_1 \cdot \ldots \cdot \eps_n , 
\]
where $(e_1, \ldots, e_n)$ is a local smooth positively oriented $\gamma$-orthonormal frame of $TM$ and  $(\eps_1, \ldots, \eps_n)$ is a local smooth positively oriented $g_0$-orthonormal frame of $T\S^n$, define global smooth sections of $\Cl(M)$ and $\Cl(\S^n)$. 
Hence we obtain a self-adjoint Lipschitz involution of the Lipschitz bundle $S \otimes E \to M$ which over $x \in M$ acts by left Clifford multiplication with $\vol_\gamma(x) \otimes \vol_{g_0} (f(x))$. 
Let $S\otimes E = W^+\oplus W^-$ be the resulting orthogonal splitting into $\pm1$-eigenbundles and let  
\[
    \pi_+ \in \Lip ( M  , \End(S\otimes E))
\]
be the orthogonal projection onto $W^+$. 

Now take $x \in \Mr$ and a positively oriented $\gamma$-orthonormal basis  $(e_1, \ldots, e_n)$ of $T_x M$. 
Then $(d_x f( e^g_1) , \ldots, d_x f(e^g_n))$ is a $g_0$-orthonormal basis of $T_{f(x)} \S^n$ which is positively oriented, if and only if $d_x f \circ b_g \colon T_x M \to T_{f(x)}\S^n$ is orientation preserving. 
The last condition is equivalent to $x \in M_+$ as  the $\gamma$-self-adjoint map $b_g \colon T_x M \to T_x M$ (defined in Lemma \ref{regb}) is positive.

Applying \eqref{Clifford} iteratively for $(v,w) = (e_1, e_2), (e_3, e_4), \ldots, (e_{n-1} , e_n)$, using that $n$ is even, we get
\[
    \big( ( e_1 \cdot \ldots \cdot e_n)   \otimes  (d_x f( e^g_1) \cdot  \ldots \cdot d_x f(e^g_n)  ) \big) \cdot \psi(x) = \psi(x)  , 
\]
and together with the previous discussion, this shows 
\[
  \big( \vol_\gamma(x)  \otimes \vol_{g_0} (f(x))  \big) \cdot \psi (x) =  \pm \psi(x) \textrm{ if } x\in M_{\pm} . 
\]
Consequently,
  \begin{equation*}
\pi_+\psi(x)=  \begin{cases} \psi(x) & \textrm{ for all } x \in M_+, \\
                                                   0 & \textrm{ for all } x\in M_-.
\end{cases}
\end{equation*}
As $\psi \in W^{1,2}(M, S\otimes E)$ and $\pi_+$ is Lipschitz, we have $\abs{\pi_+\psi}\in W^{1,2}(M, \R)$, and this function  is identically $0$ on $M_-$ and constant with non-zero value $C$ on $M_+$. 
Because $M$ is connected and the map is of
  Sobolev regularity   $W^{1,2}$, either $M_-$ or $M_+$ has measure zero, which finishes the proof of Proposition \ref{prop:orientation}. 

\end{proof}

\begin{proof}[Proof of Theorem \ref{theo:disk}]
  Choose a smooth collar neighborhood of $\partial M \subset M$ and let $(\hat M, \hat g)$ be the smooth double of $(M,g)$ with reflected metric. 
  The metric $\hat g$ on $\hat M$ is Lipschitz and hence admissible. 
 Since the boundary has positive mean curvature, \cite[Proposition 5.1]{LL15} implies that  $\scal_{\hat g} \geq n(n-1)$ in the distributional sense.   
 
 We define $\hat f \colon \hat M\to \S^n$ to be equal to $f$ on the first copy of $M$ in $\hat M$ and to be equal to $\rho \circ f$ on the second copy of $M$, where $\rho$ was defined in Example \ref{folding}. 
 Since $\rho$ is $1$-Lipschitz and $f(\partial M) \subset \D^n_-$ by assumption,  the map $\hat f \colon \hat M \to \S^n$ is Lipschitz. 
 Furthermore, if $n \geq 4$, then $d\hat f$ is area-nonincreasing a.e., and if $n =2$, then $\hat f$ is $1$-Lipschitz. 

The map   $\hat f$ sends the second copy of $M$ in $\hat M$ to  the lower hemisphere $\D_-$, hence we get 
\[
   \deg \hat f = \deg \left(f\colon  (M, \partial M)\to (\S^n,\D^n_-) \right) \neq 0 . 
\]
Theorem \ref{T:main_even} now implies that $\hat f$ is a metric isometry.
We conclude, using smoothness of $g$, that $\im (f) = \D^n_+$ and  that $f \colon (M, g)  \to \D^n_+$ is a smooth Riemannian isometry. 
\end{proof} 

\bibliographystyle{abbrv}
\bibliography{biblio}

\end{document}